\documentclass{article}

\usepackage[utf8]{inputenc}
\usepackage[T1]{fontenc}
\usepackage[british]{babel}
\usepackage{amsmath}
\usepackage{amsthm}
\usepackage{amsfonts}
\usepackage{amssymb}
\usepackage{mathscinet}
\usepackage{bm}
\usepackage{makeidx}
\usepackage[pdftex]{graphicx}

\theoremstyle{plain}
\newtheorem{theorem}{Theorem}[section]
\newtheorem{lemma}{Lemma}[section]
\theoremstyle{definition}
\newtheorem{definition}{Definition}[section]
\newtheorem{example}{Example}[section]
\theoremstyle{remark}
\newtheorem{remark}{Remark}[section]

\DeclareMathOperator{\id}{id}
\DeclareMathOperator{\Hom}{Hom}
\DeclareMathOperator{\tr}{tr}

\DeclareMathOperator{\res}{res}
\DeclareMathOperator{\RE}{Re}
\DeclareMathOperator{\IM}{Im}
\DeclareMathOperator{\diverge}{div}
\DeclareMathOperator{\curl}{\mathbf{curl}}
\DeclareMathOperator{\Var}{\mathsf{Var}}

\newcommand{\pcoor}[1]{%
  \begingroup\lccode`~=`: \lowercase{\endgroup
  \edef~}{\mathbin{\mathchar\the\mathcode`:}\nobreak}%
  [
  \begingroup
  \mathcode`:=\string"8000
  #1%
  \endgroup
  ]
}

\makeindex

\begin{document}

\title{Tensor- and spinor-valued random fields with applications to continuum physics and cosmology}

\author{Anatoliy Malyarenko\thanks{Corresponding author. Division of Mathematics and Physics, Mälardalen University, Box 883, 721 23 Västerås, Sweden. E-mail: \texttt{anatoliy.malyarenko@mdh.se}.} \and Martin Ostoja-Starzewski\thanks{Department of Mechanical Science \& Engineering, also Institute for Condensed Matter Theory and Beckman Institute, University of Illinois at Urbana-Champaign, Urbana, IL, 61801-2906 USA. E-mail: \texttt{martinos@illinois.edu}.}}

\date{\today}

\maketitle

\begin{abstract}
In this paper, we review the history, current state-of-art, and physical applications of the spectral theory of two classes of random functions. One class consists of homogeneous and isotropic random fields defined on a Euclidean space and taking values in a real finite-di\-men\-si\-o\-nal linear space. In applications to continuum physics, such a field describes physical properties of a homogeneous and isotropic continuous medium in the situation, when a microstructure is attached to all medium points. The range of the field is the fixed point set of a symmetry class, where two compact Lie groups act by orthogonal representations. The material symmetry group of a homogeneous medium is the same at each point and acts trivially, while the group of physical symmetries may act nontrivially. In an isotropic random medium, the rank $1$ (resp. rank $2$) correlation tensors of the field transform under the action of the group of physical symmetries according to the above representation (resp. its tensor square), making the field isotropic.

Another class consists of isotropic random cross-sections of homogeneous vector bundles over a coset space of a compact Lie group. In applications to cosmology, the coset space models the sky sphere, while the random cross-section models a cosmic background. The Cosmological Principle ensures that the cross-section is isotropic.

For convenience of the reader, a necessary material from multilinear algebra, representation theory, and differential geometry is reviewed in Appendix.
\end{abstract}

\clearpage

\tableofcontents

\listoffigures

\section{Introduction}

Random functions of several variables appeared for the first time as
mathematical models for physical phenomena like turbulence, see
\cite{Keller1925,MR0001702,doi:10.1098/rspa.1935.0158,Karman1937b,Karman1937a,MR28160,MR26884,Karman1938,MR0042255}.
Since then, the theory developed further and a lot of new applications
appeared, see historical accounts in
\cite{MR2977490,MR2406667,MR2406668,MR2341287,MR2676223,MR697386,MR893393,MR915557}.

In this survey paper, we describe two particular directions in the theory of
random functions of several variables.
\begin{enumerate}
  \item The first one studies random fields defined on the space $\mathbb{R}^d$ and taking values in a real finite-dimensional space, say $U$. The case, when the elements of $U$ are tensors of a fixed rank over $\mathbb{R}^d$, turns out to be very interesting for applications in continuum physics.
  \item The second direction studies random cross-sections of vector, tensor and spinor bundles over manifolds. The case, when the base of the bundle is the sphere $S^2$, found applications in cosmology.
\end{enumerate}

It turns out that the two directions described above are linked to several parts of mathematics. In Section~\ref{sec:physical}, we give a physical motivation for introducing random fields and random cross-sections of vector and tensor bundles. Following a physical tradition, we place physical media into an affine space without a fixed origin, and later vectorise the above space. We explain the \emph{Curie principle}\index{Curie principle} that gives a relation between \emph{medium}\index{symmetry group!medium} and \emph{physical}\index{symmetry group!physical} symmetry groups of a physical medium. It is here where we give a rigorous mathematical definition of a homogeneous and isotropic random field, the main object for investigations in Sections~\ref{sec:Euclidean} and \ref{sec:continuum}. An explanation to the fact that the Cosmic Microwave Background can be modelled mathematically as an isotropic random cross-section of a bundle over the sky sphere, is given.

In Section~\ref{sec:history}, we give a short historical account of the topic and describe how the two directions mentioned above developed in time and how the links between the theory of random functions of several variables and several parts of deterministic mathematics have been established.

Section~\ref{sec:Euclidean} occupies a significant part of the paper. In applications to continuum physics, a random field model is applied to different physical phenomena like turbulence, linear strain, piezoelectricity, linear elasticity, etc. How many strain, piezoelectric, elasticity,\dots classes exist? In Subsection~\ref{sub:formulation}, we answer to this question, attach a homogeneous and isotropic random field to the orthogonal representation of the physical symmetry group that acts in the fixed point set of a symmetry class of the medium symmetry group, and formulate a general problem: how to calculate the general form of the one- and two-point correlation tensors of the introduced field? How to find its spectral expansion?

As usual, there are several ways to solve a given mathematical problem. In Subsection~\ref{sub:principles} we explain our approach. In short: we divide our solution into two parts; the first one is coordinate-free, the second one starts from choosing the most convenient basis in the fixed point set $U$ of a symmetry class and finishes by writing the two-point correlation tensor and the spectral expansion of the field in the chosen basis.

The material given in Subsection~\ref{sub:homogeneous} is well-known and given there for the sake of completeness. We describe a homogeneous random field $T(\mathbf{x})$ defined on the \emph{space domain}\index{space domain} $\mathbb{R}^d$ and taking values in a real finite-dimensional linear space $U$, in terms of a measure $F$ defined on the Borel $\sigma$-field $\mathfrak{B}(\hat{\mathbb{R}}^d)$ of the \emph{wavenumber domain}\index{wavenumber domain} $\hat{\mathbb{R}}^d$ and taking values in the cone of Hermitian nonnegative-definite linear operators on the complexification $cU$ of the space $U$. It remains to find all such measures $F$ that the corresponding random field is not only homogeneous, but also isotropic.

A preliminary answer to this question is given in Subsection~\ref{sub:preliminary}. We identify the closed subgroup $\tilde{G}$ of the group $\mathrm{O}(d)$, the subspace $\tilde{U}$ of the real linear space of Hermitian linear operators in $cU$, and the orthogonal representation $\theta^{\tilde{U}}$ of the group $\tilde{G}$ such that: a $U$-valued homogeneous random field $T(\mathbf{x})$ is isotropic if and only if the corresponding measure $F$ satisfies Equation~\eqref{eq:15} below. This statement is immediately formulated in the form of Equation~\eqref{eq:16} which describes more simple objects: an ordinary (not operator-valued) measure $\mu$, and a function $f$ defined on the wavenumber domain and taking values in a \emph{finite-dimensional convex compact set}. This link between the theory of homogeneous and isotropic random fields and the theory of finite-dimensional convex compacta was established by the authors.

In Subsection~\ref{sub:general}, we sketch a proof for Equation~\eqref{eq:18}, which gives the two-point correlation tensor of a homogeneous and isotropic random field in a coordinate-free form. We refer to the papers, when the above equation is transformed to a coordinate form.

Instead of reproducing the above form with a lot of indices, we consider several examples important for continuum physics. All examples follow the same $6$-steps scheme, described in the beginning of Subsection~\ref{sub:examples}. In different examples, the steps are described with different degree of going into details. We conclude in Subsection~\ref{sub:concluding}.

In Section~\ref{sec:continuum}, we describe two major applications of the theory developed in Section~\ref{sec:Euclidean}, to continuum physics. One has to do with imposing restrictions on dependent tensor fields such as displacement, velocity or stress. These restrictions are dictated by the field equations of continuum theories such as mechanics, conductivity, or electrostatics.

The second application relates to positive-definiteness property of tensor fields of constitutive properties in continuum mechanics. First and foremost, it appears in linear constitutive theories such as classical elasticity or electrical permeability, which are defined in terms of the free energy functionals. Secondly, positive-definiteness is crucial in formulating models of irreversible material behavior based on the dissipation functionals, such as exemplified by conductivity and viscous fluids.  Thus, if statistical continuum mechanics theories are built on stochastic functionals of free energy and/or dissipation function, tensor-valued random fields of rank $2$ and $4$ are essential.

Fluctuations of random fields of elasticity and viscosity provide statistical information about the extrema.  The minima correspond to the loss of positive-definiteness, which is a well-known possibility in elasticity, e.g. \cite{MR1318388}. On the other hand, and in light of the non-equilibrium statistical mechanics \cite{doi:10.1146/annurev-conmatphys-062910-140506}, there is also a possibility of violations of the Clausius--Duhem inequality when the continuum point is set up on length and/or time scales comparable to characteristic features (mean-free paths and/or collision times) of microstructure \cite{OstojaStarzewski2014,MR4203086}.  When the number of elements (atoms or grains) in this statistical volume element becomes sufficiently large, and when the observation time windows grow, the probability of spontaneous violations of the Second Law vanishes.

The exposition in Section~\ref{sec:sections} is motivated by the idea formulated in \cite{MR2928833}. On the one hand, in the current standard model of particle physics, both six quarks and six leptons are divided into three \emph{generations}:\index{generations of particles} pairs of particles that demonstrate a similar physical behaviour. For example, the first generation of leptons includes electron and electron neutrino, the second muon and muon neutrino, the third tau and tau neutrino.

On the other hand, modern quantum physics is based on the theory of complex unitary representations of topological groups. The latter come in three flavours: the representations of \emph{real}, \emph{complex}, and \emph{quaternionic} type. It is supposed that there exists a link between three generations of particles and three types of complex representations.

Following this idea, we consider in a \emph{unified} way homogeneous vector bundles $(E,B,\pi)$ whose fibers are (right) finite-dimensional linear spaces over three (skew) fields: the field $\mathbb{R}$ of reals, the field $\mathbb{C}$ of complex numbers, and the skew field $\mathbb{H}$ of quaternions. Theorem~\ref{th:section} is new and gives the spectral expansion of an isotropic random section of a homogeneous vector bundle with the above described fibres. The expansion is performed with respect to a \emph{special} basis in the Hilbert space of square-integrable cross-sections of the bundle $(E,B,\pi)$. Here, we explain how such a basis appears, using a simple homogeneous vector bundle.

Let $G=\mathrm{SO}(3)$ be the group of orthogonal $3\times 3$ matrices with unit determinant, and let $H=\mathrm{SO}(2)$ be the subgroup of $G$ that leave the point $(0,0,1)^{\top}\in\mathbb{R}^3$ fixed. It is well-known that the set $G/H$ of left cosets is identified with the sphere $B=S^2$. Put $E=B\times\mathbb{C}^1$ and $\pi(x,z)=x$ for all $x\in S^2$ and $z\in\mathbb{C}^1$. The Hilbert space $L^2(S^2)$ of square-integrable cross-sections of the bundle $(E,B,\pi)$ with respect to the Lebesgue measure $\mathrm{d}x$ consists of functions on the sphere. The unitary representation $(g\cdot f)(x)=f(g^{-1}x)$ for $g\in G$ and $f\in L^2(S^2)$ is the direct sum of the irreducible unitary representations $cU_{\ell}$ of the group $G$ over nonnegative integers $\ell$. The space $cU_{\ell}$ of dimension $2\ell+1$ is the famous space of harmonic polynomials of degree~$\ell$.

Let $s(x)$ be a centred second-order random cross-section of the line bundle $(E,B,\pi)$, and let $\{\,f_n(x)\colon n\geq 1\,\}$ be an arbitrary orthonormal basis of the space $L^2(S^2)$. The random variables
\[
a_n=\int_{S^2}s(x)\overline{f_n(x)}\,\mathrm{d}x
\]
are the Fourier coefficients of the random field $s(x)$. In general, they are correlated. However, if $s(x)$ is \emph{isotropic} and the basis can be divided into countably many subsets such that the $\ell$th one constitute a basis of the space $cU_{\ell}$, then the random variables $a_n$ are uncorrelated. Moreover, the variance of $a_n$ depends only on the subset to which $f_n(x)$ belongs. In particular, if one chooses the spherical harmonics $Y_{\ell,m}(x)$ as the basis, we recover the result of \cite{Obukhov1947}:
\[
s(x)=\sum_{\ell=0}^{\infty}\sum_{m=-\ell}^{\ell}a_{\ell m}Y_{\ell,m}(x).
\]

The general algorithm for choosing such a basis is described in Subsection~\ref{sub:construction}. We discuss it below.

Note that the random fields considered in Sections~\ref{sec:Euclidean} and \ref{sec:continuum} are random sections of \emph{trivial} tensor bundles over Euclidean spaces.

Section~\ref{sec:cosmology} is a collection of examples. We collect different spectral expansions of the Stokes parameters of the CMB using different orthonormal bases in the Hilbert spaces of square-integrable sections of various homogeneous vector bundles, where the fibers are linear spaces over various (skew) fields. See also similar calculations in \cite{MR3746005}.

Finally, Appendix~\ref{sec:language} is intended for the readers who do not specialise in the areas of mathematics linked to the theory of random fields. The material here is standard, but some results are new and some approaches require a short explanation.

In vast majority of mathematical and physical literature, three equivalent approaches to the definition of a tensor are most popular. An axiomatic approach using some universal property due to Bourbaki \cite{MR1727844} is probably the most elegant of them. A constructive approach through multidimensional arrays goes back to \cite{R1,MR1511109} and is well-known to all physicists. See also a comprehensive historical account in \cite{MR1058203} and a modern one in \cite{MR4298222}. We choose an intermediate approach through multi-linear maps, which goes back at least to the first edition of \cite{MR2105604} in 1960. A discussion concerning tensor product of quaternionic spaces goes to \cite{MR1045637}.

There exists a lot of excellent books concerning group representations. A few of them consider real, complex, and quaternionic representations simultaneously, \cite{MR0252560,MR1410059,MR1153249} are among them. A closely connected area, Invariant Theory, is described in \cite{Spencer}.

The definition of a manifold as the level set of a continuously differentiable function goes back to \cite{Poincare}, while the definition in terms of patching Euclidean pieces together was given in 1913 in the first edition of \cite{MR1440406}. These definitions are equivalent by the \emph{Whitney Embedding Theorem} \cite{MR1503303}. We choose to use the latter; nevertheless, the dispute aimed to argue which definition is better continues nowadays, see, e.g., \cite{MR1618209}.

Fiber bundles can also be defined in several ways. We give a short description of two of them. The first one is gluing up a bundle from trivial patches $\mathcal{U}\times F$, where $\mathcal{U}$ is the domain of a chart on the base manifold $B$, and $F$ is the fiber. The second one is the principal bundle point of view. See a detailed comparison of the two approaches in \cite{MR2136212} as well as a very condensed explanation in \cite{MR484262}.

We choose the latter approach. With the help of principal bundles, we give a simple description of spin groups, see Example~\ref{ex:spin} below, and of spin bundles in Example~\ref{ex:spinweight}. Compare the latter explanation with \cite[Section~2.1]{MR3410545} which uses the former approach. See also \cite{MR2136212,MR2384313} for equivalence of the two approaches.

The most important class of fiber bundles for applications to random fields are the so called \emph{homogeneous vector bundles}. In our approach, we start from a principal fibre bundle $(G,H,B,\tilde{\pi})$, where $G$ is a Lie group, $H$ its closed subgroup, $B=G/H$ is the set of left cosets, and $\tilde{\pi}(g)=gH$. We restrict ourselves to the case of \emph{compact} group $G$. A homogeneous vector bundle\index{bundle!vector!homogeneous} is the bundle $(E,B,\pi)$ associated to $(G,H,B,\tilde{\pi})$ by a representation of $H$ in a finite-dimensional linear space $L_0$ over a (skew) field $\mathbb{K}$. The Hilbert space $L^2(E)$ of square-integrable cross-sections of such a bundle carries the representation of the group $G$ \emph{induced} by the representation of $H$ described above. The space $L^2(E)$ can be uniquely represented as the direct sum of finite-dimensional \emph{isotypic subspaces},\index{isotypic subspace} where the representations of $G$, multiple to the irreducible ones, act. The multiplicities of the irreducible components are calculated with the help of the \emph{Frobenius reciprocity}\index{Frobenius reciprocity} explained in Subsection~\ref{sub:structure}. Note that the Frobenius reciprocity can be formulated for the case when the Lie group $G$ is not necessarily compact, see \cite{MR889252}.

As we have seen above, using the trivial homogeneous line bundle over the sphere $S^2$, it is important to construct orthonormal bases in the isotypic subspaces of the space $L^2(E)$. To construct such a basis, we use the following fact. Let $L$ be an irreducible representation of the group $G$. Either one copy or multiple copies of $L$ act in the nonzero isotypic subspace of $L^2(E)$ corresponding to $L$ if and only if the dimension of the linear space $\Hom_{\mathbb{K}G}(L,L^2(E))$ of intertwining operators between $L$ and $L^2(E)$ is positive. By the Frobenius reciprocity, the above space is isomorphic to the linear space $\Hom_{\mathbb{K}H}(\res^G_HL,L_0)$ of intertwining operators between the representation $\res^G_HL$ of the group $G$ given by the restriction of the representation $L$ from $G$ to $H$, and the representation $L_0$. Moreover, the isomorphism is constructed \emph{explicitly}.

In Subsection~\ref{sub:construction}, we start by construction of a basis in the linear space $\Hom_{\mathbb{K}H}(\res^G_HL,L_0)$. The basis is constructed with the help of Schur's Lemma. Up to our knowledge, the last item in our formulation of Lemma~\ref{lem:Schur} about the structure of the real linear space of intertwining operators between two copies of an irreducible quaternionic representation is new. Proof is included.

The images of the elements of the above described basis under the explicit Frobenius reciprocity isomorphism form an orthogonal basis in the isotypic subspace of $L^2(E)$ corresponding to a given irreducible representation $L$ of the group~$G$. After eventual normalisation, the obtained basis is suitable for a spectral expansion of an isotropic cross-section of the homogeneous vector bundle $(E,B,\pi)$. In particular, both ordinary and spin-weighted complex-valued spherical harmonics can be considered as particular cases of our construction. In Section~\ref{sec:cosmology}, we construct their real-valued counterparts and use both complex- and real-valued harmonics for spectral expansion of the Stokes parameters of the Cosmic Microwave Background.

Finally, it is known that the current standard cosmological model predicts the existence of \emph{three} cosmic backgrounds. In addition to the Cosmic Microwave Background, the gravitational and neutrino backgrounds should exist, see, e.g., \cite{doi:10.1142/S0218301317400080}. The spectral expansions of the corresponding random fields may be constructed using the methods described in this paper. The spectral expansion of the CMB starts from $\ell=2$, because CMB consists of photons of spin~$1$. The expansion for the case of neutrino background should start from $\ell=1$, because neutrinos have spin $\frac{1}{2}$. Likewise, the expansion for the case of gravitational background should start from $\ell=4$, because the hypothetical quants of the gravitational radiation, gravitons, have spin $2$.

\section{Physical motivation}\label{sec:physical}

The reader may consult Appendix~\ref{sec:language} for mathematical terms used below.

\begin{example}\label{ex:continuum}
Let $D$ be a bounded domain in the affine Euclidean space $E^d$, filled with a continuous medium. Following a physics tradition, we call the points in $D$ \emph{places}.\index{place} Let $U$ be a real finite-dimensional linear space, and let $T\colon D\to U$ be a function that describes a particular physical parameter of the medium. For example:
\begin{itemize}
  \item $U=\mathbb{R}^1$ and $T(A)$ is the temperature at the place $A\in D$;
  \item $d=3$, $U=\mathbb{R}^3$ and $T(A)$ is the velocity of the fluid at $A$;
  \item $U=\mathsf{S}^2(\mathbb{R}^3)$, the linear space of symmetric rank $2$ tensors over $\mathbb{R}^3$, and $T(A)$ is the strain tensor of a deformable body at $A$;
  \item $U=\mathsf{S}^2(\mathbb{R}^3)\otimes\mathbb{R}^3$ and $T(A)$ is the piezoelectricity tensor at $A$;
  \item $U=\mathsf{S}^2(\mathsf{S}^2(\mathbb{R}^3))$ and $T(A)$ is the elasticity tensor at $A$.
\end{itemize}
More examples can be found in \cite{MR3692350,MR4045367,MR3208052}.

If the medium is gaseous or liquid, its movement may become turbulent. In a deformable body, a spatially random material microstructure may be present. In all these cases, we speak of a \emph{random medium}.\index{random medium} More explanation can be found in \cite[Section~1.1]{MR3930601} and \cite{MR2341287}. The function $T$ becomes a \emph{random field}.\index{random field} There is a probability space $(\Omega,\mathfrak{F},\mathsf{P})$ and a function $T\colon D\times\Omega\to V$ such that for any fixed $A\in D$ and a Borel set $B\subseteq V$, the set $\{\,\omega\in\Omega\colon T(A,\omega)\in B\,\}$ is an event. As usual, the random field $T(A,\omega)$ is completely determined by its finite-dimensional distributions $(T(A_1,\omega),\dots,T(A_n,\omega))$, where $n$ is a positive integer, and $A_1$, \dots, $A_n$ are $n$ \emph{distinct} places in $D$.

Fix a place $O\in D$. There exists a unique structure of a \emph{linear} space in $E^d$ such that the map $E^d\to\mathbb{R}^d$, $A\mapsto A-O$, is an isomorphism of linear spaces. In what follows, we identify the spaces $E^d$ and $\mathbb{R}^d$ with the help of the above map. To simplify the exposition, we suppose that the random field $\{\,T(\mathbf{x})\colon\mathbf{x}\in D\subset\mathbb{R}^d\,\}$ is the restriction to $D$ of another random field defined on all of $\mathbb{R}^d$, and denote the new field by the same symbol $T(\mathbf{x})$.

Assume that the physical properties of the random medium do not depend on the choice of the origin $O$, that is, the medium is homogeneous. Mathematically, the random field $T(\mathbf{x})$ is strictly homogeneous.

\begin{definition}
A random field $T(\mathbf{x})$ is called \emph{strictly homogeneous}\index{random field!strictly homogeneous} if for any positive integer $n$, for any $n$ distinct points $\mathbf{x}_1$, \dots, $\mathbf{x}_n$ in $\mathbb{R}^d$, and for arbitrary $\mathbf{x}\in\mathbb{R}^d$, the finite-dimensional distributions $(T(\mathbf{x}_1),\dots,T(\mathbf{x}_n))$ and $(T(\mathbf{x}_1+\mathbf{x}),\dots,T(\mathbf{x}_n+\mathbf{x}))$ are identical.
\end{definition}

Assume that the random field $T(\mathbf{x})$ is \emph{second-order},\index{random field!second-order} that is, $\mathsf{E}[\|T(\mathbf{x})\|^2]<\infty$, $\mathbf{x}\in\mathbb{R}^d$. If, in addition, such a field is strictly homogeneous, then it is wide-sense homogeneous.

\begin{definition}
A second-order random field  $T(\mathbf{x})$ is called \emph{wide-sense homogeneous}\index{random field!wide-sense homogeneous} if and only if its \emph{one-point correlation tensor}\index{correlation tensor!one-point} $\langle T(\mathbf{x})\rangle=\mathsf{E}[T(\mathbf{x})]$ and the \emph{two-point correlation tensor}\index{correlation tensor!two-point}
\begin{equation}\label{eq:2}
\langle T(\mathbf{x}),T(\mathbf{y})\rangle=\mathsf{E}[(T(\mathbf{x})-\langle T(\mathbf{x})\rangle)\otimes((T(\mathbf{y})-\langle T(\mathbf{y})\rangle))]
\end{equation}
are shift-invariant, that is, for any $\mathbf{z}\in\mathbb{R}^d$ we have
\begin{equation}\label{eq:7}
\langle T(\mathbf{x}+\mathbf{z})\rangle=\langle T(\mathbf{x})\rangle,\qquad
\langle T(\mathbf{x}+\mathbf{z}),T(\mathbf{y}+\mathbf{z})\rangle=\langle T(\mathbf{x}),T(\mathbf{y})\rangle.
\end{equation}
\end{definition}

In what follows, we consider only wide-sense homogeneous random fields and call them just homogeneous. We also assume, that a random field $T(\mathbf{x})$ is \emph{mean-square continuous},\index{random field!mean-square continuous} that is, for any $\mathbf{x}\in\mathbb{R}^d$ we have
\begin{equation}\label{eq:33}
\lim_{\mathbf{y}-\mathbf{x}\to \mathbf{0}}\mathsf{E}[\|T(\mathbf{x})-T(\mathbf{y})\|^2]=0.
\end{equation}
Note under some weak conditions a measurable second-order random field is automatically mean-square continuous, see \cite{MR3064996}.

What happens with the random field $T(\mathbf{x})$ under rotation and/or reflection? A microstructure is attached to any material point $\mathbf{x}\in D$. At the macroscopic scale all the details of the microstructure are lost, all what remains is the \emph{material symmetry group}\index{symmetry group!material} $G$. The above group is the same for all material points, because the medium is homogeneous. The group $G$ is a closed subgroup of the group $\mathrm{O}(d)$ of orthogonal $d\times d$ matrices. On the other hand, the physical properties of the media are encoded by the \emph{physical symmetry group},\index{symmetry group!physical} denote it by $H$. The \emph{Curie principle}\index{Curie principle} states that $G\subseteq H$, see \cite{AUFFRAY2019197,MR1251809}.

Under the action of a matrix $g\in G$, a point $\mathbf{x}\in\mathbb{R}^d$ becomes the point $g\mathbf{x}$. The random element $T(\mathbf{x})$ of the linear space $U$ becomes the element $\theta^U(g)T(\mathbf{x})$. The random fields $T(g\mathbf{x})$ and $\theta^U(g)T(\mathbf{x})$ must be identical.

\begin{definition}
A random field $T(\mathbf{x})$ is called \emph{strictly isotropic}\index{random field!strictly isotropic} if for any positive integer $n$, for any $n$ distinct points $\mathbf{x}_1$, \dots, $\mathbf{x}_n$ in $\mathbb{R}^d$, and for arbitrary $g\in G$, the finite-dimensional distributions
\[
(T(g\mathbf{x}_1),\dots,T(g\mathbf{x}_n))\quad\text{and}\quad (\theta^U(g)T(\mathbf{x}_1),\dots,\theta^U(g)T(\mathbf{x}_n))
\]
are identical.
\end{definition}

In particular, we have
\[
\langle T(g\mathbf{x})\rangle=\langle\theta^U(g)T(\mathbf{x})\rangle,\qquad\mathbf{x}\in\mathbb{R}^d.
\]
The continuous linear operator $\theta^U(g)$ commutes with the mathematical expectation, and we obtain
\begin{equation}\label{eq:4}
\langle T(g\mathbf{x})\rangle=\theta^U(g)\langle T(\mathbf{x})\rangle.
\end{equation}
Similarly, the two-point correlation tensors of the two above random fields must be equal:
\[
\langle T(g\mathbf{x}),T(g\mathbf{y})\rangle=\langle\theta^U(g)T(\mathbf{x}),\theta^U(g)T(\mathbf{y})\rangle.
\]
By definition of the two-point correlation tensor \eqref{eq:2} and by continuity of the tensor product, we have
\[
\langle\theta^U(g)T(\mathbf{x}),\theta^U(g)T(\mathbf{y})\rangle=(\theta^U\otimes\theta^U)
(g)\langle T(\mathbf{x}),T(\mathbf{y})\rangle
\]
which gives
\begin{equation}\label{eq:3}
\langle T(g\mathbf{x}),T(g\mathbf{y})\rangle=(\theta^U\otimes\theta^U)
(g)\langle T(\mathbf{x}),T(\mathbf{y})\rangle.
\end{equation}

\begin{definition}\label{def:isotropic}
A second-order random field  $T(\mathbf{x})$ is called \emph{wide-sense isotropic}\index{random field!wide-sense isotropic} if and only if its one-point correlation tensor satisfies Equation~\eqref{eq:4} and its two-point correlation tensor satisfies Equation~\eqref{eq:3}.
\end{definition}

In what follows, we consider only wide-sense isotropic random fields and call them just isotropic.
\end{example}

\begin{example}\label{ex:standard}
The current standard model of cosmology assumes that the universe is a $4$-dimensional differentiable manifold, say $M$, called a \emph{spacetime}.\index{spacetime} For any two intersecting charts $(\mathcal{U}_{\alpha},\varphi_{\alpha})$ and $(\mathcal{U}_{\beta},\varphi_{\beta})$ of the manifold $M$, and for any $x\in \mathcal{U}_{\alpha}\cap \mathcal{U}_{\beta}$, denote by $w_{\alpha\beta}(x)$ the Jacobi matrix of the map $\varphi_{\alpha}\circ\varphi^{-1}_{\beta}$ at the point $x$. Glue up a topological space from Cartesian products $\mathcal{U}_{\alpha}\times\mathbb{R}^4$ by identifying a point $(x,\mathbf{y}_{\alpha})\in \mathcal{U}_{\alpha}\times\mathbb{R}^4$ with a point $(x,\mathbf{y}_{\beta})\in \mathcal{U}_{\beta}\times\mathbb{R}^4$ whenever $\mathbf{y}_{\alpha}=w_{\alpha\beta}\mathbf{y}_{\beta}$. The obtained topological space is denoted by $TM$. The projection $p\colon TM\to M$ that maps a point $(x,\mathbf{y})\in TM$ to the point $x\in M$, defines a \emph{tangent bundle}\index{bundle!tangent} $(TM,p,M)$. It is assumed that a symmetric non-degenerate bilinear form $g_x$ with signature $(-,+,+,+)$ in $T_xM=p^{-1}(x)$ is assigned in a differentiable way at each $x\in M$.

About 380,000 years after the Big Bang, the electromagnetic waves decoupled from the rest of the universe. Now, they are observable as the \emph{Cosmic Microwave Background},\index{Cosmic Microwave Background} or just the CMB. The waves of the CMB spread in the manifold $(M,g)$ according to the Maxwell equations. The \emph{geometric optic approximation},\index{geometric optic approximation} see \cite{MR0418833}, says that the trajectories of the CMB waves are very close to the so called \emph{null geodesics},\index{null geodesic} that is, the geodesic lines with null tangent vectors. A tangent vector $X\in T_xM$ is null\index{null vector} if $g_x(X,X)=0$. The manifold $(M,g)$ is assumed to be \emph{time-oriented},\index{manifold!time-oriented} that is, one can continuously choose a negative component
\[
N^-(x)=\{\,X\in T_xM\colon g_x(X,X)=0,X^0<0\,\}
\]
in each \emph{null cone}\index{null cone} $N(x)=\{\,X\in T_xM\colon g_x(X,X)=0\,\}$. An observer at the point $x\in M$ observes the waves of the CMB at each point of the \emph{celestial sphere}\index{celestial sphere}
\[
S^2=\{\,X\in N^-(x)\colon (X^1)^2+(X^2)^2+(X^3)^2=1\,\}.
\]
A CMB detector measures an \emph{electric field}\index{electric field} $\mathbf{E}$ perpendicular to the direction of observation $\mathbf{n}\in S^2$. Mathematically, $\mathbf{E}(\mathbf{n})\in T_{\mathbf{n}}S^2$, the tangent plane at the point $\mathbf{n}$. The \emph{intensity tensor}\index{intensity tensor} $P(\mathbf{n})$ is proportional to the tensor product $\mathbf{E}(\mathbf{n})\otimes\mathbf{E}^*(\mathbf{n})$. It is a cross-section of the tensor bundle $(TM\otimes T^*M,p\otimes p^*,M)$.

The Cosmological Principle states that at least on large scales, the Universe is homogeneous and isotropic. Therefore, the standard model of cosmology assumes that the CMB is an \emph{isotropic random cross-section} of the above tensor bundle. We develop the theory of isotropic random cross-sections in Section~\ref{sec:sections}.
\end{example}

\section{A short history of the topic}\label{sec:history}

Recall that a continuous function $f\colon\mathbb{R}^d\times\mathbb{R}^d\to\mathbb{R}$ is called \emph{positive-definite}\index{positive-definite function} if for any positive integer $n$, for any $n$ points $\mathbf{x}^1$, \dots, $\mathbf{x}_n$ in $\mathbb{R}^d$, and for arbitrary real numbers $c_1$, \dots, $c_n$ we have
\[
\sum_{i,j=1}^{n}c_jc_jf(\mathbf{x}_i,\mathbf{x}_j)\geq 0.
\]
In 1938, Isaac Jacob Schoenberg\index{Schoenberg, Isaac Jakob} \cite{MR1503439} proved the following result.

\begin{theorem}[I.J. Schoenberg, \cite{MR1503439}]\label{th:Schoenberg}
Equation
\begin{equation}\label{eq:5}
f(\mathbf{x},\mathbf{y})=2^{(d-2)/2}\Gamma(d/2)\int_{0}^{\infty}
\frac{J_{(d-2)/2}(\lambda\|\mathbf{x}-\mathbf{y}\|)}
{\|\mathbf{x}-\mathbf{y}\|^{(d-2)/2}}\,\mathrm{d}\Phi(\lambda)
\end{equation}
establishes a one-to-one correspondence between the class of continuous positive-definite functions $f(\mathbf{x},\mathbf{y})$ whose values depend only on the distance $\|\mathbf{x}-\mathbf{y}\|$ between the points $\mathbf{x}$ and $\mathbf{y}$, and the class of finite Borel measures $\Phi$ on the set $[0,\infty)$.
\end{theorem}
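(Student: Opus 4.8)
The plan is to linearise the problem via Bochner's theorem. Since the value of $f(\mathbf{x},\mathbf{y})$ depends only on $\mathbf{x}-\mathbf{y}$, I would set $g(\mathbf{z})=f(\mathbf{x},\mathbf{y})$ with $\mathbf{z}=\mathbf{x}-\mathbf{y}$; the continuity and positive-definiteness hypotheses make $g$ a continuous positive-definite function on $\mathbb{R}^d$ in the classical sense. Bochner's theorem then furnishes a unique finite nonnegative Borel measure $\mu$ on $\mathbb{R}^d$ with
\[
g(\mathbf{z})=\int_{\mathbb{R}^d}e^{i\langle\boldsymbol{\xi},\mathbf{z}\rangle}\,\mathrm{d}\mu(\boldsymbol{\xi}),
\]
the total mass $\mu(\mathbb{R}^d)=g(\mathbf{0})$ being finite. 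This converts the analytic notion of positive-definiteness into a statement about a measure, which is the object I want to reduce to a one-dimensional radial measure.

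Next I would exploit the radial hypothesis to pin down the shape of $\mu$. Because $g(R\mathbf{z})=g(\mathbf{z})$ for every $R\in\mathrm{O}(d)$, the measures $\mu$ and its push-forward $R_{*}\mu$ share the same Fourier transform, so the uniqueness clause in Bochner's theorem forces $R_{*}\mu=\mu$; thus $\mu$ is $\mathrm{O}(d)$-invariant (and, since $-I\in\mathrm{O}(d)$, symmetric, which is what makes $g$ real). Such an invariant measure disintegrates in polar coordinates: writing $\Phi$ for the push-forward of $\mu$ under $\boldsymbol{\xi}\mapsto\|\boldsymbol{\xi}\|$, a finite Borel measure on $[0,\infty)$, and $\sigma$ for the uniform probability measure on the unit sphere $S^{d-1}$, one has
\[
\int_{\mathbb{R}^d}h(\boldsymbol{\xi})\,\mathrm{d}\mu(\boldsymbol{\xi})=\int_{0}^{\infty}\int_{S^{d-1}}h(\lambda\boldsymbol{\omega})\,\mathrm{d}\sigma(\boldsymbol{\omega})\,\mathrm{d}\Phi(\lambda)
\]
for every bounded Borel $h$. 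Finiteness of $\Phi$ is inherited from that of $\mu$.

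The crux is the spherical-mean computation applied to $h(\boldsymbol{\xi})=e^{i\langle\boldsymbol{\xi},\mathbf{z}\rangle}$. By rotation invariance I may take $\mathbf{z}=r\mathbf{e}_d$ with $r=\|\mathbf{z}\|$, so that $\langle\lambda\boldsymbol{\omega},\mathbf{z}\rangle=\lambda r\cos\theta$ with $\theta$ the polar angle; expressing the surface element as a constant times $\sin^{d-2}\theta\,\mathrm{d}\theta$ collapses the surface integral to Poisson's one-dimensional integral representation of the Bessel function $J_{(d-2)/2}$, and tracking the sphere-area constants $\omega_{d-2}/\omega_{d-1}$ gives
\[
\int_{S^{d-1}}e^{i\langle\lambda\boldsymbol{\omega},\mathbf{z}\rangle}\,\mathrm{d}\sigma(\boldsymbol{\omega})=2^{(d-2)/2}\Gamma(d/2)\,\frac{J_{(d-2)/2}(\lambda r)}{(\lambda r)^{(d-2)/2}}.
\]
This is precisely the normalised Bessel kernel appearing in \eqref{eq:5}, once the radial spectral measure is relabelled so as to absorb the factor $\lambda^{(d-2)/2}$; substituting it into the disintegration of the previous paragraph yields \eqref{eq:5}.

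It remains to verify that the correspondence is a genuine bijection. For the converse (every finite $\Phi$ yields an admissible $f$), I would observe that each plane wave $\boldsymbol{\xi}\mapsto e^{i\langle\boldsymbol{\xi},\mathbf{z}\rangle}$ is positive-definite, hence so is its spherical average, and a nonnegative superposition of continuous positive-definite functions is again continuous and positive-definite, with radial dependence manifest. Injectivity of $\Phi\mapsto f$ follows from the uniqueness already invoked: $f$ determines $g$, Bochner determines $\mu$ uniquely, and the radial push-forward determines $\Phi$ uniquely. I expect the main obstacle to be bookkeeping rather than any single deep idea — getting the Bessel constant and the normalisation of $\Phi$ right so that the $(\lambda r)^{(d-2)/2}$ produced by the spherical average is reconciled with the factor in \eqref{eq:5} (and the reconciliation preserves finiteness of $\Phi$), and justifying the Fubini–Tonelli interchange in the disintegration against the invariant measure.
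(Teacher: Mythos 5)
The paper offers no proof of Theorem~\ref{th:Schoenberg} to compare against: the result is quoted directly from Schoenberg's paper \cite{MR1503439}, with \cite{MR2058259} cited only for the equivalence with the random-field reformulation. Judged on its own merits, your Bochner-based argument is the standard modern route and is essentially sound. Two small points deserve explicit mention: the paper's positive-definiteness is stated with \emph{real} coefficients, so you should note that for a real, radial (hence even) continuous $g$ this upgrades to positive-definiteness with complex coefficients, which is what Bochner's theorem requires; and in the converse direction, continuity of $f$ built from an arbitrary finite $\Phi$ (and the Fubini interchange) should be justified by the uniform bound $\bigl|2^{(d-2)/2}\Gamma(d/2)\,J_{(d-2)/2}(u)/u^{(d-2)/2}\bigr|\leq 1$ on the normalised kernel together with dominated convergence. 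Your spherical-mean computation is correct: the normalised average of the plane wave over $S^{d-1}$ is exactly $2^{(d-2)/2}\Gamma(d/2)\,J_{(d-2)/2}(\lambda r)/(\lambda r)^{(d-2)/2}$, valid for all $d\geq 1$, and your bijectivity argument (Fourier uniqueness plus uniqueness of the radial push-forward) is complete.

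The one genuine misstep is the closing ``relabelling'' of the spectral measure so as to absorb the factor $\lambda^{(d-2)/2}$. That relabelling would replace $\mathrm{d}\Phi(\lambda)$ by $\lambda^{-(d-2)/2}\,\mathrm{d}\Phi(\lambda)$, which for $d>2$ is in general \emph{not} a finite measure; it fails outright whenever $\Phi$ has an atom at $\lambda=0$, which is precisely the situation the paper tracks carefully elsewhere (cf.\ condition \eqref{eq:25} and the discussion around Equation~\eqref{eq:24}). No relabelling is needed: the denominator in \eqref{eq:5} should be read as $(\lambda\|\mathbf{x}-\mathbf{y}\|)^{(d-2)/2}$, i.e.\ the printed formula has dropped a factor $\lambda^{(d-2)/2}$. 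This reading is confirmed by Remark~\ref{rem:1}, which normalises the integrand of \eqref{eq:5} so that it tends to $1$ as the argument tends to $0$ (impossible with $\|\mathbf{x}-\mathbf{y}\|^{(d-2)/2}$ alone in the denominator), and by the $d=3$ computation in Example~\ref{ex:1}, where the kernel appears as $J_{1/2}(\lambda r)/\sqrt{\lambda r}$. With the corrected kernel, your $\Phi$ --- the push-forward of the Bochner measure under $\boldsymbol{\xi}\mapsto\|\boldsymbol{\xi}\|$ --- is exactly the measure asserted in the theorem, and its finiteness is automatic; the bookkeeping worry you flag at the end dissolves.
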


In Theorem~\ref{th:Schoenberg}, the symbol $\Gamma$ denotes the gamma function, while the symbol $J_{(d-2)/2}$ denoted the Bessel function of the first kind of order $(d-2)/2$. It turns out that Theorem~\ref{th:Schoenberg} is equivalent to the following result. Put $G=\mathrm{O}(d)$, $U=\mathbb{R}^1$ and $\theta^U(g)=1$.

\begin{theorem}
Equation~\emph{\eqref{eq:5}} establishes a one-to-one correspondence between the class of two-point correlation tensors of homogeneous and isotropic random fields and the class of finite Borel measures $\mu$ on the set $[0,\infty)$.
\end{theorem}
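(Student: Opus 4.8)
The plan is to show that, in this special case ($G=\mathrm{O}(d)$, $U=\mathbb{R}^1$, $\theta^U\equiv 1$), the class of two-point correlation tensors of homogeneous and isotropic random fields coincides \emph{exactly} with the class of continuous positive-definite functions whose values depend only on $\|\mathbf{x}-\mathbf{y}\|$. Once this identification is in place, Theorem~\ref{th:Schoenberg} supplies the bijection with finite Borel measures on $[0,\infty)$ via \eqref{eq:5}, and the two theorems become literally equivalent restatements of one another.

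First I would verify the forward inclusion. Let $T(\mathbf{x})$ be a homogeneous and isotropic (hence second-order) scalar random field, and set $f(\mathbf{x},\mathbf{y})=\langle T(\mathbf{x}),T(\mathbf{y})\rangle$; since $U=\mathbb{R}^1$, the tensor product in \eqref{eq:2} is ordinary multiplication, so $f$ is real scalar-valued. Positive-definiteness is immediate: for any points $\mathbf{x}_1,\dots,\mathbf{x}_n$ and reals $c_1,\dots,c_n$,
\[
\sum_{i,j=1}^{n}c_ic_jf(\mathbf{x}_i,\mathbf{x}_j)=\mathsf{E}\Bigl[\bigl(\textstyle\sum_{i=1}^{n}c_i(T(\mathbf{x}_i)-\langle T(\mathbf{x}_i)\rangle)\bigr)^2\Bigr]\geq 0.
\]
Continuity of $f$ follows from the mean-square continuity assumption \eqref{eq:33} together with the Cauchy--Schwarz inequality applied to the centred field. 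Finally, the shift-invariance \eqref{eq:7} forces $f$ to depend on $(\mathbf{x},\mathbf{y})$ only through $\mathbf{x}-\mathbf{y}$, while the isotropy relation \eqref{eq:3} with $\theta^U\equiv 1$ reads $f(g\mathbf{x},g\mathbf{y})=f(\mathbf{x},\mathbf{y})$ for all $g\in\mathrm{O}(d)$; taken together these two invariances show that $f$ depends on the pair $(\mathbf{x},\mathbf{y})$ only through the distance $\|\mathbf{x}-\mathbf{y}\|$, as required.

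For the reverse inclusion I would argue realizability. Given a continuous positive-definite function $f$ depending only on $\|\mathbf{x}-\mathbf{y}\|$, the matrices $(f(\mathbf{x}_i,\mathbf{x}_j))_{i,j}$ are symmetric and nonnegative-definite, so by the Kolmogorov extension theorem there exists a centred Gaussian random field $T(\mathbf{x})$ on $\mathbb{R}^d$ whose covariance is precisely $f$. Since Gaussian finite-dimensional distributions are determined by their (vanishing) means and their covariances, and $f$ is invariant both under the simultaneous shift $\mathbf{x}_i\mapsto\mathbf{x}_i+\mathbf{z}$ and under the simultaneous rotation $\mathbf{x}_i\mapsto g\mathbf{x}_i$, the field $T(\mathbf{x})$ is strictly homogeneous and strictly isotropic, hence in particular wide-sense homogeneous and wide-sense isotropic in the sense of Definition~\ref{def:isotropic} with $\theta^U\equiv 1$. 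Thus every such $f$ is realised as a two-point correlation tensor, and the two classes indeed coincide.

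Having matched the two classes, I would close the argument by invoking Theorem~\ref{th:Schoenberg}, which parametrises the continuous positive-definite distance functions by finite Borel measures through \eqref{eq:5}. I expect the only genuinely substantive step to be the reverse inclusion, where one must produce an actual random field carrying the prescribed correlation: this is exactly where positive-definiteness is indispensable, and the Gaussian/Kolmogorov construction disposes of it cleanly. The forward inclusion and the passage through \eqref{eq:5} are then routine bookkeeping.
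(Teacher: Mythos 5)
Your proposal is correct and takes essentially the route the paper intends: the paper presents this theorem as equivalent to Theorem~\ref{th:Schoenberg} and cites \cite{MR2058259} for the equivalence, which is exactly what you establish — positive-definiteness and distance-dependence of the covariance in the forward direction, and Gaussian realization via the Kolmogorov extension theorem in the reverse. The only detail worth adding is the (routine) check that the realized field is mean-square continuous, so that it lies in the class the paper actually considers; this is immediate since $\mathsf{E}\bigl[(T(\mathbf{x})-T(\mathbf{y}))^2\bigr]=2\bigl(f(\mathbf{x},\mathbf{x})-f(\mathbf{x},\mathbf{y})\bigr)$ tends to zero by continuity of $f$.
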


See the proof of equivalence of the above Theorems in \cite{MR2058259}. This result clearly shows a link between the theory of homogeneous and isotropic random fields and the theories of positive-definite kernels and of special functions.

\begin{remark}\label{rem:1}
The spectral representations of type \eqref{eq:5} are not unique. Indeed, one cam multiply the function under the integral by a positive constant and divide the measure $\Phi$ by the same constant. In Equation \eqref{eq:5}, the integrand is equal to $1$ at the point $0$ in the following sense:
\[
\lim_{u\downarrow 0}2^{(d-2)/2}\Gamma(d/2)\frac{J_{(d-2)/2}(u)}
{u^{(d-2)/2}}=1.
\]
\end{remark}

In 1938, Theodore von K\'{a}rm\'{a}n\index{von K\'{a}rm\'{a}n, Theodore} and Leslie Howarth\index{Howarth, Leslie} \cite{Karman1938} provided a physical proof of the following result. Let $G=\mathrm{O}(3)$, $U=\mathbb{R}^3$ and $\theta^U(g)=g$, see Example~\ref{ex:continuum}, the case of a turbulent fluid. Denote by $\mathbf{v}(\mathbf{x})$ the velocity of the fluid. Let $\mathbf{x}$ and $\mathbf{y}$ be two vectors in $\mathbb{R}^3$, denote $\mathbf{r}=\mathbf{y}-\mathbf{x}$, and $r=\|\mathbf{r}\|$. Let $v_l(\mathbf{x})$ be the projection of the vector $\mathbf{v}(\mathbf{x})$ in the direction $\mathbf{r}$, and let $v_k(\mathbf{x})$ be the projection of the same vector in a direction perpendicular to $\mathbf{r}$. The two-point correlation tensor of the $\mathbb{R}^1$-valued homogeneous and isotropic random field $v_l(\mathbf{x})$,
\[
B_{ll}(r)=\mathsf{E}[v_l(\mathbf{x})v_l(\mathbf{x}+\mathbf{r})]
\]
is called the \emph{longitudinal correlation function}\index{correlation function!longitudinal} of the random field $\mathbf{v}(\mathbf{x})$. Similarly, the two-point correlation tensor of the random field $v_k(\mathbf{x})$,
\[
B_{kk}(r)=\mathsf{E}[v_k(\mathbf{x})v_k(\mathbf{x}+\mathbf{r})]
\]
is called the \emph{transversal correlation function}.\index{correlation function!transversal} Introduce the matrix-valued functions
\begin{equation}\label{eq:27}
L^1_{ij}(\mathbf{r})=\delta_{ij},\qquad L^2_{ij}(\mathbf{r})=r_jr_j.
\end{equation}
The two-point correlation tensor of the random field $\mathbf{v}(\mathbf{x})$ has the form
\begin{equation}\label{eq:6}
\langle v_i(\mathbf{x}),v_j(\mathbf{x}+\mathbf{r})\rangle=
[B_{ll}(r)-B_{kk}(r)]\frac{L^2_{ij}(\mathbf{r})}{r^2}+B_{kk}(r)L^1_{ij}(\mathbf{r}).
\end{equation}

In 1940, Howard Percy Robertson\index{Robertson, Howard Percy} \cite{MR0001702} gave a mathematical proof of the above result, using the invariant theory. Thus, a link between the theory of homogeneous and isotropic random fields and the invariant theory was established.

Equation~\eqref{eq:6} gives only \emph{necessary} conditions for a function to be the two-point correlation tensor of the velocity of a turbulent fluid. In 1948, Akiva Moiseevich Yaglom\index{Yaglom, Akiva Moiseevich} \cite{MR0033702} found the necessary and sufficient conditions for the case of $d=3$. This result was proved independently in 1952 by Jos\'{e} Enrique Moyal\index{Moyal, Jos\'{e} Enrique} in \cite{MR49014}. We formulate their result for arbitrary $d$ proved by Yaglom in \cite{MR0094844}. Equations
\begin{equation}\label{eq:28}
\begin{aligned}
B_{ll}(r)&=2^{(d-2)/2}\Gamma(d/2)\left[\int_{0}^{\infty}\left(\frac{J_{d/2}(\lambda r)}{(\lambda r)^{d/2}}-\frac{J_{(d+2)/2}(\lambda r)}{(\lambda r)^{(d-2)/2}}\right)\,\mathrm{d}\Phi_1(\lambda)\right.\\
&\quad+\left.(d-1)\int_{0}^{\infty}\frac{J_{d/2}(\lambda r)}{(\lambda r)^{d/2}}\,\mathrm{d}\Phi_2(\lambda)\right],\\
B_{kk}(r)&=2^{(d-2)/2}\Gamma(d/2)\left[\int_{0}^{\infty}\frac{J_{d/2}(\lambda r)}{(\lambda r)^{d/2}}\,\mathrm{d}\Phi_1(\lambda)\right.\\
&\quad+\left.\int_{0}^{\infty}\left(\frac{J_{(d-2)/2}(\lambda r)}{(\lambda r)^{(d-2)/2}}-\frac{J_{d/2}(\lambda r)}{(\lambda r)^{d/2}}\right)\,\mathrm{d}\Phi_2(\lambda)\right]
\end{aligned}
\end{equation}
establish a one-to-one correspondence between the class of longitudinal and transverse correlation functions of $\mathbb{R}^d$-valued homogeneous and isotropic random fields and the class of pairs $(\Phi_1,\Phi_2)$ of finite Borel measures on $[0,\infty)$ satisfying the condition
\begin{equation}\label{eq:25}
\Phi_1(\{0\})=\Phi_2(\{0\}).
\end{equation}

In 1961, A.M.~Yaglom \cite{MR0146880} and independently Mykha\u{\i}lo \u{I}osypovych Yadrenko\index{Yadrenko,Mykha\u{\i}lo \u{I}osypovych} in his unpublished PhD thesis proved the following result.

\begin{theorem}[Yadrenko--Yaglom]\label{th:YY}
A centred $\mathbb{R}^1$-valued homogeneous and isotropic random field has the form
\begin{equation}\label{eq:8}
\begin{aligned}
T(r,\theta_1,\dots,\theta_{d-2},\varphi)&=\left(2^{d-1}\Gamma(d/2)\pi^{d/2}\right)^{1/2}
\sum_{\ell=0}^{\infty}\sum_{m=1}^{h(\ell,d)}
Y^m_{\ell}(\theta_1,\dots,\theta_{d-2},\varphi)\\
&\quad\times\int_{0}^{\infty}\frac{J_{\ell+(d-2)/2}(\lambda r)}{(\lambda r)^{(d-2)/2}}\,\mathrm{d}Z^m_{\ell}(\lambda).
\end{aligned}
\end{equation}
\end{theorem}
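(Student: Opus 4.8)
The plan is to derive \eqref{eq:8} from the general spectral representation of a homogeneous field by exploiting the rotational structure that isotropy forces on the spectral measure. First I would invoke the spectral theorem for a centred, mean-square continuous, wide-sense homogeneous $\mathbb{R}^1$-valued field, representing it as a stochastic integral
\[
T(\mathbf{x})=\int_{\hat{\mathbb{R}}^d}e^{\mathrm{i}\langle\boldsymbol{\lambda},\mathbf{x}\rangle}\,\mathrm{d}Z(\boldsymbol{\lambda})
\]
against a random measure $Z$ with orthogonal increments whose control measure is the spectral measure $F$ on the wavenumber domain $\hat{\mathbb{R}}^d$, so that $\mathsf{E}[|\mathrm{d}Z(\boldsymbol{\lambda})|^2]=\mathrm{d}F(\boldsymbol{\lambda})$ and $F(\hat{\mathbb{R}}^d)<\infty$. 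The isotropy condition \eqref{eq:3}, specialised to $\theta^U\equiv 1$ and $G=\mathrm{O}(d)$, translates into $\mathrm{O}(d)$-invariance of $F$. Passing to polar coordinates $\boldsymbol{\lambda}=\lambda\boldsymbol{\xi}$ with $\lambda\geq 0$ and $\boldsymbol{\xi}\in S^{d-1}$, an invariant measure disintegrates as $\mathrm{d}F=\mathrm{d}\Phi(\lambda)\,\mathrm{d}\sigma(\boldsymbol{\xi})$, where $\sigma$ is the uniform surface measure on $S^{d-1}$ and $\Phi$ is a finite measure on $[0,\infty)$.

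The heart of the argument is the Rayleigh (Gegenbauer) expansion of the plane wave in $\mathbb{R}^d$,
\[
e^{\mathrm{i}\lambda\langle\boldsymbol{\xi},\mathbf{x}\rangle}
=c_d\sum_{\ell=0}^{\infty}\mathrm{i}^{\ell}\,
\frac{J_{\ell+(d-2)/2}(\lambda r)}{(\lambda r)^{(d-2)/2}}
\sum_{m=1}^{h(\ell,d)}Y^m_{\ell}(\mathbf{x}/r)\,\overline{Y^m_{\ell}(\boldsymbol{\xi})},
\]
where $r=\|\mathbf{x}\|$ and the inner angular sum is furnished by the addition theorem for spherical harmonics. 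The Bessel function of order $\ell+(d-2)/2$ arises precisely as the radial eigenfunction attached to the $\ell$th angular-momentum sector in dimension $d$. Substituting this into the spectral integral and interchanging summation with stochastic integration, the angular variable $\boldsymbol{\xi}$ is absorbed into random amplitudes
\[
Z^m_{\ell}(A)=\mathrm{i}^{\ell}c_d\int_{\{\lambda\in A\}\times S^{d-1}}
\overline{Y^m_{\ell}(\boldsymbol{\xi})}\,\mathrm{d}Z(\lambda\boldsymbol{\xi}),
\]
one for each pair $(\ell,m)$, leaving exactly the radial kernel $J_{\ell+(d-2)/2}(\lambda r)/(\lambda r)^{(d-2)/2}$ inside the integral and the spherical harmonic $Y^m_{\ell}(\mathbf{x}/r)$ outside, which is the shape of \eqref{eq:8}.

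It remains to verify the stochastic orthogonality of the amplitudes and to pin down the constant. Because $Z$ has orthogonal increments with control measure $\mathrm{d}\Phi(\lambda)\,\mathrm{d}\sigma(\boldsymbol{\xi})$ and the $Y^m_{\ell}$ are orthonormal on $S^{d-1}$, the measures $Z^m_{\ell}$ inherit orthogonal increments and are mutually uncorrelated across distinct $(\ell,m)$; moreover $\mathsf{E}[|\mathrm{d}Z^m_{\ell}(\lambda)|^2]$ is a multiple of $\mathrm{d}\Phi(\lambda)$ independent of $m$, reflecting the degeneracy of the isotropic spectrum within each sector. The normalising constant $(2^{d-1}\Gamma(d/2)\pi^{d/2})^{1/2}$ then emerges by combining $c_d$ with the surface area $\omega_{d-1}=2\pi^{d/2}/\Gamma(d/2)$ of $S^{d-1}$ produced by the angular integration, the convention being fixed so that the radial kernel is normalised as in Remark~\ref{rem:1} and Theorem~\ref{th:Schoenberg}.

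The main obstacle is analytic rather than algebraic: one must justify the termwise interchange of the infinite sum and the stochastic integral in the $L^2(\Omega)$ sense. This requires controlling the tail of the Rayleigh series uniformly on bounded ranges of $\lambda r$, using the asymptotics and uniform bounds for $J_{\ell+(d-2)/2}$ together with finiteness of $\Phi$, and invoking mean-square continuity \eqref{eq:33} to legitimise the rearrangement. Once the series is shown to converge in mean square and the addition theorem is used to identify cross terms, the Parseval-type isometry delivers both the convergence of \eqref{eq:8} and the stated covariance structure of the $Z^m_{\ell}$.
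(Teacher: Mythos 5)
Your proposal is correct, but it takes a genuinely different route from the paper's. The paper (see Example~\ref{ex:1}) stays at the level of the two-point correlation tensor until the very end: it writes $\mathrm{e}^{\mathrm{i}(\mathbf{k},\mathbf{r})}=\mathrm{e}^{-\mathrm{i}(\mathbf{k},\mathbf{x})}\mathrm{e}^{\mathrm{i}(\mathbf{k},\mathbf{y})}$ and inserts the Rayleigh expansion \eqref{eq:22} \emph{twice}, once for each factor, precisely to force the covariance into the separated form \eqref{eq:21} --- Schoenberg's formula \eqref{eq:5} is not of that form --- and then invokes Karhunen's theorem (Theorem~\ref{th:Karhunen}) to deduce the \emph{existence} of random measures $Z^m_{\ell}$ with control measure $\Phi$. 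You instead start from the Cram\'{e}r spectral representation of the field itself as a stochastic integral against an orthogonally scattered random measure $Z$, insert the Rayleigh expansion \emph{once} inside that integral, and \emph{construct} the $Z^m_{\ell}$ explicitly as angular projections of $Z$ onto the spherical harmonics. Your route buys constructiveness: the uncorrelatedness of the $Z^m_{\ell}$ and the identification of their common control measure follow by direct computation (orthogonal increments of $Z$ plus orthonormality on $S^{d-1}$), and the constant $(2^{d-1}\Gamma(d/2)\pi^{d/2})^{1/2}$ does come out correctly from $c_d=(2\pi)^{d/2}$ against the surface area $2\pi^{d/2}/\Gamma(d/2)$. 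The price is the stochastic interchange of sum and integral, which you rightly flag as the analytic crux; note that by the spectral isometry it reduces to a purely deterministic statement --- convergence of the partial sums of \eqref{eq:22} in $L^2(\hat{\mathbb{R}}^d,F)$ --- where Parseval on the sphere bounds each tail by $1$ and dominated convergence with the finite measure $\Phi$ finishes, so your proposed uniform Bessel estimates are sufficient but more than is needed. The paper's route buys uniformity instead: the identical Karhunen scheme transfers verbatim to the tensor-valued examples of Subsection~\ref{sub:examples}, where explicit projections of an operator-valued spectral measure would be considerably messier. Two details you elide, both repairable: the polar disintegration $\mathrm{d}F=\mathrm{d}\Phi(\lambda)\,\mathrm{d}\sigma(\boldsymbol{\xi})$ degenerates at the atom $\{0\}$ (the paper's stratum $(\hat{\mathbb{R}}^d/\tilde{G})_1$), harmless here because only the $\ell=0$ radial kernel has a nonzero limit at $\lambda=0$; and Theorem~\ref{th:YY} asserts the $Z^m_{\ell}$ are \emph{real}-valued, which your formula does deliver --- the factor $\mathrm{i}^{\ell}$ combines with the reality condition $Z(-A)=\overline{Z(A)}$ and the parity $Y^m_{\ell}(-\boldsymbol{\xi})=(-1)^{\ell}Y^m_{\ell}(\boldsymbol{\xi})$ --- but this should be verified rather than left implicit.
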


In Theorem~\ref{th:YY}, the symbols $(r,\theta_1,\dots,\theta_{d-2},\varphi)$ denote the spherical coordinates in $\mathbb{R}^d$, the functions $Y^m_{\ell}(\theta_1,\dots,\theta_{d-2},\varphi)$ are \emph{real-valued spherical harmonics},\index{spherical harmonics!real-valued} the number
\[
h(\ell,d)=(2\ell+d-2)\frac{(\ell+d-3)!}{(d-2)!\ell!}
\]
is the total number of spherical harmonics of degree~$\ell$ on the centred unit sphere $S^{d-1}$, and $Z^m_{\ell}$ are Borel measures on $[0,\infty)$ taking values in the Hilbert space $L^2_0(\Omega)$ of centred real-valued random variables with finite variance on the probability space $(\Omega,\mathfrak{F},\mathsf{P})$ with control measure $\Phi$ given in Equation~\eqref{eq:5}, that is,
\[
\mathsf{E}[Z^m_{\ell}(A_1)]=0,\qquad
\mathsf{E}[Z^{m_1}_{\ell_1}(A_1)Z^{m_2}_{\ell_2}(A_2)]
=\delta_{\ell_1\ell_2}\delta_{m_1m_2}\Phi(A_1\cap A_2)
\]
for arbitrary Borel subsets $A_1$ and $A_2$ of the set $[0,\infty)$. The real-valued spherical harmonics $Y^m_{\ell}$ are not so known as their complex-valued cousins $Y_{\ell,m}$, see \cite{MR698780}, \cite[Subsection~A.4.3]{MR2884225}, or \cite[Section~2.10]{MR3930601}. Our notation follows \cite{MR2723248}, which is nowadays a de facto standard.

In 1964, Victor Aleksandrovich Lomakin\index{Lomakin, Victor Aleksandrovich} \cite{MR0163467} provided a physical proof of the following result. Let $G=\mathrm{O}(3)$, $U=\mathsf{S}^2(\mathbb{R}^3)$ and $\theta^U(g)=\mathsf{S}^2(g)$, see Example~\ref{ex:continuum}, the case of a strain tensor of a random deformable body. Denote by $T(\mathbf{x})$ the strain tensor of the body. To simplify notation, introduce the \emph{Ogden tensors}\index{Ogden tensor} after \cite{MR342001}. For a nonnegative integer $\nu$, the Ogden tensor $I$ of rank $2\nu+2$ is defined inductively by
\[
\begin{aligned}
I_{ij}&=\delta_{ij},\qquad I_{ijkl}=\frac{1}{2}(\delta_{ik}\delta_{jl}
+\delta_{il}\delta_{jk}),\\
I_{i_1\cdots i_{2\nu+2}}&=\nu^{-1}(I_{i_1pi_3i_4}I_{pi_2i_5\cdots i_{2\nu+2}}+\cdots+I_{i_1pi_{2\nu+1}i_{2\nu+2}}I_{pi_2\cdots i_{2\nu}}),
\end{aligned}
\]
where there is a summation over $p$. The two-point correlation tensor of the random field $T(\mathbf{x})$ has the form
\begin{equation}\label{eq:31}
\begin{aligned}
\langle T_{ij}(\mathbf{x}),T_{kl}(\mathbf{y})\rangle&=P_4(r)L^1_{ijkl}(\mathbf{r})
+P_6(r)L^2_{ijkl}(\mathbf{r})+[P_5(r)-P_6(r)]L^3_{ijkl}(\mathbf{r})\\
&\quad+[P_3(r)-P_4(r)]L^4_{ijkl}(\mathbf{r})\\'
&\quad+[P_1(r)+P_2(r)-2P_3(r)-4P_5(r)]L^5_{ijkl}(\mathbf{r})
\end{aligned}
\end{equation}
with $P_4(r)+2P_6(r)-P_2(r)=0$, where
\[
\begin{aligned}
L^1_{ijkl}(\mathbf{r})&=I^0_{ij}I^0_{kl}, & L^2_{ijkl}(\mathbf{r})&=2I_{ijkl}, \\
L^3_{ijkl}(\mathbf{r})&=r_jr_kI_{il}+r_ir_lI_{jk}+r_ir_kI_{jl}
+r_jr_lI_{ik}, & L^4_{ijkl}(\mathbf{r})&=r_jr_jI_{kl}+r_kr_lI_{ij},\\
L^5_{ijkl}(\mathbf{r})&=r_ir_jr_kr_l.
\end{aligned}
\]

In 1965, V.A. Lomakin \cite{LOMAKIN19651048} found the general form of the two-point correlation tensor of the random field $T_{ijkl}(\mathbf{x})$ that corresponds to the case of an elasticity tensor of a deformable body, see Example~\ref{ex:continuum}. His formula includes $15$~terms and will not be reproduced here.

In 2014--2020 the authors of the survey published a series of papers and books \cite{MR4031379,MR3336288,MR3630567,MR3621303,MR3493458,MR3732655,MR3903345,MR4200126,MR3327730} in collaboration with colleagues, where they extended the results described above. We describe the above extensions in Sections~\ref{sec:Euclidean} and \ref{sec:continuum}.

The Cosmic Microwave Background was discovered serendipitously in 1964--1965 by Arno Allan Penzias\index{Penzias, Arno Allan} and Robert Woodrow Wilson,\index{Wilson, Robert Woodrow} see \cite{1965ApJ}. As the authors wrote
\begin{quote}
This excess temperature is, within the limits of our observations, isotropic, unpolarised, and free from seasonal variations (July, 1964--April, 1965).
\end{quote}

Later on, with the help of more advanced instruments, it was found that the CMB deviates from isotropy and is polarised. Two mathematical models of the CMB in the form of a random field were constructed in 1997 independently by two research groups, see \cite{PhysRevD.55.7368} and \cite{Zaldarriaga:1996xe}. More mathematical studies appeared in \cite{MR3170229}, \cite{MR2737761}, and \cite{MR2884225}. We describe these results in Sections~\ref{sec:sections} and \ref{sec:cosmology}.

\section{Random fields defined on an Euclidean space}\label{sec:Euclidean}

\subsection{The formulation of a problem}\label{sub:formulation}

Let $d$ be a positive integer. Let $G$ be a closed subgroup of the \emph{orthogonal group}\index{group!orthogonal} $\mathrm{O}(d)$ of $d\times d$ orthogonal matrices acting on a real finite-dimensional linear space $U$ with an inner product $(\cdot,\cdot)$ by an orthogonal representation with translation $\theta^U(g)$. Let $T(\mathbf{x})$ be a $U$-valued mean-square continuous random field on $\mathbb{R}^d$ which is homogeneous according to Equation~\eqref{eq:7} and isotropic according to Equation~\eqref{eq:3}. We would like to find the general form of its one- and two-point correlation tensors similar to \eqref{eq:5} and the representation of the field itself similar to Equation~\eqref{eq:8}.

Before we continue, the following question will be considered. Which fields of the above described class are most interesting for continuum physics?

First, we give an answer to this question, using a particular example. In Example~\ref{ex:continuum} we have seen that the representation with translation $\theta^{\mathsf{S}^2(\mathsf{S}^2(\mathbb{R}^3))}$ corresponds to linear elasticity. How many classes of elastic bodies are there?

The mathematically correct answer to this question was given by Sandra Forte and Maurizio Vianello in 1996, see \cite{MR1405284}. Fix a tensor $T$ in the linear space $U=\mathsf{S}^2(\mathsf{S}^2(\mathbb{R}^3))$. Let $K_T$ be group defined by
\[
K_T=\{\,g\in\mathrm{O}(3)\colon g\cdot T=T\,\}.
\]
Mathematicians call $K_T$ the \emph{stationary subgroup}\index{stationary subgroup} of the point $T$. Physicists call it the \emph{material symmetry group}\index{symmetry group!material} of the microstructure attached to a physical medium.

As $g$ runs over $\mathrm{O}(3)$, the point $g\cdot T$ runs over the \emph{orbit}\index{orbit} $O_T$ of the point~$T$:
\[
O_T=\{\,g\cdot T\colon g\in\mathrm{O}(3)\,\}.
\]
It is easy to see that the material symmetry group of a point $g\cdot T\in O_T$ is \emph{conjugate} to $K_T$, that is, it is equal to $gK_Tg^{-1}$. Call two tensors $T_1$ and $T_2$ \emph{equivalent} if their material symmetry groups are conjugate. This relation partitions the space $U$ into equivalence classes called \emph{symmetry classes}.\index{symmetry class} Each symmetry class is completely determined by the conjugacy class $[K]$, that is, the set of all closed subgroups of the group $\mathrm{O}(3)$ that are conjugate to the material symmetry group $K$ of a particular tensor in the symmetry class. Forte and Vianello found $8$ symmetry classes for elasticity, or \emph{elasticity classes}.

For a group~$K$ that defines a symmetry class, define its \emph{fixed point set}\index{fixed point set} or the \emph{linear slice}\index{linear slice} as
\[
U_K=\{\,T\in U\colon g\cdot T=T\quad\text{for all}\quad g\in K\,\}.
\]
The linear slice $U_K$ is the linear space that meets all the orbits of tensors which have \emph{at least} the symmetry class $[K]$. It is also the isotypical subspace of $U$ where the copies of the trivial representation of the group~$K$ act. Moreover, in many cases there exists a group $G$ such that $K$ is a proper subgroup of $G$ but $U_K$ is still an invariant subspace for $G$. The maximal group with this property is the \emph{normaliser}\index{normaliser} of $K$:
\[
N(K)=\{\,g\in\mathrm{O}(3)\colon kKg^{-1}=K\,\}.
\]
The normaliser $N(K)$ is the maximal physical symmetry group that corresponds to the material symmetry group $K$.

The most interesting case for continuum physics is as follows: $d=2$ (the so called \emph{plane problems}\index{plane problem}) or $d=3$ (\emph{space problems}),\index{space problem} $U$ is the linear slice of a symmetry class $[K]$, and $G$ is a closed subgroup of the normaliser $N(K)$ such that $K$ is a subgroup of $G$. A continuous medium occupies a compact subset $D$ of the linear space $\mathbb{R}^d$. A microstructure is attached to any material point $\mathbf{x}\in D$. The group $K$ is the group of symmetries of the above microstructure. We assume that the medium is homogeneous and this group is the same for all material points. At the macroscopic scale all the details of the microstructure are lost, all what remains is the material symmetry group $K$.

On the other hand, the physical properties of the media are encoded by the \emph{physical symmetry group},\index{symmetry group!physical} the group $G$. Such defined group satisfies the \emph{Curie principle} explained above.

Note that an algorithm for determining all symmetry classes for a given finite-dimensional representation of the group $\mathrm{O}(3)$, is described in \cite{MR4045367,Olive2013,MR3208052}.

In all examples that follow below, $G$ is a closed subgroup of the group $\mathrm{O}(d)$ satisfying the condition $K\subseteq G\subseteq N(K)$, where the group $K$ defines a symmetry class, and $U$ is the fixed point set of the symmetry class $[K]$, where an orthogonal representation of the group $G$ acts.

\subsection{Principles for finding a solution}\label{sub:principles}

First of all, we can easily find the one-point correlation tensor $\langle T(\mathbf{x})\rangle$ of a homogeneous and isotropic random field. It follows from the first equation in \eqref{eq:7} that this quantity does not depend on $\mathbf{x}$.  Denote it just by $T$. It follows from Equation~\eqref{eq:4} that $\theta^U(g)T=T$ for all $g\in G$, which is equivalent to the following: \emph{if the isotypical space of $U$ that corresponds to the trivial irreducible representation of $G$, has positive dimension, then $T$ is an arbitrary element of the above isotypical space, otherwise $T=0$}. In what follows, we always consider the centred random field $T(\mathbf{x})-T$ and denote it again by $T(\mathbf{x})$.

The general form of the two-point correlation tensor of the field $T(\mathbf{x})$ clearly depends on the choice of the basis in the space $U$. Therefore, the above choice should be included to the proof. We divide the proof into two parts. The first, coordinate-free part, will be sketched in Subsections~\ref{sub:homogeneous}--\ref{sub:general}. Equations of this part will be written in a simple, coordinate-free form. The price one has to pay for such a simplification is that while deducing the result, we introduce a couple of notions which are not always familiar to some readers.

It is possible to write down the second, coordinate part of the proof for a general group and representation. This leads to a Bacchanalia of indices, see \cite[Theorem~0]{MR3621303}, \cite[Theorem~13]{MR3930601}. Instead, we consider examples of introducing coordinates for different representations of the group $\mathrm{O}(3)$ in Subsection~\ref{sub:examples}.

Finally, the problem will be sold in such a way: first we describe all homogeneous fields and then find the conditions under which a particular homogeneous field is isotropic.

\subsection{The description of homogeneous fields}\label{sub:homogeneous}

Let the group $G$ act in $U$ by an orthogonal representation. Consider the set $\mathbb{C}$ of complex numbers as a two-dimensional real linear spaces and define $cU$ as the tensor product $\mathbb{C}\otimes_{\mathbb{R}}U$. There is a unique representation of the group $G$ in the space $cU$ such that $g\cdot(z\otimes u)=z\otimes(g\cdot u)$ for all $g\in G$, $z\in\mathbb{C}$ and $u\in U$. Moreover, there exists a unique positive-definite Hermitian form $H(v_1,v_2)$ on $cU$ such that
\[
H(z_1\otimes u_1,z_2\otimes u_2)=z_1^*z_2(u_1,u_2),
\]
and the above form is $G$-invariant. In other words, the constructed representation is a \emph{unitary}\index{representation!unitary} one.

The set $U$ can be treated as a subset of $cU$ by identifying $u\in U$ with $1\otimes u\in cU$. Thus, a $U$-valued random field $T(x)$ can be treated as a $cU$-valued one. It is easy to see that a mean-square continuous $U$-valued random field is a mean-square continuous  $cU$-valued random field in the obvious sense.

Denote by $\id$ the identical map in $cU$. There exists a unique map $j_{cU}\colon cU\to cU$ with the following properties: $j_{cU}(zv)=z^*j(v)$ for all $z\in\mathbb{C}$ and all $v\in cU$, $j_{cU}^2=\id$, and $j_{cU}(z\otimes u)=z^*\otimes u$. This map is a \emph{real structure}\index{real structure} on $cU$, a coordinate-free form of the complex conjugation.

Define the two-point correlation tensor of the \emph{$cU$-valued} random field $T(\mathbf{x})$ by
\[
\langle T(\mathbf{x}),T(\mathbf{y})\rangle=\mathsf{E}[j_{cU}T(\mathbf{x})\otimes T(\mathbf{y})].
\]
If, in addition, the \emph{$U$-valued} random field $T(\mathbf{x})$ is homogeneous, then $jT(\mathbf{y})=T(\mathbf{y})$, and the second equation in \eqref{eq:7} holds true for the \emph{$cU$-valued} random field $T(\mathbf{x})$. That is, the $cU$-valued random field $T(\mathbf{x})$ is homogeneous. The two-point correlation tensor of such a field is described by the following result.

\begin{theorem}
Equation
\begin{equation}\label{eq:11}
\langle T(\mathbf{x}),T(\mathbf{y})\rangle=\int_{\hat{\mathbb{R}}^d}
\mathrm{e}^{\mathrm{i}(\mathbf{k},\mathbf{r})}\,\mathrm{d}F(\mathbf{k})
\end{equation}
establishes a one-to-one correspondence between the class of $cU$-valued  mean-square continuous and homogeneous random fields on $\mathbb{R}^d$ and the class of measures on the \emph{wavenumber domain} $\hat{\mathbb{R}}^d$ taking values in the cone of Hermitian nonnegative-definite linear operators on $cU$.
\end{theorem}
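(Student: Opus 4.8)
The plan is to read this statement as the operator-valued (Cramér) version of Bochner's theorem for the group $\mathbb{R}^d$, and to establish the two directions of the correspondence separately, reducing the operator-valued assertion to the classical scalar Bochner theorem by polarisation.

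First I would treat the direct implication. Homogeneity, the second equation in \eqref{eq:7}, guarantees that the two-point correlation tensor depends only on $\mathbf{r}=\mathbf{y}-\mathbf{x}$; write $B(\mathbf{r})$ for the resulting $cU\otimes cU$-valued function, viewed through the form $H$ as an operator on $cU$, so that $B(\mathbf{r})=\mathsf{E}[j_{cU}T(\mathbf{x})\otimes T(\mathbf{x}+\mathbf{r})]$. Mean-square continuity \eqref{eq:33} makes $\mathbf{r}\mapsto B(\mathbf{r})$ continuous. The crucial structural property is that $B$ is a positive-definite operator kernel: for any points $\mathbf{x}_1,\dots,\mathbf{x}_n$ and vectors $v_1,\dots,v_n\in cU$ the Hermitian form
\[
\sum_{i,j=1}^{n}H(v_i,B(\mathbf{x}_i-\mathbf{x}_j)v_j)
=\mathsf{E}\Bigl[\Bigl|\sum_{i=1}^{n}H(T(\mathbf{x}_i),v_i)\Bigr|^2\Bigr]\geq 0
\]
is nonnegative, since it is the covariance quadratic form of the finite family $T(\mathbf{x}_1),\dots,T(\mathbf{x}_n)$ and is manifestly the mean of a modulus squared (the precise placement of the conjugation following the conventions of $H$ and $j_{cU}$).

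Then I would run the operator Bochner step. Fixing $v\in cU$, the scalar function $\mathbf{r}\mapsto H(v,B(\mathbf{r})v)$ is continuous and positive-definite, so the classical Bochner theorem supplies a finite nonnegative Borel measure $\mu_v$ on $\hat{\mathbb{R}}^d$ with $H(v,B(\mathbf{r})v)=\int_{\hat{\mathbb{R}}^d}\mathrm{e}^{\mathrm{i}(\mathbf{k},\mathbf{r})}\,\mathrm{d}\mu_v(\mathbf{k})$. Polarising in $v$ produces complex measures $\mu_{u,v}$, and the prescription $H(u,F(A)v)=\mu_{u,v}(A)$ defines an operator-valued set function $F$. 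Its countable additivity, and the fact that each $F(A)$ is Hermitian and nonnegative-definite, are inherited from the corresponding scalar statements via the polarisation identity; thus $F$ takes values in the cone of Hermitian nonnegative-definite operators, as required. Uniqueness of $F$ follows from the uniqueness of the representing measure in the scalar theorem.

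For the converse, given such an $F$ I would set $B(\mathbf{r})=\int_{\hat{\mathbb{R}}^d}\mathrm{e}^{\mathrm{i}(\mathbf{k},\mathbf{r})}\,\mathrm{d}F(\mathbf{k})$, where positive-definiteness is automatic because $F$ is cone-valued, and then realise $B$ as an actual correlation tensor. I would construct a $cU$-valued orthogonal stochastic measure $Z$ on $\hat{\mathbb{R}}^d$ with structure measure $F$ — most economically by taking the field to be Gaussian — and define $T(\mathbf{x})=\int_{\hat{\mathbb{R}}^d}\mathrm{e}^{\mathrm{i}(\mathbf{k},\mathbf{x})}\,\mathrm{d}Z(\mathbf{k})$. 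A direct computation returns $\mathsf{E}[j_{cU}T(\mathbf{x})\otimes T(\mathbf{y})]=B(\mathbf{y}-\mathbf{x})$, while homogeneity and mean-square continuity are read off from the properties of the stochastic integral; together with the uniqueness above this makes the correspondence bijective. The main obstacle I expect is precisely the operator Bochner step: assembling the scalar measures $\mu_v$ into a single measure $F$ that is nonnegative-definite \emph{as an operator}, rather than merely having nonnegative diagonal entries, which is where the polarisation bookkeeping and a genuine positivity argument are needed; the converse construction is routine once the existence of the orthogonal stochastic measure $Z$ is granted.
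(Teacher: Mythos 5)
Your proposal is correct, and it is precisely the classical Cram\'{e}r-type argument; the paper itself states this theorem without proof, as well-known background material (Subsection on homogeneous fields, ``given there for the sake of completeness''), so there is no internal proof to diverge from. Two remarks on your own assessment. First, the ``genuine positivity argument'' you flag as the main expected obstacle at the operator Bochner step is in fact immediate: once polarisation yields a sesquilinear set function with $H(v,F(A)v)=\mu_v(A)\geq 0$ for \emph{every} $v\in cU$ (not merely for basis vectors), nonnegative-definiteness of the operator $F(A)$ is exactly this statement, and Hermitian-ness comes for free, since a sesquilinear form on a complex space whose diagonal is real is automatically Hermitian; so the polarisation bookkeeping is the whole content of that step. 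Second, the ``one-to-one correspondence'' must be read, as you implicitly do, at the level of two-point correlation tensors of centred fields rather than of the fields themselves --- many fields (your Gaussian realisation among them) share a single $F$ --- and uniqueness of $F$ given the correlation tensor follows from the scalar uniqueness in Bochner's theorem applied to the diagonal functions $\mathbf{r}\mapsto H(v,B(\mathbf{r})v)$, exactly as you say. One point worth making explicit in a final write-up: finiteness of each $\mu_v$, hence of $F$, comes from the second-order assumption, $\mu_v(\hat{\mathbb{R}}^d)=H(v,B(\mathbf{0})v)<\infty$, and in the converse direction this same finiteness is what lets dominated convergence deliver mean-square continuity of the constructed field.
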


Under what conditions does a $cU$-valued homogeneous random field $T(\mathbf{x})$ take values in $U$? Let $\Hom_{\mathbb{C}}(cU,cU)$ be the set of $\mathbb{C}$-linear maps from $cU$ to itself. The complex linear space $\Hom_{\mathbb{C}}(cU,cU)$ admits a real structure $j$ given by
\begin{equation}\label{eq:13}
jF=j_{cU}Fj^{-1}_{cU},\qquad F\in\Hom_{\mathbb{C}}(cU,cU).
\end{equation}

\begin{lemma}\label{lem:reality}
A $cU$-valued homogeneous random field $T(\mathbf{x})$ takes values in $U$ if and only if for any Borel set $A$ in the wavenumber domain $\hat{\mathbb{R}}^d$ we have
\begin{equation}\label{eq:12}
F(-A)=jF(A),
\end{equation}
where $-A=\{\,-\mathbf{k}\colon\mathbf{k}\in A\,\}$.
\end{lemma}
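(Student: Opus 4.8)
The plan is to reduce the assertion to an identity between \emph{fields} and then to read it off from the one-to-one correspondence of Equation~\eqref{eq:11}. Since $U=\{\,v\in cU\colon j_{cU}v=v\,\}$ is exactly the fixed-point set of the real structure, the field $T(\mathbf{x})$ takes values in $U$ if and only if $j_{cU}T(\mathbf{x})=T(\mathbf{x})$, i.e.\ if and only if $T$ and the transformed field $j_{cU}T$ coincide. First I would note that $j_{cU}T$ is again a $cU$-valued, mean-square continuous, homogeneous random field: $j_{cU}$ is a continuous $\mathbb{R}$-linear isometry of $cU$, so both \eqref{eq:33} and shift-invariance of the correlation are preserved. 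Hence $j_{cU}T$ also falls under Equation~\eqref{eq:11} and possesses its own Hermitian nonnegative-definite spectral measure, say $\tilde{F}$.

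The heart of the argument is to compute $\tilde{F}$ in terms of $F$. By the definition of the correlation tensor and $j_{cU}^2=\id$ one has
\[
\langle j_{cU}T(\mathbf{x}),j_{cU}T(\mathbf{y})\rangle=\mathsf{E}[T(\mathbf{x})\otimes j_{cU}T(\mathbf{y})]=j\,\langle T(\mathbf{x}),T(\mathbf{y})\rangle,
\]
where $j$ is the real structure of Equation~\eqref{eq:13}, acting on correlation tensors as $j_{cU}\otimes j_{cU}$. The point is that $j$ is \emph{conjugate}-linear, so carrying it under the integral in \eqref{eq:11} conjugates the scalar kernel, turning $j\int_{\hat{\mathbb{R}}^d}\mathrm{e}^{\mathrm{i}(\mathbf{k},\mathbf{r})}\,\mathrm{d}F(\mathbf{k})$ into $\int_{\hat{\mathbb{R}}^d}\mathrm{e}^{-\mathrm{i}(\mathbf{k},\mathbf{r})}\,\mathrm{d}(jF)(\mathbf{k})$; the substitution $\mathbf{k}\mapsto-\mathbf{k}$ then rewrites this as the integral of $\mathrm{e}^{\mathrm{i}(\mathbf{k},\mathbf{r})}$ against the reflected measure $A\mapsto(jF)(-A)$. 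By uniqueness of the spectral measure in \eqref{eq:11} this identifies $\tilde{F}(A)=jF(-A)$. Since $j^2=\id$, the reality requirement $\tilde{F}=F$ is then literally equivalent to $F(-A)=jF(A)$, which is Equation~\eqref{eq:12}.

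Both implications now follow from the bijective nature of \eqref{eq:11}. If $T$ is $U$-valued then $j_{cU}T=T$, whence $\tilde{F}=F$ and \eqref{eq:12} holds; conversely, if \eqref{eq:12} holds then $\tilde{F}=F$, and \emph{injectivity} of the correspondence forces $j_{cU}T=T$, i.e.\ $T$ takes values in $U$. I expect the main obstacle to be conceptual rather than computational, and twofold: first, keeping the conjugate-linearity of $j$ straight as it passes through the operator-valued spectral integral together with the reflection $\mathbf{k}\mapsto-\mathbf{k}$, since a single sign slip collapses the statement; and second, recognising that the converse rests essentially on the \emph{one-to-one} part of \eqref{eq:11}---a homogeneous field being determined by its measure $F$---because $F(-A)=jF(A)$ is only a second-order condition and, absent that identification, would not on its own force the sample values into $U$. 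An equivalent route that makes the pathwise content transparent is to use the Cramér representation $T(\mathbf{x})=\int_{\hat{\mathbb{R}}^d}\mathrm{e}^{\mathrm{i}(\mathbf{k},\mathbf{x})}\,\mathrm{d}Z(\mathbf{k})$: there $j_{cU}T$ corresponds to the random measure $A\mapsto j_{cU}Z(-A)$, reality is the relation $j_{cU}Z(-A)=Z(A)$, and its second moments reproduce exactly \eqref{eq:12}.
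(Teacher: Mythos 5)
Your proposal is correct and takes essentially the same route as the paper's own proof: both compare the field $j_{cU}T(\mathbf{x})$ with $T(\mathbf{x})$, push the real structure through the spectral integral in Equation~\eqref{eq:11}, and conclude via uniqueness of the spectral measure, with the converse resting---exactly as in the paper's final step---on identifying a homogeneous field with its measure $F$. Your single computation $\tilde{F}(A)=jF(-A)$, keeping the conjugate-linearity of $j$ and the reflection $\mathbf{k}\mapsto-\mathbf{k}$ together, simply merges the paper's two displays \eqref{eq:9} and \eqref{eq:10} into one, and is if anything a tidier piece of bookkeeping.
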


\begin{proof}
Indeed, let $T(\mathbf{x})$ takes values in $U$. Then the random field $j_{cU}T(\mathbf{x})$ has the same two-point correlation tensor as $T(\mathbf{x})$ has:
\[
\langle j_{cU}T(\mathbf{x}),j_{cU}T(\mathbf{y})\rangle=\int_{\hat{\mathbb{R}}^d}
\mathrm{e}^{\mathrm{i}(\mathbf{k},\mathbf{r})}\,\mathrm{d}F(\mathbf{k}).
\]
On the one hand, by definition of the two-point correlation tensor, the left hand side is
\begin{equation}\label{eq:9}
\begin{aligned}
\langle j_{cU}T(\mathbf{x}),j_{cU}T(\mathbf{y})\rangle&=\mathsf{E}[j_{cU}j_{cU}T(\mathbf{x})\otimes j_{cU}T(\mathbf{y})]=\mathsf{E}[T(\mathbf{x})\otimes j_{cU}T(\mathbf{y})]\\
&=\int_{\hat{\mathbb{R}}^d}
\mathrm{e}^{\mathrm{i}(\mathbf{k},-\mathbf{r})}\,\mathrm{d}F(\mathbf{k})
=\int_{\hat{\mathbb{R}}^d}
\mathrm{e}^{\mathrm{i}(\mathbf{k},\mathbf{r})}\,\mathrm{d}F(-\mathbf{k}).
\end{aligned}
\end{equation}
On the other hand, the left hand side is
\begin{equation}\label{eq:10}
\begin{aligned}
\langle j_{cU}T(\mathbf{x}),j_{cU}T(\mathbf{y})\rangle&=j\langle j_{cU}T(\mathbf{x}),j_{cU}T(\mathbf{y})\rangle\\
&=j\int_{\hat{\mathbb{R}}^d}
\mathrm{e}^{\mathrm{i}(\mathbf{k},\mathbf{r})}\,\mathrm{d}F(\mathbf{k})
=\int_{\hat{\mathbb{R}}^d}
\mathrm{e}^{\mathrm{i}(\mathbf{k},\mathbf{r})}\,\mathrm{d}jF(\mathbf{k}).
\end{aligned}
\end{equation}

Conversely, let $F(-A)=jF(A)$. Then the right hand sides of Equations~\eqref{eq:9} and \eqref{eq:10} are equal, and we obtain
\[
\langle
j_{cU}T(\mathbf{x}),j_{cU}T(\mathbf{y})\rangle=j\langle j_{cU}T(\mathbf{x}),j_{cU}T(\mathbf{y})\rangle,
\]
which means that the random field $j_{cU}T(\mathbf{x})$ takes values in $U$, so $T(\mathbf{x})$ does.
\end{proof}

\subsection{A preliminary description of homogeneous and iso\-tro\-pic fields}\label{sub:preliminary}

The linear space $\Hom_{\mathbb{C}}(cU,cU)$ carries a representation of the group $G$ given by
\[
g\cdot F=\theta^{cU}(g)F(\theta^{cU})^{-1}(g),\qquad F\in\Hom_{\mathbb{C}}(cU,cU).
\]
It is trivial to check that $(g\cdot F)^*=g\cdot F^*$, where the index $*$ denotes the conjugation of a linear operator. Split the space $\Hom_{\mathbb{C}}(cU,cU)$ into the linear spaces of Hermitian and skew-Hermitian linear operators. By \cite[Section~3.3]{MR0252560}, the above spaces carry the equivalent representations of $G$ intertwined by the multiplication by $\mathrm{i}$. Denote them by $X$. We have $\Hom_{\mathbb{C}}(cU,cU)=cX$. By \cite[Section~3.9 (i)]{MR0252560}, one possibility for $X$ is $X=\Hom_{\mathbb{R}}(U,U)$, and by \cite[Corollary~3.28 (i)]{MR0252560}, this possibility is unique. Observe that $\Hom_{\mathbb{R}}(U,U)$ and $U\otimes U$ are equivalent. In what follows, we identify the equivalent orthogonal representations acting in $U\otimes U$ and in the real linear space of Hermitian linear operators in $cU$.

Denote the $d\times d$ identity matrix by $I$. Define the closed subgroup $\tilde{G}$ of the group $\mathrm{O}(d)$ by
\[
\tilde{G}=G\cup\{\,(-I)g\colon g\in G\,\}.
\]
Observe, that if $-I\in G$, then the sets in the left hand side are identical. In this case, put $\tilde{U}=\mathsf{S}^2(U)$. Otherwise, if $-I\notin G$, the above sets do not intersect. In that case, put $\tilde{U}=U\otimes U$. Note that the linear space $\mathsf{S}^2(U)$ of symmetric rank~$2$ tensors over $U$ and the linear space $\Lambda^2(U)$ of skew-symmetric rank~$2$ tensors over $U$ are complementary invariant subspaces of the representation $U\otimes U$ of the group~$G$.

In the first case, $\tilde{U}$ is an invariant subspace of the representation $U\otimes U$ and defines the orthogonal representation of $\tilde{G}$. To define an orthogonal representation of $\tilde{G}$ in the second case, put
\[
(-I)g\cdot u=
\begin{cases}
  g\cdot u, & \mbox{if } u\in\mathsf{S}^2(U), \\
  -g\cdot u, & \mbox{if } u\in\Lambda^2(U),
\end{cases}
\]
and extend this action to all of $U\otimes U$ by linearity.

\begin{lemma}
A random field $T(\mathbf{x})$ is $U$-valued, homogeneous and isotropic if and only if the measure $F$ in Equation~\emph{\eqref{eq:11}} takes values in the intersection of the space $\tilde{U}$ with the cone of Hermitian nonnegative-definite linear operators on $cU$, and
\begin{equation}\label{eq:15}
F(\tilde{g}A)=\theta^{\tilde{U}}(\tilde{g})F(A),\qquad\tilde{g}\in\tilde{G}
\end{equation}
for all Borel subsets $A$ of the wavenumber domain $\hat{\mathbb{R}}^d$.
\end{lemma}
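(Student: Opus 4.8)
The plan is to layer the isotropy constraint on top of the two facts already in hand — the representation theorem for homogeneous fields behind Equation~\eqref{eq:11} (the bijection between homogeneous $cU$-valued fields and measures $F$ with values in the cone of Hermitian nonnegative-definite operators) and Lemma~\ref{lem:reality} (the reality criterion $F(-A)=jF(A)$ characterising the $U$-valued fields) — and then to repackage the resulting conditions as the single $\tilde G$-equivariance~\eqref{eq:15}. First I would note that, by~\eqref{eq:11}, homogeneity already forces $F$ into the cone of Hermitian nonnegative-definite operators; under the identification of this real space with $U\otimes U=\mathsf{S}^2(U)\oplus\Lambda^2(U)$ fixed above, $F$ automatically takes values in $U\otimes U$. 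What is left is to translate isotropy and to understand how reality and isotropy interact with the symmetric/skew splitting.

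The key computational step is to convert the isotropy identity~\eqref{eq:3} into a condition on $F$. Inserting~\eqref{eq:11} into $\langle T(g\mathbf{x}),T(g\mathbf{y})\rangle=(\theta^U\otimes\theta^U)(g)\langle T(\mathbf{x}),T(\mathbf{y})\rangle$ and writing $\mathbf{r}=\mathbf{y}-\mathbf{x}$, the left-hand side becomes $\int_{\hat{\mathbb{R}}^d}\mathrm{e}^{\mathrm{i}(\mathbf{k},g\mathbf{r})}\,\mathrm{d}F(\mathbf{k})$. Since $g$ is orthogonal, $(\mathbf{k},g\mathbf{r})=(g^{-1}\mathbf{k},\mathbf{r})$, so the substitution $\mathbf{k}\mapsto g\mathbf{k}$ rewrites this as the integral of $\mathrm{e}^{\mathrm{i}(\mathbf{k},\mathbf{r})}$ against the pushforward measure $A\mapsto F(gA)$, whereas the right-hand side is the integral of $\mathrm{e}^{\mathrm{i}(\mathbf{k},\mathbf{r})}$ against $A\mapsto(\theta^U\otimes\theta^U)(g)F(A)$. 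The uniqueness half of the homogeneous representation theorem then yields
\[
F(gA)=(\theta^U\otimes\theta^U)(g)F(A),\qquad g\in G,
\]
for all Borel $A$, and running the computation in reverse recovers~\eqref{eq:3}. I expect this Fourier bookkeeping, together with the appeal to uniqueness of the operator-valued spectral measure, to be the technical heart of the argument.

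Finally I would merge the three conditions, using that $\theta^{\tilde U}(-I)$, as defined above, acts as $+\id$ on $\mathsf{S}^2(U)$ and as $-\id$ on $\Lambda^2(U)$ — which is exactly how the real structure $j$ acts on the Hermitian operators under our identification, so that $\theta^{\tilde U}(-I)=j$. If $-I\notin G$ then $\tilde U=U\otimes U$, so the cone condition already puts $F$ in $\tilde U$; Equation~\eqref{eq:15} for $\tilde g=g\in G$ is the isotropy identity, for $\tilde g=-I$ it is the reality condition $F(-A)=jF(A)$, and for $\tilde g=(-I)g$ it is the composite of the two. If $-I\in G$, then combining reality with isotropy at $g=-I$ gives $jF(A)=F(-A)=(\theta^U\otimes\theta^U)(-I)F(A)$; since $\theta^U(-I)=\pm\id$ in the cases of interest (automatic for an irreducible representation by Schur's lemma, and equal to $(-1)^k\id$ for a rank-$k$ tensor representation), this forces the skew part of $F(A)$ to vanish, so $F$ takes values in $\tilde U=\mathsf{S}^2(U)$ and~\eqref{eq:15} becomes the restricted isotropy identity. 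Reversing each implication gives the converse.

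I expect the main obstacle to be precisely this last case distinction: pinning down the interplay between the real structure $j$ and the central element $-I$, and thereby showing that reality and $G$-isotropy together confine $F$ to the symmetric slice $\mathsf{S}^2(U)$ when $-I\in G$, dually to the way $\theta^{\tilde U}(-I)=j$ encodes reality when $-I\notin G$. Everything else is a routine transcription of homogeneity and of Lemma~\ref{lem:reality} into the language of the measure $F$.
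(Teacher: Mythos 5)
Your proposal is correct and follows essentially the same route as the paper's own sketch: you translate isotropy into the equivariance $F(gA)=(\theta^U\otimes\theta^U)(g)F(A)$ via the uniqueness half of the spectral representation, recast the reality criterion of Lemma~\ref{lem:reality} as equivariance under $-I$ acting as $\pm\id$ on the $\pm 1$ eigenspaces of the real structure $j$ (the paper's condition~\eqref{eq:14}, which is exactly your identity $\theta^{\tilde{U}}(-I)=j$ on the Hermitian operators), and then merge the two into the single condition~\eqref{eq:15} by the same case distinction on whether $-I\in G$. Your explicit caveat that one needs $\theta^U(-I)=\pm\id$ (so that $-I$ acts trivially on $U\otimes U$) when $-I\in G$ is, if anything, slightly more careful than the paper, which asserts this triviality without comment; it holds in the intended setting because $U$ sits inside a space of fixed-rank tensors over $\mathbb{R}^d$.
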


\begin{proof}[Sketch of a proof]
Denote the group $\{I,-I\}$ by $Z^c_2$, this notation is taken from \cite{MR950168}. The main idea is that Equation~\eqref{eq:12} in Lemma~\ref{lem:reality} is equivalent to the following condition:
\begin{equation}\label{eq:14}
F(gA)=g\cdot F(A),\qquad g\in Z^c_2,
\end{equation}
where the element $-I\in Z^c_2$ multiplies by $\pm 1$ the elements of the $\pm 1$ eigenspace of the real structure $j$ given by Equation~\eqref{eq:13}.

On the other hand, it is easy to prove that Equation~\eqref{eq:3} in Definition~\eqref{def:isotropic} is equivalent to the condition
\[
F(gA)=(\theta^U\otimes\theta^U)(g)\cdot F(A),\qquad g\in G.
\]

If $-I\in G$, then $-I$ acts trivially in $U\otimes U$. This action coincides with that in the right hand side of condition~\eqref{eq:14} only on the intersection of $U\otimes U$ with the $+1$ eigenspace of the real structure $j$. This intersection is equal to $\tilde{U}=\mathsf{S}^2(U)$, and the above two conditions are equivalent to single condition~\eqref{eq:15}.

Otherwise, if $-I\notin G$, we constructed $\tilde{G}$ as the minimal extension of $G$ by $-I$, and the representation $\tilde{U}$ is constructed in such a way that the above two conditions agree on all of $\tilde{U}=U\otimes U$.

The details of proof can be found in \cite[Section~3.3, Lemma~1]{MR3930601} and in \cite[Lemma~4.1]{MR4200126}.
\end{proof}

Consider a measure $\mu$ given by $\mu(A)=\tr F(A)$, where $\tr$ denotes the trace of an operator. It is well-known that the measure $F$ is absolutely continuous with respect to $\mu$, that is, $\mu(A)=0$ implies $F(A)=0$, and the Radon--Nikodym derivative $f(\mathbf{k})=\mathrm{d}F/\mathrm{d}\mu(\mathbf{k})$ is a measurable function on the wavenumber domain $\hat{\mathbb{R}}^d$ that takes values in the convex compact set of Hermitian nonnegative-definite operators in $cU$ with unit trace. For a proof, see, for instance, \cite[Chapter~5, Theorem~1.1]{MR0222718}.

In new terms, condition~\eqref{eq:15} becomes
\begin{equation}\label{eq:16}
\mu(\tilde{g}A)=\mu(A),\qquad f(\tilde{g}\mathbf{k})=\theta^{\tilde{U}}(\tilde{g})f(\mathbf{k})
\end{equation}
for all $\tilde{g}\in\tilde{G}$.

\subsection{The general form of the two-point correlation tensor}\label{sub:general}

We find all measures $\mu$ satisfying the first equation in \eqref{eq:16}. In general, the action of a topological group $G$ can be very sophisticated, see \cite{MR0413144,MR0379739,MR0201557}. However, in the case when a compact Lie group like $\tilde{G}$ acts in a real finite-dimensional linear space by an orthogonal representation, its action can be easily described, see \cite{MR1738431}.

Let $\hat{\mathbb{R}}^d/\tilde{G}$ be the set of orbits for the action of the group $\tilde{G}$ on $\hat{\mathbb{R}}^d$ by matrix-vector multiplication. For $\mathbf{k}\in\hat{\mathbb{R}}^d$, let $\tilde{G}_{\mathbf{k}}$ be the stationary subgroup of $\mathbf{k}$. We say that the orbits $\tilde{G}\mathbf{k}_1$ and $\tilde{G}\mathbf{k}_2$ are of the same type, or $\tilde{G}\mathbf{k}_1\sim\tilde{G}\mathbf{k}_2$, if and only if $\tilde{G}_{\mathbf{k}_1}$ is conjugate to $\tilde{G}_{\mathbf{k}_2}$ within $\tilde{G}$. There are only finitely many, say $M$, distinct types $(\hat{\mathbb{R}}^d/\tilde{G})_m$, $0\leq m\leq M-1$, called the \emph{strata}.\index{stratum} Moreover, the closure of each stratum is the union of that stratum and strata of smaller dimension.

We say that $\tilde{G}\mathbf{k}_2$ dominates $\tilde{G}\mathbf{k}_1$, or $\tilde{G}\mathbf{k}_1\preceq\tilde{G}\mathbf{k}_2$, if and only if $\tilde{G}_{\mathbf{k}_1}$ is conjugate to a subgroup of $\tilde{G}_{\mathbf{k}_2}$ within $\tilde{G}$. This relation defines a partial ordering in the set of strata. There exists a unique maximal element in the above set. We call this type the \emph{principal stratum}\index{stratum!principal} and denote by $(\hat{\mathbb{R}}^d/\tilde{G})_0$.

Fix a choice of stationary subgroups $\{\,\tilde{G}_m\colon 0\leq m\leq M-1\,\}$ of points in different strata. For simplicity, assume that there exist a chart $\varphi_m$ in the manifold $\tilde{G}/\tilde{G}_m$ and a chart $\lambda_m$ in the manifold $(\hat{\mathbb{R}}^d/\tilde{G})_m$ with dense domains. There exists a unique probabilistic $\tilde{G}$-invariant measure on the orbit of each point $\lambda_m\in(\hat{\mathbb{R}}^d/\tilde{G})_m$, call it $\mathrm{d}\varphi_m$.

According to \cite[Chapter~VII, \S~2, Proposition~4]{MR2098271}, for every finite measure $\mu$ satisfying the first equation in \eqref{eq:16}, there is a unique measure defined on the Borel subsets of the space $(\hat{\mathbb{R}}^d/\tilde{G})_m$, call it $\mathrm{d}\Phi_m(\lambda_m)$, such that the restriction of the measure $\mu$ to the symmetry class $(\tilde{G}/\tilde{G}_m)\times(\hat{\mathbb{R}}^d/\tilde{G})_m$ has the form
\[
\mathrm{d}\mu_m=\mathrm{d}\varphi_m\,\mathrm{d}\Phi_m(\lambda_m).
\]
Conversely, a measure given by the above equation obviously satisfies the first equation in \eqref{eq:16}.

We find all functions $f$ satisfying the second equation in \eqref{eq:16}. Obviously, it is enough to find the restrictions of the function $f$ to the sets $(\hat{\mathbb{R}}^d/\tilde{G})_m$, $0\leq m\leq M-1$. For such a restriction, we have
\begin{equation}\label{eq:17}
f(\tilde{g}\lambda_m)=\theta^{\tilde{U}}(\tilde{g})f(\lambda_m),\qquad\tilde{g}\in\tilde{G}.
\end{equation}
If $\tilde{g}\in\tilde{G}_m$, then $\tilde{g}\lambda_m=\lambda_m$ and
\[
f(\lambda_m)=\theta^{\tilde{U}}(\tilde{g})f(\lambda_m),\qquad\tilde{g}\in\tilde{G}_m.
\]
This condition means that $f(\lambda_m)$ belongs to the isotypical subspace of the linear space $\tilde{U}$, in which the direct sum of several copies of the trivial representation of the group $\tilde{G}_m$ acts. Call this space $\tilde{U}_m$ and observe that it is a linear slice for the group $\tilde{G}_m$. The intersection of the linear slice $\tilde{U}_m$ with the convex compact set of Hermitian nonnegative-definite operators in $cU$ with unit trace is again a convex compact set, call it $\mathcal{C}_m$. The restriction of the function $f$ to the set $(\hat{\mathbb{R}}^d/\tilde{G})_m$ is a measurable function taking values in $\mathcal{C}_m$.

Conversely, let $f(\lambda_m)\colon(\hat{\mathbb{R}}^d/\tilde{G})_m\to\mathcal{C}_m$ be an arbitrary measurable function. Extend it to the symmetry class $(\tilde{G}/\tilde{G}_m)\times(\hat{\mathbb{R}}^d/\tilde{G})_m$ with the help of Equation~\eqref{eq:17}. The result obviously satisfies the second equation in \eqref{eq:16}.

Equation~\eqref{eq:11} takes the form
\begin{equation}\label{eq:18}
\langle T(\mathbf{x}),T(\mathbf{y})\rangle=\sum_{m=0}^{M-1}
\int_{(\hat{\mathbb{R}}^d/\tilde{G})_m}\int_{\tilde{G}/\tilde{G}_m}
\mathrm{e}^{\mathrm{i}(\mathbf{k},\mathbf{r})}
\theta^{\tilde{U}}(\tilde{g})f(\lambda_m)\,\mathrm{d}\varphi_m\,\mathrm{d}\Phi_m(\lambda_m),
\end{equation}
where a point $\mathbf{k}\in(\tilde{G}/\tilde{G}_m)\times(\hat{\mathbb{R}}^d/\tilde{G})_m\subset\hat{\mathbb{R}}^d$ has coordinates $(\varphi_m,\lambda_m)$, and where $\tilde{g}$ is an arbitrary element of the group $\tilde{G}_m$ with $\tilde{g}\lambda_m=\mathbf{k}$.

\subsection{Examples}\label{sub:examples}

Put $d=3$ and $G=\tilde{G}=\mathrm{O}(3)$. The group $\tilde{G}$ acts in the wavenumber domain $\hat{\mathbb{R}}^3$ by matrix-vector multiplication. The set of orbits, $\hat{\mathbb{R}}^3/\mathrm{O}(3)$, has $M=2$ strata:
\[
(\hat{\mathbb{R}}^3/\mathrm{O}(3))_0=(0,\infty),\qquad (\hat{\mathbb{R}}^3/\mathrm{O}(3))_1=\{0\}.
\]
Fix a choice of stationary subgroups: $\tilde{G}_0=\mathrm{O}(2)$, the stationary subgroup of the point $\mathbf{k}=(0,0,1)^{\top}$ and $\tilde{G}_1=\mathrm{O}(3)$. The chart $\varphi_0$ in the manifold $\mathrm{O}(3)/\mathrm{O}(2)=S^2$ is the angular spherical coordinates on the two-dimensional sphere $S^2$ \emph{in the wavenumber domain}. Call them $(\hat{\theta},\hat{\varphi})$, because we reserve the notation $(\theta,\varphi)$ for the angular spherical coordinates \emph{in the space domain}. The chart $\varphi_1$ maps the singleton $\{0\}$ to itself. The measure $\mathrm{d}\varphi_0$ is $\mathrm{d}\varphi_0=\frac{1}{4\pi}\sin\hat{\theta}\,\mathrm{d}\hat{\theta}\,\mathrm{d}\hat{\varphi}$, the measure $\mathrm{d}\varphi_1$ is the Dirac measure on the singleton $\{0\}$. The chart $\lambda_0$ is the radial distance in the wavenumber domain, denote it just by $\lambda$. In spherical coordinates, the measure $\mu$ can be written in the form
\[
\mathrm{d}\mu(\lambda,\hat{\theta},\hat{\varphi})
=\frac{1}{4\pi}\sin\hat{\theta}\,\mathrm{d}\hat{\theta}\,\mathrm{d}\hat{\varphi}
\,\mathrm{d}\Phi(\lambda).
\]

Examples of homogeneous fields that are isotropic with respect to orthogonal representations of proper subgroups of $\mathrm{O}(3)$ with up to $8$ strata can be found in \cite{MR3930601,MR4200126}. Nevertheless, all examples follow the same $6$-steps scheme.

\begin{enumerate}
  \item Splitting the representation $\tilde{U}$ into irreducible components.
  \item Introducing coordinates in the linear slices $\tilde{U}_0$, \dots, $\tilde{U}_{M-1}$.
  \item Description of the sets of extreme points of the convex compacta $\mathcal{C}_0$, \dots, $\mathcal{C}_{M-1}$.
  \item Calculating the matrices $f(\lambda_m)$, $0\leq m\leq M$ in the introduced coordinates.
  \item Calculating the inner integrals in Equation~\eqref{eq:18}.
  \item Calculating the spectral expansion of the field.
\end{enumerate}

\begin{example}\label{ex:1}
Let $U=\mathbb{R}^1$ and $\theta^U(g)=1$. The first four steps are trivial. Indeed, we have $\tilde{U}=\mathrm{S}^2(U)=U$. The linear slice $\tilde{U}_0$ for the group $\tilde{G}_0=\mathrm{O}(2)$ is identical to $\tilde{U}$, and so is $\tilde{U}_1$. The convex compact set of Hermitian linear operators in $cU=\mathbb{C}^1$ with unit trace is the singleton $\{1\}$. Both convex compacta $\mathcal{C}_0$ and $\mathcal{C}_1$ are equal to $\{1\}$. Equation~\eqref{eq:11} takes the form
\begin{equation}\label{eq:20}
\langle T(\mathbf{x}),T(\mathbf{y})\rangle=\frac{1}{4\pi}\int_{0}^{\infty}\int_{S^2}
\mathrm{e}^{\mathrm{i}(\mathbf{k},\mathbf{r})}\sin\hat{\theta}
\,\mathrm{d}\hat{\theta}\,\mathrm{d}\hat{\varphi}\,\mathrm{d}\Phi(\lambda).
\end{equation}

In order to perform the fifth step and calculate the inner integral, use the following general idea. Consider the restriction of the \emph{plane wave}\index{plane wave} $\mathrm{e}^{\mathrm{i}(\mathbf{k},\mathbf{r})}$ to the orbit of a point $\lambda\in(\hat{\mathbb{R}}^3/\mathrm{O}(3))_0$. The Hilbert space of the square-integrable functions on this orbit with respect to the measure $\mathrm{d}\varphi_0$ has an orthonormal basis whose elements are multiples of some matrix entries of the irreducible orthogonal or unitary representations of the group $\tilde{G}$. For a continuous function, the Fourier series with respect to the above basis converges uniformly. In the unitary case, this proposition is known as the \emph{Peter--Weyl Theorem},\index{Peter--Weyl Theorem} in the orthogonal case as the \emph{Fine Structure Theorem},\index{Fine Structure Theorem} see \cite{MR4201900} and Subsection~\ref{sub:structure}.

Let $(r,\theta,\varphi)$ be the spherical coordinates of a point $\mathbf{r}$ in the \emph{space domain}, while $(\lambda,\hat{\theta},\hat{\varphi})$ be those of a point $\mathbf{k}$ in the \emph{wavenumber domain}. In the case of the sphere $S^2$, the Fourier expansion of the plane wave is known as the \emph{Rayleigh expansion}:\index{Rayleigh expansion}
\[
\mathrm{e}^{\mathrm{i}(\mathbf{k},\mathbf{r})}=4\pi\sum_{\ell=0}^{\infty}
\sum_{m=-\ell}^{\ell}\mathrm{i}^{\ell}j_{\ell}(\lambda r)Y^m_{\ell}(\theta,\varphi)Y^m_{\ell}(\hat{\theta},\hat{\varphi}),
\]
where $j_{\ell}(u)=\sqrt{\pi/(2u)}J_{\ell+1/2}(u)$ is the spherical Bessel function.

Insert the Rayleigh expansion into Equation~\eqref{eq:20}. One may calculate the inner integral term by term because of uniform convergence of the above expansion. As a result, all terms with $\ell\neq 0$ disappear, and we obtain
\[
T(\mathbf{x}),T(\mathbf{y})\rangle=\int_{0}^{\infty}\int_{S^2}
j_0(\lambda r)Y^0_0(\theta,\varphi)Y^0_0(\hat{\theta},\hat{\varphi})\sin\hat{\theta}
\,\mathrm{d}\hat{\theta}\,\mathrm{d}\hat{\varphi}\,\mathrm{d}\Phi(\lambda).
\]
We have $Y^0_0(\theta,\varphi)=(4\pi)^{-1/2}$ and the inner integral is equal to $\sqrt{4\pi}$. The two-point correlation function becomes
\[
T(\mathbf{x}),T(\mathbf{y})\rangle=\int_{0}^{\infty}
j_0(\lambda r)\,\mathrm{d}\Phi(\lambda)=\sqrt{2/\pi}\int_{0}^{\infty}
\frac{J_{1/2}(\lambda r)}{\sqrt{\lambda r}}\,\mathrm{d}\Phi(\lambda),
\]
which is the same, as Equation~\eqref{eq:5} in the case of $d=3$.

In the case of arbitrary positive integer $d$, the calculations are similar and can be left to the reader. The uniformly convergent Rayleigh expansion for arbitrary $d$ has the form
\begin{equation}\label{eq:22}
\begin{aligned}
\mathrm{e}^{\mathrm{i}(\mathbf{k},\mathbf{r})}&=(2\pi)^{d/2}\sum_{\ell=0}^{\infty}
\mathrm{i}^{\ell}\frac{J_{\ell+(d-2)/2}(\lambda r)}{(\lambda r)^{(d-2)/2}}\\
&\quad\times\sum_{m=1}^{h(\ell,d)}Y^m_{\ell}(\theta_1,\dots,\theta_{d-2},\varphi)
Y^m_{\ell}(\hat{\theta}_1,\dots,\hat{\theta}_{d-2},\hat{\varphi}),
\end{aligned}
\end{equation}
see \cite{MR1220225}.

How to perform the last step and prove spectral expansions similar to \eqref{eq:8}? One uses a technical tool known as the \emph{Karhunen theorem},\index{Karhunen's theorem} \cite{MR23013}. Let $X$ be a set, and let $T(x)$ be a centred second order random field on $X$ taking values in a complex finite-dimensional linear space $V$ with norm $\|\cdot\|$ and real structure $j$. Let $\Lambda$ be another set, $\mathfrak{L}$ be a $\sigma$-field of subsets of $\Lambda$, $F$ be a measure defined on a measurable space $(\Lambda,\mathfrak{L})$ and taking values in the cone of Hermitian nonnegative-definite linear operators on $V$, $\mu$ be its trace measure, $\mu(A)=\tr F(A)$ for $A\in\mathfrak{L}$. Assume that the two-point correlation function of the random field $T(x)$ can be written in the following form
\begin{equation}\label{eq:21}
\langle T(x),T(y)\rangle=\int_{\Lambda}a(x,\lambda)a(y,\lambda)^*\,
\mathrm{d}F(\lambda),
\end{equation}
where the function $a\colon X\times\Lambda\to\mathbb{C}$ satisfies the condition
\[
\int_{\Lambda}|a(x_0,\lambda)|^2\,\mathrm{d}\mu(\lambda)<\infty,\qquad x_0\in X.
\]
Moreover, assume that the set of finite linear combinations of the functions from the family $\{\,f(x_0,\lambda)\colon x_0\in X\,\}$ is dense in the Hilbert space of $\mu$-square-integrable functions on $\Lambda$.

\begin{theorem}[K.~Karhunen]\label{th:Karhunen}
Under the above conditions, the random field $T(x)$ has the form
\[
T(x)=\int_{\Lambda}a(x,\lambda)\,\mathrm{d}Z(\lambda),
\]
where $Z$ is a measure on $(\Lambda,\mathfrak{L})$ taking values in the Hilbert space of centred $V$-valued random vectors $Y$ with $\mathsf{E}[\|Y\|^2]<\infty$. Moreover, the measure $F$ is the \emph{control measure} of $Z$ in the sense that for all $A$, $B\in\mathfrak{L}$ we have
\[
\mathsf{E}[jZ(A)\otimes Z(B)]=F(A\cap B).
\]
\end{theorem}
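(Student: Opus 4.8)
The plan is to reproduce, in the operator-valued setting, the classical Hilbert-space isometry behind Karhunen's theorem. First I would introduce two Hilbert spaces: the space $L^2(\Lambda,\mu)$ of $\mu$-square-integrable complex functions on $\Lambda$, and the space $\mathcal{K}\subseteq L^2_0(\Omega;V)$ obtained as the closed linear span of the random vectors $\{\,T(x)\colon x\in X\,\}$, equipped with the scalar inner product $\langle Y_1,Y_2\rangle_{\mathcal{K}}=\mathsf{E}[H(Y_1,Y_2)]$. The bridge between the two is obtained by taking the trace of the hypothesis~\eqref{eq:21}: since $\tr(j_V Y_1\otimes Y_2)=H(Y_1,Y_2)$ and $\mu=\tr F$, one gets
\[
\langle T(x),T(y)\rangle_{\mathcal{K}}=\int_{\Lambda}a(x,\lambda)\overline{a(y,\lambda)}\,\mathrm{d}\mu(\lambda).
\]
This identity says exactly that the assignment $a(x,\cdot)\mapsto T(x)$ is norm-preserving on $\operatorname{span}\{\,a(x,\cdot)\colon x\in X\,\}$; because the conjugation forced by the real structure $j$ enters $\langle\cdot,\cdot\rangle_{\mathcal{K}}$, the correct extension is \emph{conjugate-linear} rather than linear. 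Using the density hypothesis, this map extends by continuity to a conjugate-linear isometry $I$ of $L^2(\Lambda,\mu)$ onto $\mathcal{K}$.

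Next I would manufacture the random measure. For every $A\in\mathfrak{L}$ with $\mu(A)<\infty$ I set $Z(A):=I(\mathbf{1}_A)\in\mathcal{K}\subseteq L^2_0(\Omega;V)$, a centred $V$-valued random vector of finite second moment. Countable additivity of $Z$ in the mean-square sense follows from the corresponding additivity of $A\mapsto\mathbf{1}_A$ in $L^2(\Lambda,\mu)$ together with the boundedness of $I$. Declaring $\int_{\Lambda}g\,\mathrm{d}Z:=I(g)$ for $g\in L^2(\Lambda,\mu)$ gives a stochastic integral that agrees with $\sum_i c_i Z(A_i)$ on simple functions, and by construction $\int_{\Lambda}a(x,\lambda)\,\mathrm{d}Z(\lambda)=I(a(x,\cdot))=T(x)$, which is the desired expansion.

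It remains to identify $F$ as the control measure, for which I would pass back from the scalar trace to the full operator-valued correlation. The form $B(g,h):=\mathsf{E}[j\,I(g)\otimes I(h)]$ and the form $(g,h)\mapsto\int_{\Lambda}g(\lambda)\overline{h(\lambda)}\,\mathrm{d}F(\lambda)$ agree on the dense set $\{\,a(x,\cdot)\colon x\in X\,\}$ by the hypothesis~\eqref{eq:21}; both are continuous with respect to the $L^2(\Lambda,\mu)$ norm, the second because the Radon--Nikodym density $f=\mathrm{d}F/\mathrm{d}\mu$ takes values in the nonnegative-definite Hermitian operators of unit trace, hence of operator norm at most $1$. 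Continuity and density then force $B(g,h)=\int_{\Lambda}g\overline{h}\,\mathrm{d}F$ on all of $L^2(\Lambda,\mu)$, and evaluating at $g=\mathbf{1}_A$, $h=\mathbf{1}_B$ yields $\mathsf{E}[jZ(A)\otimes Z(B)]=F(A\cap B)$. I expect the main obstacle to be precisely the bookkeeping of the complex conjugations: the naive complex-linear correspondence $a(x,\cdot)\mapsto T(x)$ is \emph{not} an isometry, and it is the real structure $j$ that both fixes the conjugate-linear character of $I$ and, correspondingly, is built into the control-measure identity. The only other delicate point is the operator-norm bound on $f$, which is what upgrades the scalar isometry to the operator-valued conclusion.
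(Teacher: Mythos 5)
Your proposal cannot be compared against an in-paper argument, because the paper does not prove Theorem~\ref{th:Karhunen} at all: it states it as an imported classical tool, citing Karhunen's 1947 paper \cite{MR23013}. Judged on its own merits, your reconstruction is essentially the standard proof, correctly adapted to the operator-valued setting. The architecture is right: the trace reduction (using $\tr(jY_1\otimes Y_2)=H(Y_1,Y_2)$ and $\mu=\tr F$) turns the hypothesis~\eqref{eq:21} into a scalar reproducing identity; the density assumption yields an isometry of $L^2(\Lambda,\mu)$ onto the closed span $\mathcal{K}$ of the field; indicators of sets of finite $\mu$-measure define the vector-valued orthogonal measure $Z$; and the passage back from the trace identity to the full operator-valued control-measure identity via agreement of two continuous sesquilinear forms on a dense set is exactly the needed upgrade. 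Your observation that unit trace plus nonnegative-definiteness bounds the operator norm of $f=\mathrm{d}F/\mathrm{d}\mu$ by $1$ is the correct continuity estimate, and restricting $Z$ to sets of finite trace measure is the standard formulation.

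There is one internal slip, and it sits precisely in the conjugation bookkeeping you yourself flag as delicate. Having committed to a \emph{conjugate-linear} isometry $I$, you then declare $\int_{\Lambda}g\,\mathrm{d}Z:=I(g)$ and assert this agrees with $\sum_i c_iZ(A_i)$ on simple functions; but conjugate-linearity gives $I\bigl(\sum_i c_i\mathbf{1}_{A_i}\bigr)=\sum_i c_i^*Z(A_i)$, so your integral is conjugate-linear in the integrand and the claimed agreement fails for non-real coefficients. To be fully consistent you must either insert one more conjugation in the definition of the integral, or set up the isometry linearly by sending $T(x)$ to $a(x,\cdot)^*$ (the span of conjugates is dense whenever the original span is, since conjugation is an anti-unitary bijection of $L^2(\Lambda,\mu)$). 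You cannot be blamed too severely: with the correlation convention $\langle T(x),T(y)\rangle=\mathsf{E}[jT(x)\otimes T(y)]$, the paper's own pairing of Equation~\eqref{eq:21} (conjugate on $a(y,\lambda)$, i.e., the second slot) with the control identity $\mathsf{E}[jZ(A)\otimes Z(B)]=F(A\cap B)$ (conjugation acting on the first slot) carries the same mismatch, which is invisible in the paper's applications because there $a(\mathbf{x},\lambda)$ and the measures $Z^m_{\ell}$ are real. Once the conjugations are placed consistently, your argument is complete and is the proof one finds in the classical literature.
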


Note that Equation~\eqref{eq:5} does not have the form of \eqref{eq:21} and Theorem~\ref{th:Karhunen} cannot be applied. We use another general idea to overcome this difficulty. For a general dimension $d$, Equation~\eqref{eq:20} takes the form
\[
\langle T(\mathbf{x}),T(\mathbf{y})\rangle=\frac{\Gamma(d/2)}{2\pi^{d/2}}\int_{0}^{\infty}\int_{S^{d-1}}
\mathrm{e}^{\mathrm{i}(\mathbf{k},\mathbf{r})}\,\mathrm{d}S\,\mathrm{d}\Phi(\lambda),
\]
where $\mathrm{d}S$ is the Lebesgue measure on the sphere $S^{d-1}$. Recall that $\mathbf{r}=\mathbf{y}-\mathbf{x}$ and write down the integrand as $\mathrm{e}^{\mathrm{i}(\mathbf{k},\mathbf{r})}=\mathrm{e}^{-\mathrm{i}
(\mathbf{k},\mathbf{x})}\mathrm{e}^{\mathrm{i}(\mathbf{k},\mathbf{y})}$. Replace \emph{each} term with its absolutely converging expansion \eqref{eq:22} and calculate the inner integral. We obtain
\[
\begin{aligned}
\langle T(\mathbf{x}),T(\mathbf{y})\rangle&=2^{d-1}\Gamma(d/2)\pi^{d/2}\int_{0}^{\infty}
\sum_{\ell=0}^{\infty}\frac{J_{\ell+(d-2)/2}(\lambda r)}{(\lambda r)^{(d-2)/2}}\frac{J_{\ell+(d-2)/2}(\lambda r')}{(\lambda r')^{(d-2)/2}}\\
&\quad\times\sum_{m=1}^{h(\ell,d)}Y^m_{\ell}(\theta_1,\dots,\theta_{d-2},\varphi)
Y^{m}_{\ell}(\theta'_1,\dots,\theta'_{d-2},\varphi')\,\mathrm{d}\Phi(\lambda),
\end{aligned}
\]
where $(r,\theta_1,\dots,\theta_{d-2},\varphi)$ (resp. $(r',\theta'_1,\dots,\theta'_{d-2},\varphi')$) are the spherical coordinates of the point $\mathbf{x}$ (resp. $\mathbf{y}$). The above equation has the form \eqref{eq:21}, where $\Lambda$ is the disjoint union of countably many copies of the interval $[0,\infty)$ enumerated by the indices $\ell$ and $m$, the restriction of the function $a(\mathbf{x},\lambda)$ to the Cartesian product of $\mathbb{R}^d$ and the $(\ell,m)$th copy of $[0,\infty)$ has the form
\[
a(\mathbf{x},\lambda)=\left(2^{d-1}\Gamma(d/2)\pi^{d/2}\right)^{1/2}
\frac{J_{\ell+(d-2)/2}(\lambda r)}{(\lambda r)^{(d-2)/2}}Y^m_{\ell}(\theta_1,\dots,\theta_{d-2},\varphi),
\]
the restriction of the measure $\mathrm{d}F$ to the above copy is equal to $\Phi$, and $V=c\mathbb{R}^1$. Apply Theorem~\ref{th:Karhunen} and obtain Equation~\eqref{eq:8}. In this equation, the random field $T(\mathbf{x})$ is real-valued if and only if all measures $Z^m_{\ell}$ are real-valued.
\end{example}

Before proceeding to the next example, we describe some elements of the equivalence classes of irreducible orthogonal representations of the group $\mathrm{O}(3)$. Let $H_{\ell}(\mathbb{R}^3)$ and $H_{\ell*}(\mathbb{R}^3)$ be two copies of the real linear space of homogeneous polynomials of degree $\ell$ in three real variables that are harmonic (with null Laplacian). Assume that the group $\mathrm{O}(3)$ acts in the space $H_{\ell}(\mathbb{R}^3)$ by
\[
(g\cdot p)(\mathbf{x})=p(g^{-1}\mathbf{x}),\qquad g\in\mathrm{O}(3),\quad\mathbf{x}\in\mathbb{R}^3,
\]
and in the space $H_{\ell*}(\mathbb{R}^3)$ by
\[
(g\cdot p)(\mathbf{x})=\det(g)p(g^{-1}\mathbf{x}),\qquad g\in\mathrm{O}(3),\quad\mathbf{x}\in\mathbb{R}^3.
\]

First, any irreducible orthogonal representation of the group $\mathrm{O}(3)$ is equivalent to a unique representation in the above described class. Second, it is easy to see that the element $-I\in\mathrm{O}(3)$ acts by multiplication by $1$ in the spaces $H_{0}(\mathbb{R}^3)$, $H_{2}(\mathbb{R}^3)$, \dots, $H_{1*}(\mathbb{R}^3)$, $H_{3*}(\mathbb{R}^3)$, \dots, and acts by multiplication by $-1$ in the remaining spaces.

We denote by $U_{\ell+}$\index{representation!irreducible real of $\mathrm{O}(3)$!$U_{\ell+}$} (resp. $U_{\ell-}$)\index{representation!irreducible real of $\mathrm{O}(3)$!$U_{\ell-}$} a real linear space of dimension $2\ell+1$ where an irreducible representation $\theta^{U_{\ell+}}$ (resp. $\theta^{U_{\ell-}}$) with $\theta^{U_{\ell+}}(-I)x=x$ (resp. $\theta^{U_{\ell-}}(-I)x=-x$) acts. The \emph{Clebsch--Gordan rule}\index{Clebsch--Gordan rule} states that the representations $U_{\ell_1+}\otimes U_{\ell_2+}$ and $U_{\ell_1-}\otimes U_{\ell_2-}$ are equivalent to the direct sum of irreducible components $U_{|\ell_1-\ell_2|+}$, $U_{(|\ell_1-\ell_2|+1)+}$, \dots, $U_{(\ell_1+\ell_2)+}$. Similarly, the representations $U_{\ell_+}\otimes U_{\ell_2-}$ and $U_{\ell_1-}\otimes U_{\ell_2+}$ are equivalent to the direct sum of irreducible components $U_{|\ell_1-\ell_2|-}$, $U_{(|\ell_1-\ell_2|+1)-}$, \dots, $U_{(\ell_1+\ell_2)-}$.

The result \eqref{eq:6} is easy to prove. Indeed, Equation~\eqref{eq:3} states that the two-point correlation tensor of the random field that describes a turbulent fluid, is a form-invariant map for the pair $(U_{1-}\otimes U_{1-},U_{1-})$. To find the general form of the above tensor, we use the Wineman--Pipkin Theorem~\ref{th:WP}. It is well-known that the polynomial $I_1(\mathbf{x})=\|\mathbf{x}\|^2$ constitutes an integrity basis for polynomial invariants of the representation $U_{1-}$. To find an integrity basis for form-invariant polynomials of the pair $(U_{1-}\otimes U_{1-},U_{1-})$, we use the following result by Hermann Weyl\index{Weyl, Hermann} proved by him in 1939 in the first edition of \cite{MR1488158}.

\begin{theorem}[Hermann Weyl]
Any form-invariant polynomial of the group $\mathrm{O}(d)$ is a linear combination of products of Kronecker's deltas $\delta_{ij}$ and second degree homogeneous polynomials $r_ir_j$.
\end{theorem}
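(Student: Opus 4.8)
The plan is to deduce this covariant statement from the scalar \emph{First Fundamental Theorem} (FFT) of invariant theory for $\mathrm{O}(d)$, which asserts that every polynomial in several vector variables $\mathbf{v}_1,\dots,\mathbf{v}_n\in\mathbb{R}^d$ that is invariant under the diagonal action of $\mathrm{O}(d)$ is a polynomial in the pairwise inner products $(\mathbf{v}_a,\mathbf{v}_b)$. Here a \emph{form-invariant polynomial} is an equivariant polynomial tensor field $\mathbf{r}\mapsto T_{i_1\cdots i_p}(\mathbf{r})$ satisfying $T_{i_1\cdots i_p}(g\mathbf{r})=g_{i_1j_1}\cdots g_{i_pj_p}T_{j_1\cdots j_p}(\mathbf{r})$ for all $g\in\mathrm{O}(d)$, and the aim is to exhibit the universal tensorial skeleton of such a $T$.

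First I would convert equivariance into invariance by the standard polarisation device: introduce $p$ auxiliary vectors $\mathbf{a}^{(1)},\dots,\mathbf{a}^{(p)}\in\mathbb{R}^d$ and form the scalar $f(\mathbf{r},\mathbf{a}^{(1)},\dots,\mathbf{a}^{(p)})=T_{i_1\cdots i_p}(\mathbf{r})\,a^{(1)}_{i_1}\cdots a^{(p)}_{i_p}$. Since the factors $g_{i_kj_k}$ produced by the equivariance of $T$ are absorbed by the transformation of the $\mathbf{a}^{(k)}$, the function $f$ is $\mathrm{O}(d)$-invariant under the \emph{simultaneous} action on $\mathbf{r}$ and all $\mathbf{a}^{(k)}$; by construction it is homogeneous of degree $1$ in each $\mathbf{a}^{(k)}$. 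Applying the FFT, $f$ is a polynomial in the inner products $(\mathbf{r},\mathbf{r})$, $(\mathbf{r},\mathbf{a}^{(k)})$ and $(\mathbf{a}^{(j)},\mathbf{a}^{(k)})$.

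Next I would read off the structure forced by multilinearity. Because each $\mathbf{a}^{(k)}$ occurs to the first power, every monomial of $f$ uses each $\mathbf{a}^{(k)}$ exactly once, hence is a product of a power of $(\mathbf{r},\mathbf{r})$ together with factors $(\mathbf{r},\mathbf{a}^{(k)})=r_{i_k}a^{(k)}_{i_k}$ and factors $(\mathbf{a}^{(j)},\mathbf{a}^{(k)})=\delta_{i_ji_k}a^{(j)}_{i_j}a^{(k)}_{i_k}$. Stripping off the auxiliary vectors (equivalently, taking the coefficient of $a^{(1)}_{i_1}\cdots a^{(p)}_{i_p}$) returns $T_{i_1\cdots i_p}(\mathbf{r})$ as a linear combination, with coefficients polynomial in $\|\mathbf{r}\|^2=(\mathbf{r},\mathbf{r})$, of products of Kronecker deltas $\delta_{i_ji_k}$ (from the paired auxiliary vectors) and coordinate factors $r_{i_l}$ (from those paired with $\mathbf{r}$). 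A parity count shows that the number of lone factors $r_{i_l}$ equals $p$ modulo $2$, so for the even-rank tensors relevant here---in particular the rank-$2$ correlation tensor of the pair $(U_{1-}\otimes U_{1-},U_{1-})$, which yields precisely $\delta_{ij}$ and $r_ir_j$---these factors group into the second-degree homogeneous polynomials $r_ir_j$, as claimed.

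The genuine obstacle is the FFT itself. Its geometric core is that $\mathrm{O}(d)$ acts transitively on tuples of vectors sharing a common Gram matrix, so that any invariant factors through the Gram map $(\mathbf{v}_1,\dots,\mathbf{v}_n)\mapsto\bigl((\mathbf{v}_a,\mathbf{v}_b)\bigr)$; promoting this set-theoretic statement to the assertion that the dependence is \emph{polynomial}, with the inner products generating the entire invariant ring, is the substance of Weyl's theorem. I would establish it by the unitarian trick---averaging an arbitrary invariant over the compact group $\mathrm{O}(d)$ with its Reynolds operator and reducing, via polarisation and restitution, to multilinear invariants, which correspond to $\mathrm{O}(d)$-invariant tensors and are spanned by complete index pairings, i.e.\ by products of the metric---or simply invoke \cite{MR1488158} directly, as the survey does.
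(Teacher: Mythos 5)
Your proposal is correct, but it is worth noting that the paper itself contains no proof of this theorem at all: the statement is quoted as a classical result with a bare citation to Weyl's \emph{The Classical Groups} \cite{MR1488158}, and is then used only through its rank-$2$ instance, the integrity basis \eqref{eq:27} feeding into Theorem~\ref{th:WP}. What you supply instead is the standard reduction of the covariant statement to the scalar First Fundamental Theorem, and your execution of that reduction is sound: contracting $T_{i_1\cdots i_p}(\mathbf{r})$ against auxiliary vectors turns equivariance into invariance of a function multilinear in the $\mathbf{a}^{(k)}$; multilinearity rules out self-pairings $(\mathbf{a}^{(k)},\mathbf{a}^{(k)})$ and forces every monomial to be a power of $(\mathbf{r},\mathbf{r})$ times factors $(\mathbf{r},\mathbf{a}^{(k)})$ and $(\mathbf{a}^{(j)},\mathbf{a}^{(k)})$; stripping the auxiliary vectors yields products of $\delta_{i_ji_k}$ and lone factors $r_{i_l}$. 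Your route buys two things the citation does not. First, it exposes the parity constraint: the number of lone $r_{i_l}$ factors is congruent to $p$ modulo $2$, so the theorem as printed is literally false for odd rank (already $T_i(\mathbf{r})=r_i$ is form-invariant but is no combination of deltas and $r_ir_j$'s); your explicit restriction to even rank, which covers everything the paper uses, quietly repairs this. Second, it isolates exactly where the depth lies, namely in the FFT itself --- and there your sketch is the one thin spot: the assertion that multilinear $\mathrm{O}(d)$-invariants are spanned by complete index pairings \emph{is} the tensorial form of the FFT, not a consequence of the Reynolds-operator and polarisation--restitution remarks, which only reduce the problem to the multilinear case; Weyl's own proof of the spanning claim requires the Capelli identity (modern substitutes being Brauer-algebra duality or an induction exploiting transitivity of $\mathrm{O}(d)$ on tuples with a given Gram matrix). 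Since you explicitly allow invoking \cite{MR1488158} at precisely that point --- which is all the survey does for the entire theorem --- the argument as a whole stands.
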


In particular, the matrix-valued polynomials \eqref{eq:27} form an integrity basis for form-invariant polynomials of degree not more than $2$. Equation~\eqref{eq:6} immediately follows from Theorem~\ref{th:WP}.

\begin{example}\label{ex:2}
Let $U=U_{1-}$, that is, $U(g)=g$, $g\in\mathrm{O}(3)$. The Clebsch--Gordan rule gives $U_{1-}\otimes U_{1-}\sim U_{0+}\oplus U_{1+}\oplus U_{2+}$, where the symbol $\sim$ means the equivalence of representations. The representation $\tilde{U}=\mathsf{S}^2(U_{1-})$ is
\begin{equation}\label{eq:23}
\mathsf{S}^2(U_{1-})\sim U_{0+}\oplus U_{2+}.
\end{equation}

The second step becomes nontrivial and requires a new concept. There is a basis in the spaces $U_{\ell\pm}$ described in \cite{MR1888117}, denote its elements by $\{\,\mathbf{e}^{\ell}_m\colon -\ell\leq m\leq\ell\,\}$. The \emph{Godunov--Gordienko coefficients}\index{Godunov--Gordienko coefficients} $g^{m[m_1,m_2]}_{\ell[\ell_1,\ell_2]}$ are defined by
\begin{equation}\label{eq:30}
\mathbf{e}^{\ell}_m\sim\sum_{m_1=-\ell_1}^{\ell_1}\sum_{m_2=-\ell_2}^{\ell_2}
g^{m[m_1,m_2]}_{\ell[\ell_1,\ell_2]}\mathbf{e}^{\ell_1}_{m_1}\otimes\mathbf{e}^{\ell_2}_{m_2},
\end{equation}
that is, under the equivalence of representations stated in the Clebsch--Gordan rule, the basis vector $\mathbf{e}^{\ell}_m$ is mapping to the matrix in the right hand side. These coefficients were calculated in \cite{MR2078714}. They are \emph{not} the same as classical Clebsch--Gordan coefficients for unitary representations of the groups $\mathrm{SU}(2)$ and $\mathrm{SO}(3)$ known from quantum mechanics.

In particular, the basis vector $\mathbf{e}^0_0$ in the space $U_{0+}$ in the right hand side of Equation~\eqref{eq:23} is mapping to the matrix $g^0_{0[1,1]}$ with matrix entries $g^{0[m_1,m_2]}_{0[1,1]}$. The algorithm for calculating the Godunov--Gordienko coefficients is described in \cite{MR3308053}, see also the properties of matrices $g^m_{\ell[\ell_1,\ell_2]}$ in \cite{MR3783870}. The algorithm gives $g^0_{0[1,1]}=\frac{1}{\sqrt{3}}\left(
\begin{smallmatrix}
1 & 0 & 0 \\
0 & 1 & 0 \\
0 & 0 & 1
\end{smallmatrix}
\right)$.

Similarly, the basis vector $\mathbf{e}^2_0$ in the space $U_{2+}$ is mapping to the matrix $g^0_{2[1,1]}=\frac{1}{\sqrt{6}}\left(
\begin{smallmatrix}
-1 & 0 & 0 \\
0 & 2 & 0 \\
0 & 0 & -1
\end{smallmatrix}
\right)$.

The linear slice $\tilde{U}_0$ is the linear space generated by the matrices $g^0_{0[1,1]}$ and $g^0_{2[1,1]}$.

Similarly, the linear slice $\tilde{U}_1$ is the linear space generated by the matrix $g^0_{0[1,1]}$.

At the third step, an easy application of the Sylvester Theorem shows that a matrix in $\tilde{U}_0$ is nonnegative-definite and has unit trace if and only if it has the form $\frac{1}{\sqrt{3}}g^0_{0[1,1]}+ug^0_{2[1,1]}$ with $-\frac{1}{\sqrt{6}}\leq u\leq\frac{\sqrt{2}}{\sqrt{3}}$. The convex compact set $\mathcal{C}_0\subset\mathsf{S}^2(\mathbb{R}^3)$ is the interval with extreme points $A_1=\frac{1}{2}\left(
\begin{smallmatrix}
  1 & 0 & 0 \\
  0 & 0 & 0 \\
  0 & 0 & 1
\end{smallmatrix}
\right)$ and $A_2=\left(\begin{smallmatrix}
  0 & 0 & 0 \\
  0 & 1 & 0 \\
  0 & 0 & 0
\end{smallmatrix}
\right)$.

A matrix in $\tilde{U}_1$ is nonnegative-definite and has unit trace if and only if it is equal to $\frac{1}{\sqrt{3}}g^0_{0[1,1]}$. The convex compact set $\mathcal{C}_1$ is a singleton $\{\frac{1}{\sqrt{3}}g^0_{0[1,1]}\}$.

At the fourth step, we introduce the \emph{barycentric coordinates}\index{barycentric coordinates} $u_1(\lambda)$ and $u_2(\lambda)$ of the matrix $f(\lambda,0,0)$ with respect to the simplex $\mathcal{C}_0$, that is,
\[
f(\lambda,0,0)=u_1(\lambda)A_1+u_2(\lambda)A_2,\qquad\lambda>0.
\]
In terms of the basis matrices $g^0_{0[1,1]}$ and $g^0_{2[1,1]}$ this equation becomes
\[
f(\lambda,0,0)=\frac{1}{\sqrt{3}}(u_1(\lambda)+u_2(\lambda))g^0_{0[1,1]}
+\frac{1}{\sqrt{6}}(-u_1(\lambda)+2u_2(\lambda))g^0_{2[1,1]}.
\]
Observe that $u_1(\lambda)+u_2(\lambda)=1$. When $\lambda=0$, we have $f(0,0,0)\in\mathcal{C}_1$, that is, $f(0,0,0)=\frac{1}{3}\delta_{ij}$. The above equation holds true for $\lambda=0$ if and only if
\begin{equation}\label{eq:24}
u_1(0)=\frac{2}{3},\qquad u_2(0)=\frac{1}{3}.
\end{equation}

Let $\theta_{ij}^{\tilde{U}}(g)$ be the matrix entries of the representation $\theta^{\tilde{U}}$, $g\in\mathrm{O}(3)$. Equation~\eqref{eq:16} gives
\[
f_{ij}(\lambda,\hat{\theta},\hat{\varphi})=\frac{1}{3}\delta_{ij}
\theta_{00}^{U_{0+}}(\hat{\theta},\hat{\varphi})
+\frac{1}{\sqrt{6}}(-u_1(\lambda)+2u_2(\lambda))\sum_{m=-2}^{2}g^{m[i,j]}_{2[1,1]}
\theta_{m0}^{U_{2+}}(\hat{\theta},\hat{\varphi}).
\]
Equation~\eqref{eq:11} takes the form
\begin{equation}\label{eq:29}
\begin{aligned}
\langle T_i(\mathbf{x}),T_j(\mathbf{y})\rangle&=\frac{1}{4\pi}\int_{0}^{\infty}\int_{S^2}
\mathrm{e}^{\mathrm{i}(\mathbf{k},\mathbf{r})}\left[\frac{1}{3}\delta_{ij}
\theta_{00}^{U_{0+}}(\hat{\theta},\hat{\varphi})\right.\\
&\quad+\left.\frac{1}{\sqrt{6}}(-u_1(\lambda)+2u_2(\lambda))\sum_{m=-2}^{2}g^{m[i,j]}_{2[1,1]}
\theta_{m0}^{U_{2+}}(\hat{\theta},\hat{\varphi})\right]\,\mathrm{d}S\,\mathrm{d}\Phi(\lambda).
\end{aligned}
\end{equation}

At the fifth step, we use the following. It is well-known that $\theta_{m0}^{U_{\ell+}}(\hat{\theta},\hat{\varphi})=\frac{2\sqrt{\pi}}{\sqrt{2\ell+1}}S^m_{\ell}(\hat{\theta},\hat{\varphi})$. Using the Rayleigh expansion, we calculate the inner integral and obtain
\[
\begin{aligned}
T_i(\mathbf{x}),T_j(\mathbf{y})\rangle&=\int_{0}^{\infty}
\left[\frac{1}{3}(u_1(\lambda)+u_2(\lambda))j_0(\lambda r)\delta_{ij}\right.\\
&\quad+\left.\frac{1}{\sqrt{6}}(u_1(\lambda)-2u_2(\lambda))j_2(\lambda r)\sum_{m=-2}^{2}g^{m[i,j]}_{2[1,1]}
\theta_{m0}^{U_{2+}}(\theta,\varphi)\right]\,\mathrm{d}\Phi(\lambda).
\end{aligned}
\]
Define the measures $\Phi_1$ and $\Phi_2$ by $\mathrm{d}\Phi_k(\lambda)=u_k(\lambda)\,\mathrm{d}\Phi(\lambda)$ and group the terms with the same value of $k$ together. The two-point correlation tensor takes the form
\begin{equation}\label{eq:26}
\begin{aligned}
T_i(\mathbf{x}),T_j(\mathbf{y})\rangle&=\int_{0}^{\infty}
\left[\frac{1}{3}j_0(\lambda r)\delta_{ij}+\frac{1}{\sqrt{6}}j_2(\lambda r)\sum_{m=-2}^{2}g^m_{2[1,1]}
\theta_{m0}^{U_{2+}}(\theta,\varphi)\right]\,\mathrm{d}\Phi_1(\lambda)\\
&\quad+\int_{0}^{\infty}\left[\frac{1}{3}j_0(\lambda r)\delta_{ij}-\frac{\sqrt{2}}{\sqrt{3}}j_2(\lambda r)\sum_{m=-2}^{2}g^m_{2[1,1]}
\theta_{m0}^{U_{2+}}(\theta,\varphi)\right]\,\mathrm{d}\Phi_2(\lambda).
\end{aligned}
\end{equation}

It follows from Equation~\eqref{eq:24} that $\Phi_1(\{0\})=2\Phi_2(\{0\})$. At a first glance, this contradicts condition~\eqref{eq:25}. However, it is not so, see Remark~\ref{rem:1}. This fact was also mentioned in \cite{MR0094844}:
\begin{quote}
\dots without loss of generality, we can always require that the jumps at zero of the functions $\Phi_1(\lambda)$ and $\Phi_2(\lambda)$ be equal\dots; moreover, we can also require that that the jump at zero of one of the functions $\Phi_1(\lambda)$, $\Phi_2(\lambda)$ be zero.
\end{quote}

A more serious problem is that Equation~\eqref{eq:26} contradicts Equation~\eqref{eq:6}. To solve this issue, denote
\[
M^1_{ij}(\mathbf{r})=g^0_{0[1,1]},\qquad M^2_{ij}(\mathbf{r})=r^2\sum_{m=-2}^{2}g^m_{2[1,1]}
\theta_{m0}^{U_{2+}}(\theta,\varphi).
\]
It is well-known that the matrix entries $\theta_{m0}^{U_{\ell+}}(\theta,\varphi)$ are the restrictions to the sphere $S^2$ of homogeneous harmonic polynomials of degree~$\ell$ in three variables given by $r^{\ell}\theta_{m0}^{U_{\ell+}}(\theta,\varphi)$. In particular, $M^1_{ij}(\mathbf{r})$ and $M^2_{ij}(\mathbf{r})$ are form-invariant polynomials. By definition of the integrity basis, they must express through the functions $L^1_{ij}(\mathbf{r})$ and $L^2_{ij}(\mathbf{r})$ given by Equation~\eqref{eq:27}. Indeed, we have
\[
M^1_{ij}(\mathbf{r})=\frac{1}{\sqrt{3}}L^1_{ij}(\mathbf{r}),\qquad M^2_{ij}(\mathbf{r})
=-\frac{1}{\sqrt{6}}L^1_{ij}(\mathbf{r})+\frac{\sqrt{3}}{\sqrt{2}}L^2_{ij}(\mathbf{r}).
\]
The first equality is obvious. Proof of the second one can be found in \cite[p.~149]{MR3930601}. Insert these values to Equation~\eqref{eq:26}. We obtain
\[
\begin{aligned}
T_i(\mathbf{x}),T_j(\mathbf{y})\rangle&=\int_{0}^{\infty}
\left[\frac{1}{6}(2j_0(\lambda r)-j_2(\lambda r))\delta_{ij}+\frac{1}{2}j_2(\lambda r)\frac{r_ir_j}{r^2}\right]\,\mathrm{d}\Phi_1(\lambda)\\
&\quad+\int_{0}^{\infty}\left[\frac{1}{3}(j_0(\lambda r)+j_2(\lambda r))\delta_{ij}+j_2(\lambda r)\frac{r_ir_j}{r^2}\right]\,\mathrm{d}\Phi_2(\lambda).
\end{aligned}
\]
By comparing these equation with \eqref{eq:6} one can calculate the longitudinal and transversal correlation functions and prove that they coincide with the functions~\eqref{eq:28} up to a constant.

At the last step, to find the spectral expansion of the random field $T(\mathbf{x})$, we apply the idea of Example~\ref{ex:1} to Equation~\eqref{eq:29}. But this time we have to calculate the integral over $S^2$ of the product of \emph{three} spherical harmonics. Observe that Equation~\eqref{eq:30} defines the Godunov--Gordienko coefficients $g^{m[m_1,m_2]}_{\ell[\ell_1,\ell_2]}$ only for nonnegative values of $\ell_1$, $\ell_2$, and $\ell$, satisfying the Clebsch--Gordan rule
\[
|\ell_1-\ell_2|\leq\ell\leq\ell_1+\ell_2.
\]
Put $g^{m[m_1,m_2]}_{\ell[\ell_1,\ell_2]}=0$, if this rule is broken.

\begin{lemma}\label{lem:Gaunt}
We have
\[
\int_{S^2}Y^{m_1}_{\ell_1}(\hat{\theta},\hat{\varphi})Y^{m_2}_{\ell_2}(\hat{\theta},\hat{\varphi})
Y^{m_3}_{\ell_3}(\hat{\theta},\hat{\varphi})\mathrm{d}S
=\frac{\sqrt{(2\ell_1+1)(2\ell_2+1)}}{\sqrt{4\pi(2\ell_3+1)}}
g^{m_3[m_1,m_2]}_{\ell_3[\ell_1,\ell_2]}g^{0[0,0]}_{\ell_3[\ell_1,\ell_2]}.
\]
\end{lemma}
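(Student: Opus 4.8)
The plan is to read the triple integral as the value of an $\mathrm{SO}(3)$-invariant trilinear form on the irreducible spaces $U_{\ell_1+}$, $U_{\ell_2+}$, $U_{\ell_3+}$, evaluated on basis vectors, and then to compute it explicitly by lifting the integral from $S^2$ to the group $\mathrm{SO}(3)$ and invoking Schur orthogonality (the Peter--Weyl / Fine Structure Theorem). The bridge between the two pictures is the relation recalled above, $\theta^{U_{\ell+}}_{m0}(\hat{\theta},\hat{\varphi})=\frac{2\sqrt{\pi}}{\sqrt{2\ell+1}}Y^m_{\ell}(\hat{\theta},\hat{\varphi})$, which exhibits each real spherical harmonic as a rescaled matrix entry of an irreducible orthogonal representation. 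Because $\mathbf{e}^{\ell}_0$ is fixed by the subgroup $\mathrm{SO}(2)$, the entry $\theta^{U_{\ell+}}_{m0}$ descends to a function on $\mathrm{SO}(3)/\mathrm{SO}(2)=S^2$, and the push-forward of normalised Haar measure $\mathrm{d}g$ to $S^2$ is $\frac{1}{4\pi}\,\mathrm{d}S$; this is what converts group integrals back into integrals over the sphere.

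The key step is a real Clebsch--Gordan product formula for matrix entries. The defining relation~\eqref{eq:30} says precisely that the Godunov--Gordienko matrix $\bigl[g^{m[m_1,m_2]}_{\ell[\ell_1,\ell_2]}\bigr]$ is the (orthogonal) change of basis realising the equivalence $U_{\ell_1+}\otimes U_{\ell_2+}\sim\bigoplus_{\ell}U_{\ell+}$ of the Clebsch--Gordan rule. Conjugating the tensor-product representation $\theta^{U_{\ell_1+}}(g)\otimes\theta^{U_{\ell_2+}}(g)$ by this orthogonal matrix block-diagonalises it, and reading off a single entry yields
\[
\theta^{U_{\ell_1+}}_{m_1p_1}(g)\,\theta^{U_{\ell_2+}}_{m_2p_2}(g)
=\sum_{\ell_3,m_3,p_3}g^{m_3[m_1,m_2]}_{\ell_3[\ell_1,\ell_2]}
g^{p_3[p_1,p_2]}_{\ell_3[\ell_1,\ell_2]}\theta^{U_{\ell_3+}}_{m_3p_3}(g),
\]
where the $\ell_3$ sum ranges over the values permitted by the Clebsch--Gordan rule (the convention that $g=0$ outside this range makes the formula hold verbatim). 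Setting $p_1=p_2=0$ specialises this to a combination of the entries $\theta^{U_{\ell_3+}}_{m_3p_3}$ weighted by $g^{m_3[m_1,m_2]}_{\ell_3[\ell_1,\ell_2]}g^{p_3[0,0]}_{\ell_3[\ell_1,\ell_2]}$.

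With this in hand the computation is mechanical. Multiply the specialised product by $\theta^{U_{\ell_3+}}_{m_3'0}(g)$ and integrate over $\mathrm{SO}(3)$; the Schur orthogonality relation $\int_{\mathrm{SO}(3)}\theta^{U_{\ell}}_{ij}(g)\theta^{U_{\ell'}}_{kl}(g)\,\mathrm{d}g=\frac{1}{2\ell+1}\delta_{\ell\ell'}\delta_{ik}\delta_{jl}$ collapses the triple sum to the single term $p_3=0$, $m_3=m_3'$, giving $\int_{\mathrm{SO}(3)}\theta^{U_{\ell_1+}}_{m_10}\theta^{U_{\ell_2+}}_{m_20}\theta^{U_{\ell_3+}}_{m_30}\,\mathrm{d}g=\frac{1}{2\ell_3+1}g^{m_3[m_1,m_2]}_{\ell_3[\ell_1,\ell_2]}g^{0[0,0]}_{\ell_3[\ell_1,\ell_2]}$. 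Now rewrite each entry via $\theta^{U_{\ell+}}_{m0}=\frac{2\sqrt{\pi}}{\sqrt{2\ell+1}}Y^m_{\ell}$ and replace $\int_{\mathrm{SO}(3)}(\cdot)\,\mathrm{d}g$ by $\frac{1}{4\pi}\int_{S^2}(\cdot)\,\mathrm{d}S$; the accumulated constant $(2\sqrt{\pi})^3/(4\pi)=2\sqrt{\pi}=\sqrt{4\pi}$ combines with the factor $1/(2\ell_3+1)$ and the three $\sqrt{2\ell_i+1}$ to produce exactly $\sqrt{(2\ell_1+1)(2\ell_2+1)}/\sqrt{4\pi(2\ell_3+1)}$, which is the claimed prefactor.

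I expect the only genuine obstacle to be justifying the product formula, that is, that \eqref{eq:30} really encodes an \emph{orthogonal} block-diagonalisation of the tensor product; this rests on the two bases being orthonormal and the representations being real orthogonal, so that the Godunov--Gordienko matrix is orthogonal and its transpose is its inverse. The remaining points are bookkeeping: that the $\theta^{U_{\ell+}}_{m0}$ descend to $S^2$ and that the Haar/Lebesgue normalisations match (checked against the orthonormality $\int_{S^2}(Y^m_{\ell})^2\,\mathrm{d}S=1$), and that working over $\mathrm{SO}(3)$ rather than $\mathrm{O}(3)$ loses nothing, since each $U_{\ell+}$ stays irreducible upon restriction. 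A more structural alternative would be to first note by Schur's Lemma that the invariant trilinear form is unique up to scale, hence equals $C_{\ell_1\ell_2\ell_3}\,g^{m_3[m_1,m_2]}_{\ell_3[\ell_1,\ell_2]}$, and then to fix the reduced constant $C_{\ell_1\ell_2\ell_3}$ by the zonal special case $m_1=m_2=m_3=0$, where the integral becomes a triple product of Legendre polynomials; but the Peter--Weyl route above delivers the constant directly and is cleaner.
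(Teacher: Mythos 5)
Your proof is correct and is essentially the argument the paper delegates to the literature: its own ``proof'' merely cites the complex Gaunt integral and states that the real case goes through in exactly the same way, and that standard argument is precisely your route --- the Clebsch--Gordan product formula for matrix entries, with the Godunov--Gordienko matrix as the orthogonal intertwiner, followed by Schur orthogonality, the normalisations matching through $\theta^{U_{\ell+}}_{m0}=\frac{2\sqrt{\pi}}{\sqrt{2\ell+1}}Y^m_{\ell}$ and the constant $(2\sqrt{\pi})^3/(4\pi)=\sqrt{4\pi}$ coming out as you computed. The one point you flag --- that Equation~\eqref{eq:30} encodes an \emph{orthogonal} block-diagonalisation, and that Schur orthogonality holds with the factor $\frac{1}{2\ell+1}$ --- is indeed the only thing needing justification, and it holds because the representations $U_{\ell+}$ are of real type (absolutely irreducible, remaining irreducible on restriction to $\mathrm{SO}(3)$) and the Godunov--Gordienko matrix maps orthonormal bases to orthonormal bases, so nothing is missing.
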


\begin{proof}
The complex counterpart of this Lemma is called the \emph{Gaunt integral}\index{Gaunt integral} after \cite{doi:10.1098/rspa.1929.0037}. The real version is proved in exactly the same way, see the proof of the complex counterpart in \cite{MR2840154}.
\end{proof}

After double insertion of the Rayleigh expansion to Equation~\eqref{eq:29} and using Lemma~\ref{lem:Gaunt}, we obtain
\[
\begin{aligned}
\langle T_i(\mathbf{x}),T_j(\mathbf{y})\rangle&=\sum_{n=1}^{2}\sum_{\ell,\ell'=0}^{\infty}
\sum_{m=-\ell}^{\ell}\sum_{m'=-\ell'}^{\ell'}
C^{mm'}_{n\ell\ell'ij}S^m_{\ell}(\theta,\varphi)S^{m'}_{\ell'}(\theta',\varphi')\\
&\quad\times\int_{0}^{\infty}j_{\ell}(\lambda r)j_{\ell'}(\lambda r')\,\mathrm{d}\Phi_n(\lambda),
\end{aligned}
\]
where
\[
\begin{aligned}
C^{mm'}_{1\ell\ell'ij}&=4\pi\mathrm{i}^{\ell'-\ell}\sqrt{(2\ell+1)(2\ell'+1)}
\left(\frac{1}{3}\delta_{ij}g^{0[m,m']}_{0[\ell,\ell']}g^{0[0,0]}_{0[\ell,\ell']}\right.\\
&\quad\left.-\frac{1}{5\sqrt{6}}g^{0[0,0]}_{2[\ell,\ell']}\sum_{k=-2}^{2}
g^{k[i,j]}_{2[1,1]}g^{k[m,m']}_{2[\ell,\ell']}\right),\\
C^{mm'}_{2\ell\ell'ij}&=4\pi\mathrm{i}^{\ell'-\ell}\sqrt{(2\ell+1)(2\ell'+1)}
\left(\frac{1}{3}\delta_{ij}g^{0[m,m']}_{0[\ell,\ell']}g^{0[0,0]}_{0[\ell,\ell']}\right.\\
&\quad\left.+\frac{\sqrt{2}}{5\sqrt{3}}g^{0[0,0]}_{2[\ell,\ell']}\sum_{k=-2}^{2}
g^{k[i,j]}_{2[1,1]}g^{k[m,m']}_{2[\ell,\ell']}\right).
\end{aligned}
\]

The Karhunen Theorem gives
\[
T_i(r,\theta,\varphi)=\sum_{n=1}^{2}\sum_{\ell=0}^{\infty}
\sum_{m=-\ell}^{\ell}S^m_{\ell}(\theta,\varphi)\int_{0}^{\infty}j_{\ell}(\lambda r)\,\mathrm{d}Z^m_{ni\ell}(\lambda),
\]
where
\[
\mathsf{E}[Z^m_{ni\ell}(A)Z^{m'}_{n'j\ell'}(B)]=\delta_{nn'}C^{mm'}_{n\ell\ell'ij}
\Phi_n(A\cap B).
\]
\end{example}

\begin{example}
Let $U=\mathsf{S}^2(U_{1-})=U_{0+}\oplus U_{2+}$. The Clebsch--Gordan formula gives $\mathsf{S}^2(U)=2U_{0+}\oplus 2U_{2+}\oplus U_{4+}$.

It turns out that the real linear space $\tilde{U_0}$ is $5$-dimensional, while $\tilde{U}_1$ is $2$-dimensional. For details, we refer to \cite{MR3493458} and \cite[Section~3.6]{MR3930601}

A new phenomenon appears at the third step. After a reordering of the basis tensors, the matrix $f(\lambda_0)$ contains two pairs of identical $1\times 1$ diagonal blocks and one $2\times 2$ block. Accordingly, the set of extreme points of the set $\mathcal{C}_0$ consists of three connected components: two singletons $D^1$ and $D^2$ \emph{and an ellipse} $D$. The set $\mathcal{C}_1$ is an interval with two extreme points.

The matrix $f(\lambda_0)$ has the form
\[
f(\lambda_0)=u_1(\lambda_0)D^1+u_2(\lambda_0)D^2+(u_3(\lambda_0)+u_4(\lambda_0))
D(\lambda_0),\qquad\lambda_0>0,
\]
where $D(\lambda_0)$ is an arbitrary measurable function on $(0,\infty)$ that takes values in the closed convex span of the ellipse $D$. This equation remains true for $\lambda=0$ if and only if
\begin{equation}\label{eq:19}
u_2(0)=\frac{3}{2}u_1(0),\qquad\frac{2}{7}\leq u_3(0)+u_4(0)\leq 1.
\end{equation}
This result follows from the analysis of the position of the convex compact $\mathcal{C}_1$ inside $\mathcal{C}_0$, like in Example~\ref{ex:2}.

Calculating the inner integral, we obtain a long formula, see \cite[Equation~(3.80)]{MR3930601}, written in terms of five $M$-functions
\[
\begin{aligned}
M^1_{ijkl}(\mathbf{r})&=g^{0[i,j]}_{0[1,1]}\otimes g^{0[k,l]}_{0[1,1]},\\
M^2_{ijkl}(\mathbf{r})&=\sum_{m_1,m_2=-2}^{2}g^{0[m_1,m_2]}_{0[2,2]}g^{m_1[i,j]}_{2[1,1]}
\otimes g^{m_2[k,l]}_{2[1,1]},\\
M^3_{ijkl}(\mathbf{r})&=\frac{r^2}{\sqrt{2}}\sum_{m=-2}^{2}\left(g^{0[i,j]}_{0[1,1]}\otimes g^{m[k,l]}_{2[1,1]}+g^{m[k,l]}_{2[1,1]}\otimes(g^{0[i,j]}_{0[1,1]}\right)\theta_{m0}^{U_{2+}}(\hat{\theta},\hat{\varphi}),\\
M^4_{ijkl}(\mathbf{r})&=r^2\sum_{m,m_1,m_2=-2}^{2}g^{m[m_1,m_2]}_{2[2,2]}g^{m_1[i,j]}_{2[1,1]}
\otimes g^{m_2[k,l]}_{2[1,1]}\theta_{m0}^{U_{2+}}(\hat{\theta},\hat{\varphi}),\\
M^5_{ijkl}(\mathbf{r})&=r^4\sum_{m=-4}^{4}\sum_{m_1,m_2=-2}^{2}g^{0[m_1,m_2]}_{4[2,2]}g^{m_1[i,j]}_{2[1,1]}
\otimes g^{m_2[k,l]}_{2[1,1]}\theta_{m0}^{U_{4+}}(\hat{\theta},\hat{\varphi}).
\end{aligned}
\]
This result does not contradict Equation~\eqref{eq:31}. Indeed, the functions $M^n_{ijkl}(\mathbf{r})$ are form-invariant polynomials and must express through the basis polynomials $L^n_{ijkl}(\mathbf{r})$. They have the form
\[
\begin{aligned}
M^1_{ijkl}(\mathbf{r})&=\frac{1}{3}L^1_{ijkl}(\mathbf{r}),\\
M^2_{ijkl}(\mathbf{r})&=-\frac{1}{3\sqrt{5}}L^1_{ijkl}(\mathbf{r})
+\frac{1}{2\sqrt{5}}L^2_{ijkl}(\mathbf{r}),\\
M^3_{ijkl}(\mathbf{r})&=-\frac{1}{3}L^1_{ijkl}(\mathbf{r})
+\frac{1}{2}L^4_{ijkl}(\mathbf{r}),\\
M^4_{ijkl}(\mathbf{r})&=\frac{2\sqrt{2}}{3\sqrt{7}}L^1_{ijkl}(\mathbf{r})
-\frac{1}{\sqrt{14}}L^2_{ijkl}(\mathbf{r})+\frac{3}{2\sqrt{14}}L^3_{ijkl}(\mathbf{r})
-\frac{\sqrt{2}}{\sqrt{7}}L^4_{ijkl}(\mathbf{r}),\\
M^5_{ijkl}(\mathbf{r})&=\frac{1}{2\sqrt{70}}L^1_{ijkl}(\mathbf{r})
+\frac{1}{2\sqrt{70}}L^1_{ijkl}(\mathbf{r})-\frac{\sqrt{5}}{2\sqrt{14}}L^3_{ijkl}(\mathbf{r})
-\frac{\sqrt{5}}{2\sqrt{14}}L^4_{ijkl}(\mathbf{r})\\
&\quad+\frac{\sqrt{35}}{2\sqrt{2}}L^5_{ijkl}(\mathbf{r}),
\end{aligned}
\]
see \cite{MR3493458} or \cite[Equation~(3.81)]{MR3930601}. The two-point correlation tensor of the random field $T(\mathbf{x})$ takes the form
\begin{equation}\label{eq:32}
\langle T_{ij}(\mathbf{x}),T_{kl}(\mathbf{y})=\sum_{n=1}^{3}\int_{0}^{\infty}
\sum_{q=1}^{5}N_{nq}(\lambda,r)L^q_{ijkl}(\mathbf{r})\,\mathrm{d}\Phi_n(\lambda),
\end{equation}
where the functions $N_{nq}(\lambda,r)$ with $q\in\{1,2\}$ correspond to the singletons $D^1$ and $D^2$ and are linear combinations of spherical Bessel functions. The functions $N_{nq}(\lambda,r)$ include also the component $D(\lambda)$, because they correspond to the ellipse~$D$, see \cite[Table~3.1]{MR3930601}. It follows from Equation~\eqref{eq:19} that an eventual atom $\Phi_3(\{0\})$ occupies at least $\frac{2}{7}$ of the sum of all three atoms, while the rest is divided between $\Phi_1(\{0\})$ and $\Phi_2(\{0\})$ in the proportion $1\colon\frac{3}{2}$.

The spectral expansion of the random field $T(\mathbf{x})$ can be established in the same way as in Example~\ref{ex:2}, see \cite[Section~3.6, Theorem~26]{MR3930601}.
\end{example}

\begin{example}
Let $U=\mathbb{R}^3\otimes\mathsf{S}^2(\mathbb{R}^3)$. The Clebsch--Gordan rule gives
\[
\begin{aligned}
U&=2U_{1-}\oplus U_{2-}\oplus U_{3-},\\
\mathsf{S}^2(U)&=5U_{1+}\oplus U_{1+}\oplus 10U_{2+}\oplus 5U_{3+}\oplus 5U_{4+}\oplus U_{5+}\oplus U_{6+}.
\end{aligned}
\]

The dimensions of linear slices are $\dim\tilde{U}_0=21$, $\dim\tilde{U}_1=5$. The bases in the above spaces are given in \cite[Table~3.5]{MR3930601}.

The set of extreme points of the convex compact $\mathcal{C}_0$ consists of three connected components. No of them are singletons. The convex compact $\mathcal{C}_1$ is a simplex with $5$~vertices.

The nonzero elements of the $18\times 18$ matrix $f(\lambda)$ in terms of $21$ $M$-functions are given in \cite[Table~3.8]{MR3930601}. We have $5$ $L$-functions of degree~$0$ given by \cite[Equation~(2.40)]{MR3930601}, $11$ $L$-functions of degree~$2$ given by \cite[Table~2.3]{MR3930601}, $5$ $L$-functions of degree~$4$ given by \cite[Equation~(2.44)]{MR3930601}, and $1$ $L$-function of degree~$6$, $22$~$L$-functions altogether, which is more that $21$. This phenomenon is well-known in invariant theory. The elements of the basis of form-invariant polynomials are not necessary independent. There may exist polynomial relations between them called \emph{syzygies}.\index{syzygy} In our case, one of the $L$-functions of degree~$2$ is a linear combinations of the remaining $L$-functions, see \cite[p.~105]{MR3930601}. The $M$-functions are expressed as linear combinations of the $21$ linearly independent $L$-functions according to \cite[Table~3.6]{MR3930601}.

The two-point correlation tensor of the random field $T(\mathbf{x})$ is given by an equation similar to \eqref{eq:32}. This time, however, there are $63$~functions $N_{nq}(\lambda,r)$ given by \cite[Table~3.10]{MR3930601}.

The spectral expansion of the random field $T(\mathbf{x})$ can be established in the same way as in Example~\ref{ex:2}, see \cite[Section~3.7, Theorem~34]{MR3930601}.
\end{example}

\begin{example}
Let $U=\mathsf{S}^2(\mathsf{S}^2(\mathbb{R}^3))$. The Clebsch--Gordan rule gives
\[
\begin{aligned}
U&=2U_{0+}\oplus 2U_{2+}\oplus U_{4+},\\
\mathsf{S}^2(U)&=7U_{0+}\oplus U_{1+}\oplus 10U_{2+}\oplus 3U_{3+}\oplus 8U_{4+}\oplus 2U_{5+}\oplus 3U_{6+}\oplus U_{8+}.
\end{aligned}
\]

The dimensions of linear slices are $\dim\tilde{U}_0=29$, $\dim\tilde{U}_1=7$. The bases in the above spaces are given in \cite[Table~3.11]{MR3930601}.

The set of extreme points of the convex compact $\mathcal{C}_0$ consists of three connected components. The first one is $14$-dimensional, the second is $5$-dimensional, the third is $4$-dimensional. For the convex compact $\mathcal{C}_1$, the set of its extreme points contains $1$ singleton and $2$ ellipses.

The nonzero elements of the $21\times 21$ matrix $f(\lambda)$ in terms of $29$ $M$-functions are given in \cite[Table~3.14]{MR3930601}. We have $8$ $L$-functions of degree~$0$ given by \cite[Table~2.2]{MR3930601}, $13$ $L$-functions of degree~$2$ given by \cite[Table~2.4]{MR3930601}, $10$ $L$-functions of degree~$4$ given by \cite[Table~2.5]{MR3930601}, $3$ $L$-functions of degree~$6$ given by \cite[Table~2.6]{MR3930601}, and $1$ $L$-function of degree~$8$, $35$~$L$-functions altogether. Only $29$ of them are linearly independent, and $14$ independent $L$-functions are missing in \cite{LOMAKIN19651048}. There are $35-29=6$ syzygies. The $M$-functions are expressed as linear combinations of the $29$ linearly independent $L$-functions according to \cite[Table~3.12]{MR3930601}.

The two-point correlation tensor of the random field $T(\mathbf{x})$ is given by an equation similar to \eqref{eq:32}. This time, however, there are $87$~functions $N_{nq}(\lambda,r)$ given by \cite[Table~5]{preprint2016}.

The spectral expansion of the random field $T(\mathbf{x})$ can be established in the same way as in Example~\ref{ex:2}, see \cite[Section~3.8, Theorem~36]{MR3930601}.
\end{example}

\subsection{Concluding remarks}\label{sub:concluding}

Note the following connections between the theory of homogeneous and isotropic random fields and other fields of mathematics.

\begin{itemize}
  \item Special functions appear in the spectral theory of homogeneous and isotropic random fields as orthonormal bases in functional spaces and as the coefficients of Fourier expansions with respect to the above bases.
  \item The two-point correlation tensor of a homogeneous and isotropic random field is form-invariant, hence a connection with invariant theory.
  \item After a rearranging of basis vectors, the matrix-valued function $f(\lambda_m)$ has the block-diagonal structure. The number of distinct blocks is equal to the number of connected components in the set of extreme points of the convex compact $\mathcal{C}_m$.
  \item The number of integrals in the spectral expansion of the two-point correlation tensor is equal to the number of connected component in the set of extreme points of the convex compact $\mathcal{C}_0$.
  \item If $\mathcal{C}_0$ is not a simplex, then the above expansion contains arbitrary measurable functions.
  \item The structure of eventual atoms of the measures $\Phi_n$ is determined by the position of $\mathcal{C}_1$ inside $\mathcal{C}_0$.
\end{itemize}

We formulate one more, hypothetical connection. At least, no counter-exam\-ples are known to the authors.
\begin{itemize}
  \item The set $\mathcal{C}_0$ is a simplex if and only if the representation $U$ is irreducible.
\end{itemize}

Is it possible to have a formula for the two-point correlation tensor of a homogeneous and isotropic random field that is similar to Equations~\eqref{eq:5} and \eqref{eq:28}, that is, without arbitrary measurable functions? The answer is: yes, but you have to pay for that. The idea is as follows: consider a simplex $\mathcal{C}$ satisfying $\mathcal{C}_1\subset\mathcal{C}\subset\mathcal{C}_0$. Force the function $f(\mathbf{k})$ to take values in $\mathcal{C}$. We obtain a description of the two-point correlation tensor of a \emph{subclass} of the class of homogeneous and isotropic random fields. In other words, instead of necessary and sufficient conditions, we obtain only sufficient conditions. The closer is the Lebesgue measure of the simplex $\mathcal{C}$ in comparison with that of $\mathcal{C}_0$, the closer are the obtained sufficient conditions to the necessary ones. See details in \cite{MR3930601}.

\section{Applications to continuum physics}\label{sec:continuum}

\subsection{Motivation from stochastic mechanics}\label{sub:motivation}

Tensor-valued random fields (TRFs) are a natural setting for a stochastic
generalization of \emph{continuum physics}. By this we understand
continuum mechanics of fluids and solids, as well as thermal conductivity
and coupled-field models such as thermoelasticity, thermodiffusion, and
electromagnetic interactions in deformable media (e.g., piezoelectricity).
Our scope is the classical physics.

Now, we focus on two types of TRFs appearing in continuum physics:\ (i)
fields of \emph{dependent quantities} (displacement, velocity,
deformation, rotation, stress,\dots) and (ii) fields of \emph{constitutive responses} (conductivity, stiffness, permeability\dots). Five
such fields were listed in Example~\ref{ex:continuum}. All of these fields are tensors of zeroth, first or higher rank and, generally, of random nature (i.e., displaying spatially inhomogeneous, random character), as opposed to
deterministic continuum physics. In the latter case and as a starting point,
we typically have an equation of the form
\begin{equation}
\mathcal{L}\left( \mathbf{u}\right) =f,  \label{deterministic-eq}
\end{equation}%
defined on a subset $\mathcal{D}$ of the $d$-dimensional affine Euclidean
space $E^{d}$, where $\mathcal{L}$ is a differential operator, $f$ is a
source (or forcing function), and $\mathbf{u}$ is a solution field. This
needs to be accompanied by appropriate boundary and/or initial conditions.

\begin{remark}
We use the symbolic ($\mathbf{u}$) or, equivalently, the subscript ($u_{i\dots}$) notations for tensors, as the need arises; also an overdot will mean the derivative with respect to time, $d/dt$.
\end{remark}

A field theory becomes stochastic in two main situations. First, there
appears an apparent randomness of $\mathbf{u}$ due to an inherent
nonlinearity of $\mathcal{L}$ as exemplified by mathematical models (such as
the Navier--Stokes equations) modeling the turbulent fluid motions. This is
the case of statistical fluid mechanics, with the velocity field being the
random dependent quantity, see \cite{MR2406667,MR2406668,MR0001702,MR0033702}.

Alternatively, the field theory becomes stochastic if the coefficients of $\mathcal{L}$, such as the conductivity tensor, are a TRF, so \eqref{deterministic-eq} becomes
\begin{equation}
\mathcal{L}\left( \omega \right) \mathbf{u}=f.  \label{stoch-eq}
\end{equation}
This stochastic equation governs the response of a \textit{random medium}\index{random medium}
\begin{equation}
\mathcal{B}=\left\{ \,B\left( \omega \right) \colon \omega \in \Omega
\,\right\}  \label{eq:set}
\end{equation}%
on an appropriate spatial domain. In principle, each of the realisations $B\left( \omega \right) $ follows deterministic laws of classical mechanics.
Probability is introduced to deal with the set of all possible realisations.
The governing relation\ is a stochastic partial differential equation (SPDE)
and this ensemble picture is termed \textit{stochastic continuum physics}.
There also is a third possibility of SPDE: random forcing and/or random
boundary/initial conditions; this case will not be pursued here.

In what follows, we discuss the TRFs of dependent fields and of constitutive
responses.

\subsection{TRFs of dependent fields}

\subsubsection{Rank 1 TRFs (vector random field)}\label{subsub:rank1}

\paragraph{Restriction imposed by a divergence-free property} Consider a TRF $\boldsymbol{v}$ over $\mathbb{R}^{d}$ ($d=2$ or $3$) to be
solenoidal, i.e. $\diverge\boldsymbol{v}=0$. Then the correlation function
$R_{ij}:=\left\langle v_{i}\left( \boldsymbol{0}\right), v_{j}\left(
\boldsymbol{r}\right) \right\rangle $ satisfies
\[
0=R_{ij},_{i}\left( \boldsymbol{r}\right) \equiv \frac{R_{ij}\left(
\boldsymbol{r}\right) }{\partial r_{i}},
\]
where the index $,i$ denotes differentiation with respect to $r_i$, and where we use the \emph{Einstein convention}:\index{Einstein convention} if an index variable appears twice in a single term, than that term is summed over all the values of the index. Now, writing the representation~\eqref{eq:6} as $R_{ij}\left(\boldsymbol{r}\right)=A\left(r\right)r_{i}r_{j}+B\left(r\right)\delta_{ij}$, we find
\begin{equation}
\left( n+1\right) A+rA^{\prime }+\frac{1}{r}B^{\prime }=0,
\label{correlation restriction}
\end{equation}
where a prime denotes a derivative with respect to $r$. Next, introduce two
new correlation functions
\begin{equation}
\begin{aligned}
\text{longitudinal: }f\left( r\right) &=\displaystyle\frac{\left\langle
v_{p}\left( \boldsymbol{0}\right), v_{p}\left( \boldsymbol{r}\right)
\right\rangle }{\left\langle v_{p}^{2}\right\rangle }, \\
\text{lateral: }g\left( r\right) &=\displaystyle\frac{\left\langle
v_{n}\left( \boldsymbol{0}\right), v_{n}\left( \boldsymbol{r}\right)
\right\rangle }{\left\langle v_{n}^{2}\right\rangle },
\end{aligned}
\label{f and g}
\end{equation}
whereby $p$ (or $n$) denotes parallel (resp., normal) velocity components,
while the summation convention does not apply to the terms in the
denominator. Such a vector random field is encountered in many physical
settings, e.g. in turbulent incompressible flows \cite{MR2406667,MR2406668,MR0001702}. Another example is the anti-plane elasticity in the absence of body force fields.

By ergodicity in the mean, we have that the square ($v^{2}$) of any velocity
component $v$\ equals $v^{2}=\left\langle v_{p}^{2}\right\rangle
=\left\langle v_{n}^{2}\right\rangle =\frac{1}{n}v_{i}v_{i},$so that%
\[
\begin{aligned}
v^{2}f\left( z\right) &=\left\langle v_{p}\left( \boldsymbol{0}\right),
v_{p}\left( \boldsymbol{r}\right) \right\rangle =A\left( r\right)
r^{2}+B\left( r\right) , \\
v^{2}g\left( r\right) &=\left\langle v_{n}\left( \boldsymbol{0}\right),
v_{n}\left( \boldsymbol{r}\right) \right\rangle =B\left( r\right) .%
\end{aligned}%
\]
It follows from \eqref{correlation restriction} that $f$ and $g$ are
related through
\begin{equation}
g=f+\frac{1}{n-1}rf^{\prime }.  \label{solenoidal-f-g}
\end{equation}
Note that $f$ is typically accessible through experiments or computer
simulations so one can then determine $g$.

The above is the paradigm for treatments of other TRFs in more complex
situations, which can be summarized as:
\begin{itemize}
  \item find the explicit form of the correlation function;
  \item impose a restriction dictated by the relevant physics;
  \item support the results by experiments and/or computational modeling.
\end{itemize}

\paragraph{Restriction imposed by a curl-free property} Other situations in continuum physics require a vector field to be irrotational. Thus, consider a TRF $\boldsymbol{v}$ over $\mathbb{R}^{3}$ to
satisfy $\curl\boldsymbol{v}=\mathbf{0}$. This implies
\[
0=\left\langle \epsilon _{ijk}v_{k},_{j}\left( \boldsymbol{0}\right),
v_{p}\left( \boldsymbol{r}\right) \right\rangle =\epsilon
_{ijk}R_{kp},_{j}\left( \boldsymbol{r}\right) ,
\]
where $\epsilon _{ijk}$\ is the Levi-Civita permutation tensor in three dimensions, see Example~\ref{ex:Levi-Civita}. Given \eqref{eq:6}, we identify two distinct restrictions on
\[
R_{ij}:=\langle v_{i}(\boldsymbol{0}),v_{j}(\boldsymbol{r})\rangle:
\]
\begin{description}
  \item[Case~1]: $\left( i,p\right) =\left( 1,1\right)$:
\[
R_{31},_{2}\left( \boldsymbol{r}\right) =A^{\prime }\left( r\right) \frac{1}{%
r}r_{1}r_{2}r_{3}=R_{21},_{3}\left( \boldsymbol{r}\right).
\]
  \item[Case~2]: $\left( i,p\right) =\left( 1,2\right)$:
\[
\begin{aligned}
R_{32},_{2}\left( \boldsymbol{r}\right) &=\displaystyle A^{\prime }\left(
r\right) \frac{1}{r}r_{2}^{2}r_{3}+B\left( r\right) r_{3}, \\
R_{22},_{3}\left( \boldsymbol{r}\right) &=\displaystyle A^{\prime }\left(
r\right) \frac{1}{z}r_{2}^{2}r_{3}+B^{\prime }\left( r\right) \frac{1}{r}%
r_{3}.
\end{aligned}
\]
\end{description}
Case~1 is satisfied identically, while Case~2 implies the restriction
\[
rA=B^{\prime }\left( r\right) .
\]
In terms of the $f$ and $g$ functions \eqref{f and g}, we find
\begin{equation}
f=g+rg^{\prime }.  \label{irrotational-f-g}
\end{equation}%
Interestingly, this mirrors the relation between $f$ and $g$ in \eqref{solenoidal-f-g} for $n=2$.

Also, \eqref{solenoidal-f-g} and \eqref{irrotational-f-g} provide equations
for a new correlation function from a known one.

\paragraph{Velocity and stress field correlations in fluid mechanics} Returning back to the incompressible velocity field $\boldsymbol{v}$, the
Reynolds stress $R_{kl}:=-\rho \left\langle v_{k},v_{l}\right\rangle $
defines a symmetric rank 2 TRF. Next, consider the spatial average of the
turbulence energy defined from $R_{kl}$ as $\psi =\frac{1}{2}R_{kk}=-\frac{1%
}{2}\rho \left\langle v_{k},v_{l}\right\rangle $. While this defines a scalar
RF, its correlation follows from \eqref{eq:31}
\[
\left\langle \psi (\boldsymbol{0}),\psi (\boldsymbol{r})\right\rangle =\frac{1%
}{4}\left\langle R_{ii}(\boldsymbol{0}),R_{kk}(\boldsymbol{r})\right\rangle =%
\frac{1}{4}\sum_{m=1}^{5}S_{m}(z)J_{iikk}^{(m)}(\boldsymbol{r}),
\]
implying an explicit link between the correlation function of energy and the
five $S_{m}(r)$ functions of the Reynolds stress:
\[
\left\langle \psi \left( \boldsymbol{0}\right), \psi \left( \boldsymbol{r}%
\right) \right\rangle =\frac{9}{4}S_{1}\left( r\right) +\frac{3}{2}%
S_{2}\left( r\right) +\frac{3}{2}S_{3}\left( r\right) +S_{4}\left( r\right) +%
\frac{1}{4}S_{5}\left( r\right) .
\]

\subsubsection{Rank 2 TRFs}

\paragraph{Classical continuum mechanics} Rank 2 TRFs play a very important r\^{o}le in statistical continuum physics. In the case of small deformation gradients, the state of the medium isdescribed by three dependent TRFs of: Cauchy stress $\sigma _{ij}$, displacement $u_{i}$, and strain tensor $\varepsilon_{ij}$. The latter is defined from the former by
\begin{equation}
\varepsilon _{ij}=u_{(i},_{j)},  \label{strain-displacement}
\end{equation}%
where $u_{(i},_{j)}=(u_{i,j}+u_{j,i})$ is the \emph{symmetrised gradient},\index{symmetrised gradient} while the stress field is subject to the balance of linear and angular
momenta

\begin{subequations}\label{Cauchy stress balances}
\begin{align}
\sigma _{ij},_{j}+\rho f_{i}&=\rho \ddot{u}_{i},\\
\sigma_{ij}&=\sigma_{ji}.
\end{align}
\end{subequations}
In \eqref{Cauchy stress balances} $f_{i}$\ is the body force field (resolved
per unit volume) and $\rho$ is the mass density of the body.

Given that any 2nd-rank tensor field $T$ can be decomposed into
potential ($T_{1}$) and birotational ($T_{2}$) parts \cite{MR0095615}
\[
T=T_{1}+T_{2}, \qquad\curl T_{1}=\mathbf{0},\quad\diverge T_{2}=\mathbf{0},
\]
where $T_{1}$\ is described by the vector potential and $T_{2}$ by the tensor potential, one can conclude that $\sigma _{ij}$ is birotational, while $\varepsilon _{ij}$\ is a potential tensor field, with $u_{i}$ being its potential. It follows that \eqref{strain-displacement} and
\eqref{Cauchy stress balances} provide restrictions on the admissible forms
of the correlation structure of these three fields, prior to assuming any
constitutive behavior. Proceeding in the same manner as in Subsubsection~\ref{subsub:rank1}, while assuming absence of any body force fields, such restrictions have been worked out in thermal conductivity, classical elasticity, and micropolar elasticity \cite{MR0163467,MR3327730,SHERMERGOR1971392}.  Extending these results from the quasi-static to dynamic settings is an open research topic.

Interpretations of specific correlations:\ Given that the rank 2 tensor $T$ has diagonal and off-diagonal components, there are five special cases of $B_{ij}^{kl}$ which shed light on the physical meaning of $K_{\alpha }$'s:

\begin{enumerate}
\item $\langle T_{ij}( \boldsymbol{0}), T_{kl}(
\boldsymbol{r}) \rangle |_{i=j=k=l}$; i.e. auto-correlations of
diagonal terms:
\begin{equation*}
\langle T_{11}( \boldsymbol{0}), T_{11}( \boldsymbol{r}%
) \rangle
=K_{0}+2K_{1}+2r_{1}^{2}K_{2}+4r_{1}^{2}K_{3}+r_{1}^{4}K_{4}
\end{equation*}%
and then $\langle T_{22}( \boldsymbol{0}), T_{22}(
\boldsymbol{r}) \rangle $ and $\langle T_{33}(
\boldsymbol{0}), T_{33}( \boldsymbol{r}) \rangle $ by
cyclic permutations $1\rightarrow 2\rightarrow 3$.

\item $\langle T_{ij}( \boldsymbol{0}), T_{kl}(
\boldsymbol{r}) \rangle |_{i=j\neq k=l}$; i.e.
cross-correlations of diagonal terms:
\begin{equation*}
\langle T_{11}( \boldsymbol{0}), T_{22}( \boldsymbol{r}%
) \rangle =K_{0}+( r_{2}^{2}+r_{1}^{2})
K_{2}+r_{2}^{2}r_{1}^{2}K_{4}
\end{equation*}%
and then $\langle T_{22}( \boldsymbol{0}), T_{33}(
\boldsymbol{z}) \rangle $ and $\langle T_{33}(
\boldsymbol{0}), T_{11}( \boldsymbol{r}) \rangle $ by
cyclic permutations $1\rightarrow 2\rightarrow 3$.

\item $\langle T_{ij}( \boldsymbol{0}), T_{kl}(
\boldsymbol{r}) \rangle |_{i=k\neq j=l}$; i.e. auto-correlations
of off-diagonal terms:
\begin{equation*}
\langle T_{12}( \boldsymbol{0}), T_{12}( \boldsymbol{r}%
) \rangle =K_{1}+( r_{1}^{2}+r_{2}^{2})
K_{3}+r_{1}^{2}r_{2}^{2}K_{4}
\end{equation*}%
and then $\langle T_{23}( \boldsymbol{0}), T_{23}(
\boldsymbol{r}) \rangle $ and $\langle T_{31}(
\boldsymbol{0}), T_{31}( \boldsymbol{r}) \rangle $ by
cyclic permutations $1\rightarrow 2\rightarrow 3$.

\item $\langle T_{ij}( \boldsymbol{0}), T_{kl}(
\boldsymbol{r}) \rangle |_{j\neq i=k\neq l\neq j}$; that is
cross-correlations of off-diagonal terms:
\[
\langle T_{12}(\boldsymbol{0}), T_{13}( \boldsymbol{r}) \rangle
=r_{2}r_{3}K_{3}+r_{1}^{2}r_{2}r_{3}K_{4},
\]
then $\langle T_{13}(\boldsymbol{0}), T_{32}( \boldsymbol{r}) \rangle $ and
$\langle T_{32}( \boldsymbol{0}), T_{12}( \boldsymbol{r}) \rangle $ by cyclic permutations $1\rightarrow 2\rightarrow 3$.

\item $\langle T_{ij}( \boldsymbol{0}), T_{kl}(
\boldsymbol{r}) \rangle |_{i=j=k\neq l\neq j}$; i.e.
cross-correlations of diagonal with off-di\-a\-go\-nal terms: such as
\[
\begin{aligned}
\langle T_{11}( \boldsymbol{0}), T_{12}( \boldsymbol{r}) \rangle &=r_{1}r_{2}( K_{2}+2K_{3})+r_{1}r_{2}^{3}K_{4},\\
\langle T_{12}( \boldsymbol{0}),T_{13}( \boldsymbol{r}) \rangle
&=r_{2}r_{3}K_{2}+r_{1}^{2}r_{2}r_{3}K_{4}
\end{aligned}
\]
and the other ones by cyclic permutations $1\rightarrow 2\rightarrow 3$.
\end{enumerate}

In principle, we can determine these five correlations for a specific
physical situation. For example, when $T$ is the anti-plane elasticity tensor for a given resolution (or \emph{mesoscale}, which will be defined in \eqref{mesoscale}), we can use micromechanics or experiments,
and then determine the best fits of $K_{\alpha }$ ($\alpha =1,\dots,5$)
coefficients.

One may determine $C_{ij}^{kl}(\boldsymbol{r})$\ through experimental
measurements or by computational mechanics/physics on diverse material
microstructures, in both cases following a strategy for 4th-rank TRF in 2d
or 3d, see \cite{Sena2013}.

\textit{Special case}: the TRF is locally isotropic $T_{ik}\left(
\boldsymbol{r}\right) =T(r)\delta _{ik}$ with (necessarily) $T\left(
r\right) >0$, so that we simply have a scalar random field. Then, the
auto-correlation $B_{11}^{11}$ of $T$\ is a single scalar function $C(r)$.
With the variance $\Var(T)=\left\langle T(\boldsymbol{0}),T(\boldsymbol{r}%
)\right\rangle $, the correlation coefficient $\rho (\boldsymbol{r}%
):=C(z)/\Var(T)$ is constrained by a standard condition of scalar RFs $%
-1/n\leq \rho (\boldsymbol{r})\leq 1$, if the model is set in $\mathbb{R}%
^{n} $. Basically, this is the correlation function of the conductivity (or
diffusion) in conventional SPDEs of elliptic type, conventionally set up on
scalar random fields.

\paragraph{In-plane case} With the group-theoretical considerations determined in detail in \cite[Section~4.5]{MR3930601}, the interpretations of specific correlations rely on this form of the correlation function for a rank tensor $T$:%
\[
T_{ijkl}(\boldsymbol{r})=H_{1}\delta _{ij}\delta _{kl}+H_{2}\left(
\delta _{ik}\delta _{jl}+\delta _{il}\delta _{jk}\right) +H_{4}\left( \delta
_{ij}r_{k}r_{l}+\delta _{kl}r_{i}r_{j}\right) +H_{5}r_{i}r_{j}r_{k}r_{l}.
\]
Given that $T$ has diagonal and off-diagonal components, there are four special cases of $T_{ijkl}$ which shed light on the physical meaning of $H_{n}$'s:
\begin{enumerate}
\item auto-correlations of diagonal terms: $%
T_{1111}=H_{1}+2H_{2}+2r_{1}^{2}H_{4}+r_{1}^{4}H_{5}$ and $%
T_{2222}=H_{1}+2H_{2}+2r_{2}^{2}H_{4}+r_{2}^{4}H_{5}$.

\item cross-correlation of diagonal terms: $T_{1122}=H_{1}+\left(
r_{1}^{2}+r_{2}^{2}\right) H_{4}+r_{1}^{2}r_{2}^{2}H_{5}$.

\item auto-correlation of an off-diagonal term: $%
T_{1212}=H_{2}+r_{1}^{2}r_{2}^{2}H_{5}$.

\item cross-correlation of a diagonal with an off-diagonal term:
\[
\left\langle T_{11}(\boldsymbol{0}),T_{12}(\boldsymbol{r})\right\rangle
=T_{1112}=r_{1}r_{2}H_{4}+r_{1}^{3}r_{2}H_{5}.
\]
\end{enumerate}

Just as in the case of TRF of rank 1, we can determine these four
correlations for a specific physical situation. Without loss of generality
(due to wide-sense isotropy), when $\boldsymbol{r}\equiv \left(
r_{1},r_{2}\right) $ is chosen equal to $\left( r,0\right) $,
\begin{equation}
\begin{aligned}
H_{1}&=T_{2222}-2T_{1212}, \\
H_{2}&=T_{1212}, \\
H_{4}&=r^{-2}\left(T_{1122}-T_{2222}-2T_{1212}\right), \\
H_{5}&=r^{-4}\left(T_{1111}+T_{2222}-2T_{1122}\right).
\end{aligned}
\label{H-T-connection}
\end{equation}

For example, when $T$ is the anti-plane elasticity tensor for a given resolution (or mesoscale), one can use computational micromechanics or
experiments, and then determine the best fits of $H_{\alpha }$ ($\alpha
=1,2,4,5$) coefficients, providing the positive-definiteness of $T$
is imposed. However, when $T$ represents a dependent field quantity, then a restriction dictated by the field equation needs to be imposed. In the following, by analogy to what was reported in \cite{MR3336288}, we
consider $T$ being either the in-plane stress or the in-plane
strain.

TRF with a local isotropic property:\ Take $T_{ij}=T\delta _{ij}$, where the
axial component $T$ is the scalar random field describing the randomness of
such a medium. Since $T_{11}=T_{22}$ and $T_{12}=0$\ must hold everywhere, $T_{1111}=T_{2222}=T_{1122}$ and $T_{1212}=0$. Hence, $H_{2}=H_{4}=H_{5}=0$ and only $H_{1}\neq 0$ is retained, and that is the function modeling the correlations in $T$. One example is the constitutive response (e.g., conductivity $k_{ij}=k\delta _{ij}$) in conventional models of SPDEs; see \cite[Subsection~4.8.1]{MR3930601} for a model with anisotropy.

When $T_{ij}$\ represents a dependent field quantity, then a restriction
dictated by an appropriate field equation needs to be imposed. One may then
consider $T$ to be either the in-plane stress or the in-plane
strain under the restriction imposed on the correlation tensor when it is
divergence-free or potential tensor field.

\paragraph{Micropolar continuum mechanics} The conservation equations of linear and angular momenta in a micropolar continuum have been given in \cite[Chap\-ter~1]{MR3930601}. Henceforth, focusing on a static problem in the absence of body forces and moments, (\ref{Cauchy stress
balances}a) remains unchanged ($\tau _{ij},_{j}+\rho f_{i}=\rho \ddot{u%
}_{i}$), whereas (\ref{Cauchy stress balances}b) is replaced by
\begin{equation}
\mu _{ki},_{k}+\epsilon _{ijk}\tau _{jk}+\rho g_{i}=\rho J\ddot{\varphi}_{i}.
\label{micropolar balance laws}
\end{equation}%
Here $\mu _{ki}$ is the couple-stress, $\tau _{ji}$\ is the Cauchy stress, $g_{i}$ is the body torque per unit mass, $J$ is the (micro)inertia tensor of
a particle, and $\varphi _{i}$\ is the microrotation. We write $\tau _{jk}$
for the Cauchy stress to point out that this tensor generally lacks symmetry
of $\sigma _{ij}$ in \eqref{Cauchy stress balances}.

For a statistically homogeneous case, from (\ref{micropolar balance laws}) it follows that%
\begin{equation}
\left\langle \epsilon _{ijk}\sigma _{jk}\left( \boldsymbol{r}_{1}\right),
\epsilon _{prs}\sigma _{rs}\left( \boldsymbol{r}_{1}+\boldsymbol{r}\right)
\right\rangle =\left\langle \mu _{ji},_{j}\left( \boldsymbol{r}_{1}\right),
\mu _{rp},_{r}\left( \boldsymbol{r}_{1}+\boldsymbol{r}\right) \right\rangle .
\label{epsilon-mu connection}
\end{equation}%
The left hand side may be written as $LHS=\epsilon _{ijk}\epsilon
_{prs}Q_{jk}^{rs}$, where $Q_{jk}^{rs}\left( \boldsymbol{r}\right)
:=\left\langle \sigma _{jk}\left( \boldsymbol{r}_{1}\right), \sigma
_{rs}\left( \boldsymbol{r}_{1}+\boldsymbol{r}\right) \right\rangle $ is the
correlation function of the\ stress field. Since $\sigma _{jk}$\ is
generally asymmetric but, by assumption, statistically isotropic, we have $%
Q_{jk}^{rs}\left( \boldsymbol{r}\right) =Q_{rs}^{jk}\left( \boldsymbol{r}%
\right) $.

\subsection{TRFs of constitutive responses}

\subsubsection{From a random microstructure to mesoscale response}

Take a sheet of paper and hold it against light. You will see a grayscale,
i.e. a spatially non-uniform opacity of paper. This inhomogeneity is due to
formation of paper which involves a random placement of fibers during
manufacture, a high-speed process involving, for example, $10^{10}$ fibers
per second in a newsprint made continuously at 20 $m/s$ in 10 $m$ width, see \cite{Deng1994}. Due to the van der Waals forces, cellulose fibers cluster in flocs comprising thousands of them. But, even if one were able to remove flocculation, with random placement of fibers, one would not be able to obtain a homogeneous medium. One could achieve a homogeneous medium if the fibers had very well controlled lengths and were placed in a perfectly periodic manner as if it were a textile. Thus, paper is a random
(quasi-)two-dimensional (2d) medium, which one may be tempted to describe by
a random field of mass density%
\begin{equation}
\left\{ \rho \left( \mathbf{x},\omega \right) ;\mathbf{x}\in \mathbb{R}%
^{2},\omega \in \Omega \right\} .  \label{mass density RF}
\end{equation}%
Here $\mathbf{x}$\ is the location in the plane of paper and $\omega$
indicates one realization of the mass density field.

By analogy, assuming a linear elastic response, the mechanical properties
are described by a RF of the in-plane stiffness tensor
\begin{equation}
\left\{C\left( \mathbf{x},\omega \right) ;\mathbf{x}\in \mathbb{R}%
^{2},\omega \in \Omega \right\} .  \label{2d stiffness RF}
\end{equation}%
The next tempting assumption is to introduce a local isotropy, i.e. assume
paper to be described not by the 4th-rank tensor random field (TRF)\ but by
a \textquotedblleft vector\textquotedblright\ RF of the Young modulus and
Poisson ratio $\left\{ E\left( \mathbf{x},\omega \right) ,\nu \left( \mathbf{%
x},\omega \right) ;\mathbf{x}\in \mathbb{R}^{2},\omega \in \Omega \right\} $
or, even more simply, by a single scalar RF $E\left( \mathbf{x},\omega
\right) $. Such models have been used widely. However, as discussed in \cite{MR2341287,OSTOJASTARZEWSKI2016111}, anisotropy cannot be disregarded.

Now, as we move the paper sheet away from our eyes, we note that it is
becoming more and more uniform, i.e. its spatial randomness diminishes and
tends to zero as the length scale of observation $L$ becomes very large
(\dots and then infinite) relative to the floc size $d$. Clearly,
introducing the concept of a \textit{mesoscale}\index{mesoscale}
\begin{equation}
\delta :=\frac{L}{d},  \label{mesoscale}
\end{equation}%
we recognize that the RFs above must be mesoscale dependent. Hence, in place
of (\ref{mass density RF})--(\ref{2d stiffness RF}) we can write more
compactly%
\begin{equation}
\left\{ \rho \left( \mathbf{x},\omega ,\delta \right) ,C\left(
\mathbf{x},\omega ,\delta \right) ;\mathbf{x}\in \mathbb{R}^{2},\omega \in
\Omega \right\} .  \label{mass+stiffness-RF}
\end{equation}%
This set of all the deterministic realizations $\rho \left( \mathbf{x}%
,\omega ,\delta \right) $ and $\mathsf{C}\left( \mathbf{x},\omega ,\delta
\right) $ defines a random material $\mathcal{B}_{\delta }$ parameterised by
a mesoscale $\delta $, and occupying a domain $B_{\delta }\subset\mathbb{R}^{2}$:
\[
\mathcal{B}_{\delta }=\left\{ B_{\delta }(\omega );\omega \in \Omega
\right\}.
\]

As we pull the sheet of paper away from our eyes, the mesoscale increases
and we observe that\ the randomness vanishes. Hence, we have the limits:%
\begin{equation*}
\lim_{\delta \rightarrow \infty }\rho \left( \mathbf{x},\omega ,\delta
\right) =\rho ^{\mathrm{eff}},\text{ \ \ \ }\lim_{\delta \rightarrow \infty }C\left( \mathbf{x},\omega ,\delta \right) =C^{\mathrm{eff}},
\end{equation*}
corresponding to a conventional, deterministic continuum. In the above, we
have introduced the effective material properties (such as stiffness,
modulus, and Poisson ratio) which are typically employed in deterministic
models of continuum mechanics.

\begin{itemize}
\item How can one proceed when an RVE cannot be identified?

\item How can one determine the SVE (mesoscale)\ properties?

\item How can one solve a macroscopic boundary value problem?
\end{itemize}

How can one proceed in the case of media with fractal structures and (also)
long range effects?

\begin{figure}
  \centering
  \includegraphics[width=\columnwidth]{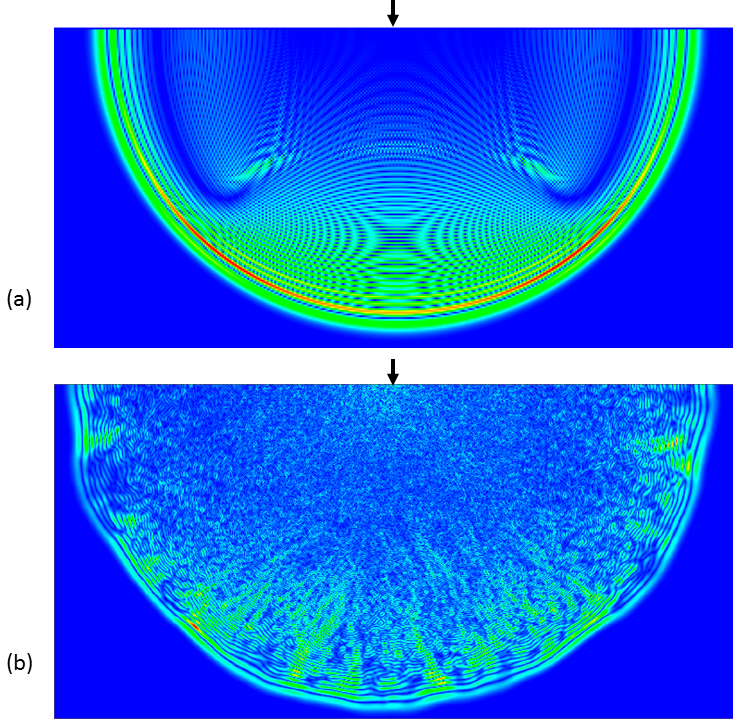}
  \caption{(a) Lamb's problem for a homogeneous half-plane with a concentrated tangential load $P\left( t\right) $ having a
triangular-in-time history. (b) The same problem on a random field of mass
density, with: the coefficient of variation $=0.124$, Cauchy correlation
function \cite{RUIZMEDINA2011259}, fractal dimension $D=2.1$, and Hurst coefficient $H=0.5$. Solutions in both cases have been obtained via a cellular automata method which (i) allows assignment of heterogeneous material properties from cell to cell and (ii) in the limit of infinitesimal cells, is analytically equivalent to the continuum elastodynamics equations \cite{MR3597929}. Reprinted from \cite{MR4031379} by permission of Elsevier.}\label{fig:1}
\end{figure}

As an example, Fig.~\ref{fig:1}(a) shows the response of such an idealized linear elastic half-plane to an impact-type normal force (i.e., a half-plane
subjected to a vertical point force). On the other hand, as Fig.~\ref{fig:1}(b) shows, in terms of one realization of the random field of mass density \eqref{mass density RF}, this Lamb's problem responds differently when the half-plane has a random mass density: the wavefront gets progressively distorted and attenuated through scattering on all the heterogeneities. Depending on the kind of material under consideration, various types of random fields can be considered and, as discussed in \cite[Section~4]{MR4031379}, in order to model a geophysical-type randomness found in nature, the particular random field (RF) taken here has a fractal-and-Hurst character. Clearly, Fig.~\ref{fig:1}(b) represents
one realization from a space of solutions of SBVP on a random field; here
SVBP stands for a \textit{stochastic} BVP. Some basic questions arising here
are:

What random field models of material properties should drive such SBVPs?

What methods exist for solving these SBVPs?

What constraints on RFs of field quantities are dictated by the governing
equations?

The scale-dependent homogenization~--- discussed in detail in \cite{MR2341287,OSTOJASTARZEWSKI2016111}, ~--- shows that the anisotropy accompanies randomness: anisotropy goes to zero as the mesoscale increases. In other words, assuming isotropy of a smooth continuum on any mesoscale forces one to assume homogeneity, i.e. lack of spatial randomness. [This, of course, does not contradict anisotropic piece-wise homogeneous medium models, such as polycrystals.] Thus, to truly allow for anisotropy of the stiffness tensor field, the field equation written for the anti-plane displacement $u\equiv u_{3}$\ is
\begin{equation}
(C\delta _{ij}u,_{j}),_{i}+\rho f=\rho \ddot{u},
\label{isotropic-anti-plane}
\end{equation}%
should be replaced by
\begin{equation}
(C_{ij}u,_{j}),_{i}+\rho f=\rho \ddot{u}.  \label{anisotropic-anti-plane}
\end{equation}%
Here $\mathbf{C}$ is a $2$nd-rank TRF set up on a mesoscale $\delta $
corresponding to the resolution desired in a particular physical problem,
Fig.~\ref{fig:2}(c); both equations are defined for $\mathbf{x}\in\mathbb{R}^{2}$ and $\omega \in \Omega $. This type of a model, instead of $C_{ij}(\boldsymbol{\cdot },\omega )=C(\boldsymbol{\cdot },\omega )\delta _{ij}$, is sorely needed in SPDE and stochastic finite element (SFE) methods.

Moving to the 2d (in-plane) or 3d elasticity, if one assumes local material
isotropy, a simple way to introduce material spatial randomness is to take
the pair of Lam\'{e} constants $(\lambda ,\mu )$ as a \textquotedblleft
vector\textquotedblright\ random field, resulting in a generalization of the
classical Navier equation for the displacement field $\mathbf{u}$:%
\begin{equation}
(\lambda u_{kk}\delta _{ij}+2\mu u_{(i},_{j)}),_{j}+\rho f_{i}=\rho \ddot{u}%
_{i}.  \label{pre-Navier}
\end{equation}%
However, just like in (\ref{isotropic-anti-plane}), the local isotropy is a
crude approximation in view of the micromechanics upscaling arguments. Note
that, due to spatial gradients, any realization of a RF in Fig.~5.2(c)
involves some degree of anisotropy. Thus, \eqref{pre-Navier} should be replaced by
\begin{equation}
(C_{ijkl}u_{(k},_{l)}),_{j}+\rho f_{i}=\rho \ddot{u}_{i},
\label{anisotropic-elasticity}
\end{equation}%
where the stiffness $C$\ ($=C_{ijkl}\mathbf{e}_{i}\otimes \mathbf{e}%
_{j}\otimes \mathbf{e}_{k}\otimes \mathbf{e}_{l}$) is a 4th rank TRF. At any scale finitely larger than the microstructural scale, it is almost surely
anisotropic, typically triclinic.

The same arguments as above apply in case one prefers to work with field
equations directly in the language of stresses. Then the \textit{Ignaczak
equation of elastodynamics} \cite{MR4000178} is applicable, see \cite{MR170515}:
\begin{equation}
\left( \rho ^{-1}\sigma _{(ik},_{k}\right) ,_{j)}-S_{ijkl}\ddot{\sigma}%
_{kl}=0,  \label{Ignaczak-eqn}
\end{equation}%
where $S_{ijkl}$\ are the components of the local compliance tensor $S=C^{-1}$. Note that this formulation avoids the gradients of
compliance but introduces gradients of mass density, which is the reverse of
what the displacement language formulation (\ref{anisotropic-elasticity})
requires.

Summarizing, with the advent of \textquotedblleft multiscale
methods\textquotedblright , the contemporary solid mechanics recognizes the
hierarchical structure of materials, but hardly accounts for their
statistical nature.\ In effect, the multiscale methods remain deterministic,
while the SPDE\ and SFE are rather oblivious to micromechanics,
homogenization, and TRFs of properties.

With reference to \cite[Example~2.1]{MR3930601}, three cases are relevant
here: rank 2 TRF with\ $U=\mathsf{S}^{2}\left( \mathbb{R}^{3}\right) $, rank
3 TRF with $U=\mathsf{S}^{2}\left( \mathbb{R}^{3}\right) \otimes \mathbb{R}%
^{3}$, and rank 4 TRF with $U=\mathsf{S}^{2}\left( \mathsf{S}^{2}\left(
\mathbb{R}^{3}\right) \right) $. The correlation function in the first case
(such as the 3d conductivity) has already been given above in \eqref{eq:31}.

The correlation function in the case of rank 3 TRF(such as piezoelectricity)
has the general form \cite{MR3930601}
\[
R_{ijkprs}(\mathbf{r})=\sum_{n=1}^{21}L_{ijkprs}^{n}(\mathbf{r}%
)K_{n}\left( r\right),
\]
The correlation function in the case of rank 4 TRF (such as elasticity) has
the general form \cite{MR3930601}:
\[
R_{ijk\ell prst}(\mathbf{r})=\sum_{n=1}^{29}L_{ijk\ell prst}^{n}(\mathbf{r}%
)K_{n}\left( r\right).
\]

See \cite{preprint2016} and \cite{MR3621303} for explicit forms of all $L_{ijk\ell prst}^{n}$.

\begin{remark}
The correlation function of a rank 1 (resp., 2, 3, 4) TRF is
a sum of 2 (resp., 5, 21, 29) addends, each being a product of a tensor
function of twice higher rank with a scalar function of the norm $\left\Vert
\mathbf{r}\right\Vert $.
\end{remark}

\subsubsection{TRFs in damage phenomena}

\begin{figure}
  \centering
  \includegraphics[width=\columnwidth]{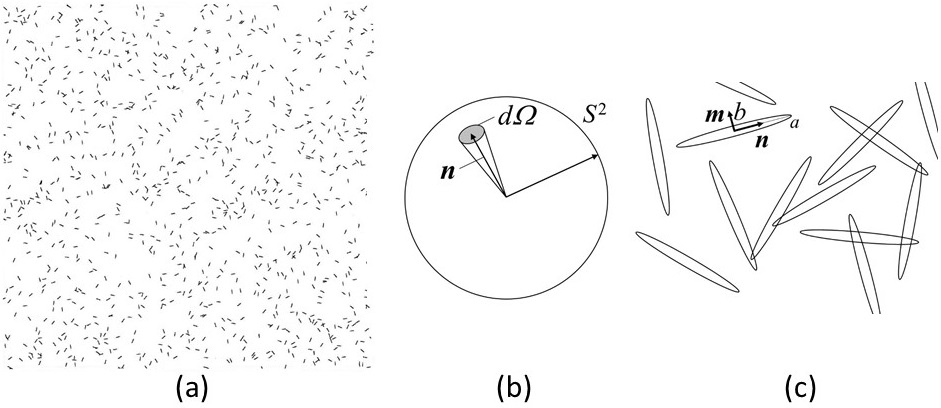}
  \caption{(a) Random crack field; (b) distribution of $\mathbf{n}$ on the unit sphere; (c) random ellipse field with a single ellipse (with semi-axes $b$, $a$) showing the $\left( \mathbf{m},\mathbf{n}\right) $ pair for determination of the fabric tensor. Reprinted from \cite{MR3930601} by permission of Cambridge University Press.}\label{fig:2}
\end{figure}

Many natural and man-made
materials work in presence of fields of many little cracks and defects.
Progressive failure occurs through evolution and catastrophic coalescence of
these microscale events. Descriptions of damage phenomena fields which enter
predictive models of solid mechanics typically involve continuum models.
Consider a disordered field of cracks (Fig.~\ref{fig:2}(a)), each specified by an
orientation vector $\mathbf{n}$ (i.e., a unit vector normal to the crack).
With reference to Fig.~\ref{fig:2}(b), for a system in 3d (respectively, 2d), one
introduces a probability density function of $\mathbf{n}$ on a unit sphere $%
S^{2}$ (respectively, on a circle)%
\begin{equation*}
p\left( \mathbf{n}\right) =D_{0}+D_{ij}f_{ij}\left( \mathbf{n}\right)
+D_{ijkl}f_{ijkl}\left( \mathbf{n}\right) +\dots
\end{equation*}%
This is a generalised Fourier series with respect to the irreducible tensor
bases
\[
\begin{aligned}
f_{ij}\left( \mathbf{n}\right) &=n_{i}n_{j}-\frac{1}{3}\delta _{ij}, \\
f_{ijkl}\left( \mathbf{n}\right) &=n_{i}n_{j}n_{k}n_{l}-\frac{1}{7}(\delta
_{ij}n_{k}n_{l}+\delta _{ik}n_{j}n_{l}+\delta _{il}n_{j}n_{k}+\delta
_{jk}n_{i}n_{l} \\
&\quad+\delta _{jl}n_{i}n_{k}+\delta _{kl}n_{i}n_{j})+\frac{1}{5\times 7}\left(
\delta _{ij}n_{k}n_{l}+\delta _{ik}n_{j}n_{l}+\delta _{il}n_{j}n_{k}\right) ,%
\end{aligned}%
\]
while
\begin{subequations}\label{fabric-tensors}
\begin{align}
D_{0}&=\displaystyle\frac{1}{4\pi }\int_{S^{2}}p\left( \mathbf{n}\right)
d\Omega , \\
D_{ij}&=\displaystyle\frac{1}{4\pi }\frac{3\times 5}{2}\int_{S^{2}}p\left(
\mathbf{n}\right) f_{ij}\left( \mathbf{n}\right) d\Omega , \\
D_{ijkl}&=\displaystyle\frac{1}{4\pi }\frac{3\times 5\times 7\times 9}{%
2\times 3\times 4}\int_{S^{2}}p\left( \mathbf{n}\right) f_{ijkl}\left(
\mathbf{n}\right) d\Omega ,\text{ }\dots
\end{align}
\end{subequations}%
are the scalar, second-order, fourth-order, damage tensors (respectively,
fabric tensors), describing the directional distribution of damage state
(grain-grain contacts). Thus, the overall directional distribution of damage
state of a material is described through the tensors $D_{0}$, $D_{ij}$, $%
D_{ijkl}$, \dots. Depending on the specific choice of the constitutive model,
either just one ($D_{0}$) or two, or more damage tensors are included in the
analysis.

Classical (deterministic) continuum damage mechanics (CDM) makes use of
fabric tensors in a manner borrowed from the mechanics of granular media, see \cite{Ganczarski2015,LUBARDA19932859,Murakamy2012}. Even if such models are introduced on scales much larger than a
single crack or defect, there is a a need to grasp spatially random
fluctuations and anisotropy. In the following, we outline a TRF model of
the rank 2 (symmetric) tensor $D_{ij}$ in (\ref{fabric-tensors}b). Its
correlation function%
\[
\mathcal{D}_{ij}^{kl}=\mathsf{E}\left[ D_{ij}(\boldsymbol{r+r}_{1})D_{kl}(%
\boldsymbol{r}_{1})\right],
\]
in the case of statistically isotropic damage, has the representation \eqref{eq:31}
To determine all the $A_{m}$'s, without loss of generality, we may take the
unit vector $\boldsymbol{n}=\left( n_{1}=1,n_{2}=0,n_{3}=0\right) $
co-aligned with $\boldsymbol{r}$, so that the following auto- and
cross-correlations (consecutively named $M_{i}$, $i=1,\dots,7$) result%
\[
\begin{aligned}
M_{1}&=\left\langle T_{11}(\boldsymbol{0}),T_{11}(\boldsymbol{r})\right\rangle
=S_{1}(r)+2S_{2}(r)+2S_{3}(r)+4S_{4}(r)+S_{5}(r) \\
M_{2}&=\left\langle T_{22}(\boldsymbol{0}),T_{22}(\boldsymbol{r})\right\rangle
=S_{1}(r)+2S_{2}(r) \\
M_{3}&=\left\langle T_{11}(\boldsymbol{0}),T_{22}(\boldsymbol{r})\right\rangle
=S_{1}(r)+S_{3}(r) \\
M_{4}&=\left\langle T_{22}(\boldsymbol{0}),T_{33}(\boldsymbol{r})\right\rangle
=S_{1}(r) \\
M_{5}&=\left\langle T_{12}(\boldsymbol{0}),T_{12}(\boldsymbol{r})\right\rangle
=S_{2}(r)+S_{4}(r) \\
M_{6}&=\left\langle T_{23}(\boldsymbol{0}),T_{23}(\boldsymbol{r})\right\rangle
=S_{2}(r) \\
M_{7}&=\left\langle T_{11}(\boldsymbol{0}),T_{12}(\boldsymbol{r})\right\rangle
=0.%
\end{aligned}%
\]
While the last result shows that the cross-correlation between the 11- and
12-components is always zero, we note that $M_{2}=M_{4}+2M_{6}$\ must hold,
so that only five $M_{i}$'s\ are independent, just as we have five functions
$A_{m}$, $m=1,\dots,5$, see \cite[Section~4.9]{MR3930601}. In principle, we
can determine these five correlations for a specific physical situation.
Thus, when $T_{ij}$\ is the damage tensor for a given resolution (on a given
mesoscale) in a coordinate system defined by $\boldsymbol{n}$, we can use
micromechanics or experiments to determine the best fits of $M_{i}$'s. Thus,
we have a following strategy for determination of the correlation structure $\mathcal{D}_{ij}^{kl}$:

\begin{enumerate}
\item Measure $M_{i}$, $i=1,\dots,6$.

\item Determine $A_{1}=M_{4}$\ and $A_{2}=M_{6}$.

\item Determine $A_{3}=M_{3}-M_{4}$ and $A_{4}=M_{5}-M_{6}$.

\item Determine $A_{5}=M_{1}-M_{3}-4M_{5}+2M_{6}$.
\end{enumerate}

\subsubsection{TRFs as dyadics}

With the components of the covariance tensor of a given TRF determined as
outlined above, the next task is the simulation of TRF. This will be
accomplished through a polyadic representation of tensors. Consider the
rank 2 TRF $C$, over a 2d domain $B$, taking values in the
space $\mathsf{S}^{2}\left( \mathbb{R}^{2}\right) $ which, in turn, provides
input into a stochastic partial differential equation (SPDE) of anti-plane
elasticity with local anisotropy \eqref{stoch-eq}.

If we were to assume $C\left( \omega ,\mathbf{x}\right) =C\left(
\omega ,\mathbf{x}\right)I$, the TRF $C$ would become
a random scalar field, whereupon we would simply have a stochastic
generalization of the anti-plane Navier equation of elastodynamics with a RF
shear coefficient $\mu $. As already argued in Subsection~\ref{sub:motivation} above, micromechanics studies imply that the local anisotropy \textit{must} be present in spatially inhomogeneous random media on the level of Fig.~\ref{fig:2}(c). To account for it, the rank 2 TRF is modeled as a superposition of the isotropic mean with a dyadic product of two vector RFs $\mathbf{a}=a_{i}\mathbf{e}_{i}$\ and $\mathbf{b}=b_{i}\mathbf{e}_{i}$\ ($i=1,2$), written in tensor and matrix notations:%
\begin{equation}
\begin{aligned}
C_{ij}&=\big\langle C_{ij}\big\rangle+sC_{ij}^{\prime },\qquad
\big\langle C_{ij}\big\rangle=\mu \delta _{ij},\qquad s=\mathrm{const}, \\
C_{ij}^{\prime }&=a_{i}b_{j},\qquad\left[ C_{ij}^{\prime }\right] =
\begin{pmatrix}
a_{1}b_{1} & a_{1}b_{2} \\
a_{2}b_{1} & a_{2}b_{2}%
\end{pmatrix}%
.
\end{aligned}
\label{dyadic}
\end{equation}%
Each $a_{i}$\ and $b_{i}$\ ($i=1,2$) is a Gaussian RF: $N\left( \mu
_{d},\sigma _{d}^{2}\right) =N\left( 0,1\right) $. Already the special case with $\mathbf{a}$ and $\mathbf{b}$ fully correlated indicates that taking a random medium with a stiffness TRF $C$ with full anisotropy leads to stronger fluctuations in responses to BVPs than a random medium with a scalar RF $C$, see \cite{Zhang2021}. This is the stepping-stone to studies without any restriction on  $\mathbf{a}$ and $\mathbf{b}$ in hyperbolic as well as parabolic and elliptic SPDEs. In the planned project, we will adopt the dyadic (\ref{dyadic}) for the rank 2 TRF without the restriction on $\mathbf{a}$ and $\mathbf{b}$. One needs codes to examine the spatial randomness effects of dyadic RFs with various
covariances adopted for the individual components $a_{i}$\ and $b_{i}$ in
this and other BVPs.

\paragraph{Polyadic TRFs} Random media occupy domains in $\mathbb{R}^{d}$. With the summation
convention, $\mathbf{x}=x_{i}\mathbf{e}_{i}$ is the vector in $\mathbb{R}%
^{d} $ where the set $\{\,\mathbf{e}_{i}\colon 1\leq i\leq d\,\}$ provides
an orthonormal basis. The symbol $(\mathbb{R}^{d})^{r}$, where $r$ is a
nonnegative integer, denotes the linear space $\mathbb{R}^{1}$ if $r=0$ and
the Cartesian product of $r$ copies of the set $\mathbb{R}^{d}$ otherwise. A
function $f\colon (\mathbb{R}^{d})^{r}\rightarrow \mathbb{R}$ is called the
\emph{polyadic}\index{polyadic function} (\emph{monadic}\index{monadic function} if $r=1$, \emph{dyadic}\index{dyadic function} if $r=2$, \emph{triadic}\index{triadic function} if $r=3$, and so on), if either $r=0$ and $f$ is linear or $r\geq 1$ and $f$ is $r$-linear, that is, for all integers $i$ with $1\leq i\leq d$, for all $\mathbf{x}_{1}$, $\mathbf{x}_{2}$, \dots , $\mathbf{x}_{i-1}$, $\mathbf{x}$, $\mathbf{y}$, $\mathbf{x}_{i+1}$, \dots , $\mathbf{x}_{r}$ in $\mathbb{R}^{d}$, and for all $\alpha $, $\beta \in \mathbb{R}$ we have
\[
\begin{aligned}
&f(\mathbf{x}_1,\mathbf{x}_2,\dots,\mathbf{x}_{i-1},
\alpha\mathbf{x}+\beta\mathbf{y},\mathbf{x}_{i+1},\dots,\mathbf{x}_r)\\
&\quad=\alpha f(\mathbf{x}_1,\mathbf{x}_2,\dots,\mathbf{x}_{i-1},
\mathbf{x},\mathbf{x}_{i+1},\dots,\mathbf{x}_r)\\
&\quad+\beta f(\mathbf{x}_1,\mathbf{x}_2,\dots,\mathbf{x}_{i-1},
\mathbf{y},\mathbf{x}_{i+1},\dots,\mathbf{x}_r).
\end{aligned}
\]

The number $r$ is called the \emph{rank}\index{rank of a polyadic} of a polyadic. Denote a polyadic
with a capital sans serif letter with a superscript preceding it to indicate
its rank, for example, ${}^{2}\mathsf{T}$ is a dyadic. The set of all
polyadics is a real linear space of dimension $d^{r}$. Indeed, if $r=0$,
then $(\mathbb{R}^{d})^{r}=\mathbb{R}^{1}$, $d^{r}=1$, and a linear
functional $f\colon \mathbb{R}^{1}\rightarrow \mathbb{R}$ has the form $%
f(x_{1})=y_{1}x_{1}$ with $y_{1}\in \mathbb{R}^{1}$. Otherwise, if $r\geq 1$%
, we define a special polyadic as follows.

A \emph{polyad}\index{polyad} (\emph{monad},\index{monad} \emph{dyad},\index{dyad} \emph{triad},\index{triad} \dots ) is the
polyadic $\mathbf{x}_{1}\mathbf{x}_{2}\cdots \mathbf{x}_{r}$ given by
\[
\mathbf{x}_{1}\mathbf{x_{2}}\cdots \mathbf{x}_{r}(\mathbf{y}_{1},\mathbf{%
y_{2}},\dots ,\mathbf{y}_{r})=(\mathbf{x}_{1}\cdot \mathbf{y}_{1})(\mathbf{x}%
_{2}\cdot \mathbf{y}_{2})\cdots (\mathbf{x}_{r}\cdot \mathbf{y}_{r})
\]
for all $(\mathbf{y}_{1},\mathbf{y}_{2},\dots ,\mathbf{y}_{r})\in (\mathbb{R}%
^{d})^{r}$, where $\mathbf{x}_{1}$, $\mathbf{x_{2}}$, \dots , $\mathbf{x}%
_{r} $ are arbitrary vectors in $\mathbb{R}^{d}$.

The $d^{r}$ polyads $\mathbf{e}_{i_{1}}\mathbf{e}_{i_{2}}\cdots \mathbf{e}%
_{i_{r}}$ with $1\leq i_{j}\leq d$ for all $j$ with $1\leq j\leq r$, form a
basis in the linear space of all polyadics. With the \emph{$r$-dot product}
the above basis becomes orthonormal. In particular, if $r=1$, then the $1$%
-dot product of two monads $\mathbf{x}$ and $\mathbf{y}$ coincides with the
scalar product $\mathbf{x\cdot y}$, the $2$-dot product of two dyads is $%
\mathbf{x}_{1}\mathbf{x_{2}}\colon \mathbf{y}_{1}\mathbf{y_{2}}=(\mathbf{x}%
_{1}\cdot \mathbf{y}_{1})(\mathbf{x}_{2}\cdot \mathbf{y}_{2})$, and the $3$%
-dot product of two triads by $\mathbf{x}_{1}\mathbf{x}_{2}\mathbf{x}%
_{3}\,\vdots \,\mathbf{y}_{1}\mathbf{y}_{2}\mathbf{y}_{3}=(\mathbf{x}%
_{1}\cdot \mathbf{y}_{1})(\mathbf{x}_{2}\cdot \mathbf{y}_{2})(\mathbf{x}%
_{3}\cdot \mathbf{y}_{3})$. A polyad $\mathbf{x}_{1}\mathbf{x_{2}}\cdots
\mathbf{x}_{r}$ is identified with the tensor product $\mathbf{x}_{1}\otimes
\mathbf{x_{2}}\otimes \cdots \otimes \mathbf{x}_{r}$ and the linear space of
all polyadics with the tensor product $(\mathbb{R}^{d})^{\otimes r}$ of $r$%
~copies of the space $\mathbb{R}^{d}$. We can prove that, for $r\geq 1$, any
$r$-adic can be represented as a sum of not more than $d^{r-1}$ $r$-ads.

If, under a shift map $\mathbb{R}^{3}\rightarrow \mathbb{R}^{3}$, $\mathbf{x}%
\mapsto \mathbf{x}+\mathbf{y}$, the \emph{one-point \emph{correlation}
polyadic} $\langle {}^{r}\mathsf{T}(\mathbf{x})\rangle $ and the \emph{%
two-point covariance polyadic}
\[
\langle {}^{r}\mathsf{T}(\mathbf{x}),{}^{r}\mathsf{T}(\mathbf{y})\rangle
=\left\langle ({}^{r}\mathsf{T}(\mathbf{x})-\langle {}^{r}\mathsf{T}(\mathbf{%
x})\rangle ),({}^{r}\mathsf{T}(\mathbf{y})-\langle {}^{r}\mathsf{T}(\mathbf{y}%
)\rangle )\right\rangle  \label{correlation polyadics}
\]
do not change, a second-order TRF is WSS. It is WSSI if its correlation
polyadics satisfy
\[
\langle {}^{r}\mathsf{T}(g\mathbf{x})\rangle =g\cdot \langle {}^{r}\mathsf{T}%
(\mathbf{x})\rangle ,\qquad \langle ^{r}\mathsf{T}(g\mathbf{x}),{}^{r}%
\mathsf{T}(g\mathbf{y})\rangle =(\rho \otimes \rho )(g)\langle {}^{r}\mathsf{%
T}(g\mathbf{x}),{}^{r}\mathsf{T}(g\mathbf{y})\rangle ,
\label{WSSI correlation polyadics}
\]
where $g$ $\mapsto \mathsf{S}^{2}(g)$ is an orthogonal representation of the
group $\mathrm{O}(3)$. Some open issues are:

While it is clear how to construct the polyadic TRFs from scalar random
fields, the resulting statistics and correlation structures are open
research topics.

\section{Random cross-sections of homogeneous vector bundles}\label{sec:sections}

This section is motivated by the following observation: the Cosmic Microwave Background can be described mathematically as a random cross-section of a homogeneous vector bundle over the sky sphere, see \cite{MR3170229,MR2737761,MR2884225,MR2840154,preprint2021}.

A \emph{random cross-section}\index{cross-section!random} of a homogeneous vector bundle $(E,B,\pi)$ is a collection of random vectors $\{\,s(x,\omega)\colon x\in B\,\}$ such that $s(x,\omega)$ is a random vector in $\pi^{-1}(x)$. In what follows, we use the terms ``random cross-section''  and ``random field'' for $s(x)$ interchangeably.

\begin{definition}
A random cross-section $s(x)$ of a homogeneous vector bundle $(E,B,\pi)$ is called \emph{strictly isotropic}\index{cross-section!random!strictly isotropic} if for any positive integer $n$, for any $n$ distinct points $x_1$, \dots, $x_n$ in $B$, and for arbitrary $g\in G$, the finite-dimensional distributions $(s(x_1),\dots,s(x_n))$ and $(s(gx_1),\dots,s(gx_n))$ are identical.
\end{definition}

To simplify the subsequent notation, introduce the following agreements. Let $tL=L$ and $t\mathbf{l}=\mathbf{l}$ whenever $\mathbb{K}\in\{\mathbb{R},\mathbb{H}\}$. For a \emph{complex} Hilbert space $V$ with inner product $(\cdot,\cdot)$, and for $\mathbf{v}\in V$, let $t\mathbf{v}$ be the element of the space $tV$ satisfying $(t\mathbf{v},\mathbf{v}_1)=(\mathbf{v}_1,\mathbf{v})$ for each $\mathbf{v}_1\in V$.

Assume that the random cross-section $s(x)$ of a homogeneous vector bundle $(E,B,\pi)$ with fibre $L_0$ is \emph{second-order},\index{cross-section!random!second-order} that is, $\mathsf{E}[\|s(x)\|^2_{L_0}]<\infty$, $x\in B$. If, in addition, such a field is strictly isotropic, then it is wide-sense isotropic.

\begin{definition}
A second-order random cross-section $s(x)$ is called \emph{wide-sense isotropic}\index{cross-section!random!wide-sense isotropic} if and only if its one-point correlation tensor\index{correlation tensor!one-point} $\langle s(x)\rangle=\mathsf{E}[s(x)]$ and the two-point correlation tensor\index{correlation tensor!two-point}
$\langle s(x),s(y)\rangle=\mathsf{E}[t(s(x)-\langle s(x)\rangle)\otimes(s(y)-\langle s(y)\rangle)]$ are $G$-invariant, that is, for any $g\in G$ we have
\begin{equation}\label{eq:41}
\langle s(gx)\rangle=\langle s(x)\rangle,\qquad
\langle s(gx),s(gy)\rangle=\langle s(x),s(y)\rangle.
\end{equation}
\end{definition}

Note that the one-point correlation tensor of a  random field $s(x)$ is $\mathbb{K}$-valued, while the two-point one is $\mathbb{K}'$-valued. See Section~\ref{sec:language} for explanation of the above symbols.

In what follows, we consider only wide-sense isotropic random fields in homogeneous vector bundles and call them just isotropic.

The next step is to define a mean-square continuous random field in a homogeneous vector bundle. The standard definition, Equation~\eqref{eq:33}, does not work, because if $x$, $y\in B$ and $x\neq y$, then the random vectors $s(x)$ and $s(y)$ belong to different linear spaces and cannot be subtracted one from another. See a discussion of this problem in \cite{MR2884225,MR2977490}. Here we propose an alternative definition.

Let $L^2(\Omega,L_0)$ be the Hilbert space of $L_0$-valued random vectors $\mathbf{X}$ with $\mathsf{E}[\|\mathbf{X}\|^2_{L_0}]<\infty$ and inner product
\begin{equation}\label{eq:34}
(\mathbf{X},\mathbf{Y})=\mathsf{E}[(\mathbf{X},\mathbf{Y})_{L_0}].
\end{equation}
Let $H$ be the closed linear span of the set of random vectors $\{\,s(x)\colon x\in B\,\}$ in $L^2(\Omega,L_0)$. For each $g\in G$, there is a unique bounded linear operator $A(g)$ in $H$ with $A(g)T(x)=s(g^{-1}x)$. Moreover, the correspondence $g\mapsto A(g)$ is a representation of $G$ in $H$.

\begin{definition}
A random cross-section $s(x)$ is called \emph{mean-square continuous},\index{cross-section!random!mean-square continuous} if the above representation is continuous.
\end{definition}

Moreover, it is not difficult to prove that a random field $s(x)$ with constant one-point correlation tensor is isotropic if and only if the inner product \eqref{eq:34} is $G$-invariant.

How to find a spectral expansion of an isotropic random cross-section? First, exactly as in the case of ordinary random fields, we conclude that the one-point correlation tensor $\langle s(x)\rangle$ is an arbitrary element of the isotypic subspace of the space $L_0$ that corresponds to the trivial representation of the group $H$.

Let ${}_{L^n,p}Y_{ijkLm}(x)$ be the cross-sections of the orthonormal basis constructed in Subsection~\ref{sub:basis}. In this notation, the index $L^n$ runs over the set of all irreducible representations of the group $H$, for which $\Hom_{\mathbb{K}H}(L^n,L_0)\neq\{\mathbf{0}\}$, the index $p$ enumerates the copies of the representation $L^n$ inside $L_0$, the index $L$ runs over the set of all irreducible representations of the group $G$, for which $\Hom_{\mathbb{K}H}(\res^G_HL,L^n)\neq\{\mathbf{0}\}$, the index $i$ enumerates the copies of the representation $L^n$ inside the representation $\Hom_{\mathbb{K}H}(\res^G_HL,L^n)$, the index $j$ enumerates identical blocks inside a certain matrix, the index $k$ runs from $1$ to the dimension of a certain division algebra over $\mathbb{K}'$, and the index $m$ enumerates the basis vectors in $L$.

Let $d_L$ be the multiplicity of the representation $L$ in the induced representation $L^2(E)$, and let $\{\,{}_{L^n,p}\mathbf{a}_{Lm}\colon L\in\hat{G}_{\mathbb{K}},1\leq m\leq\dim L\,\}$ be a sequence of centred uncorrelated $\mathbb{K}^{d_L}$-valued random vectors such that the covariance matrix of the vector ${}_{L^n,p}\mathbf{a}_{Lm}$ does not depend on $m$ and for fixed $L^n$ and $p$ we have
\[
\sum_{L\in\hat{G}_{\mathbb{K}}}\dim L\mathsf{E}[\|{}_{L^n,p}\mathbf{a}_{L1}\|^2]<\infty.
\]

\begin{theorem}\label{th:section}
A random field $s(x)$ is isotropic if and only if it has the form
\[
s(x)=s+\sum{}_{L^n,p}Y_{ijkLm}(x)({}_{L^n,p}\mathbf{a}_{Lm})_{ijk}.
\]
\end{theorem}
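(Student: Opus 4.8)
The plan is to combine the isotypic decomposition of the Hilbert space $L^2(E)$ with the Karhunen theorem (Theorem~\ref{th:Karhunen}), using Schur's Lemma (Lemma~\ref{lem:Schur}) to pin down the covariance structure of the Fourier coefficients. First I would pass to the centred field: since the one-point correlation tensor $s=\langle s(x)\rangle$ is a $G$-fixed, hence $H$-fixed, element of the fibre (an element of the isotypic subspace of $L_0$ attached to the trivial representation of $H$), I replace $s(x)$ by $s(x)-s$ and prove the statement for the centred field, restoring $s$ at the end. Mean-square continuity guarantees that the representation $g\mapsto A(g)$ on the closed span $H\subset L^2(\Omega,L_0)$ is continuous, so that the whole analysis takes place inside the induced representation, whose isotypic decomposition is governed by the Frobenius reciprocity and is diagonalised by the basis $\{{}_{L^n,p}Y_{ijkLm}\}$.

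For the ``only if'' direction I would define the Fourier coefficients $({}_{L^n,p}\mathbf{a}_{Lm})_{ijk}$ as the projections of $s$ onto the basis cross-sections and read off their second-order statistics. As noted after Equation~\eqref{eq:34}, isotropy of the centred field is equivalent to $G$-invariance of the inner product \eqref{eq:34}; equivalently, the covariance operator of $s$ intertwines the $G$-action. Because the basis is adapted to the isotypic decomposition, Schur's Lemma forces this covariance operator to vanish between non-isomorphic irreducible components and to act as a scalar of the relevant division algebra $\mathbb{K}'$ within each isotypic block. Translating this into the multi-index language yields exactly the three asserted properties: the coefficients attached to distinct irreducibles $L$ (and to distinct basis vectors indexed by $m$) are uncorrelated, and the covariance matrix of ${}_{L^n,p}\mathbf{a}_{Lm}$ is independent of $m$. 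The expansion itself then follows by applying the Karhunen theorem with a discrete index set $\Lambda$ indexing $(L^n,p,i,j,k,L)$ and $a(x,\lambda)$ equal to the corresponding basis cross-section; the summability hypothesis on the variances is precisely the condition that the associated control measure be finite, which guarantees mean-square convergence.

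For the ``if'' direction I would substitute the assumed expansion into the two-point correlation tensor and compute $\langle s(gx),s(gy)\rangle$. Using the uncorrelatedness of the coefficients and the $m$-independence of their covariances, the double sum collapses to single sums of the form $\sum_m {}_{L^n,p}Y_{ijkLm}(gx)\otimes\overline{{}_{L^n,p}Y_{ijkLm}(gy)}$; since the basis vectors indexed by $m$ span one copy of $L$ and transform by the unitary matrix of $L$, this partial sum is a reproducing kernel for that copy and hence $G$-invariant (an addition-theorem identity following from the transformation law of the basis). Summing over the remaining indices gives $\langle s(gx),s(gy)\rangle=\langle s(x),s(y)\rangle$, i.e.\ the second equation in \eqref{eq:41}; invariance of the one-point tensor is immediate, and convergence of the series is again controlled by the summability assumption.

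The hardest part will be the Schur-Lemma step in the quaternionic case. For real and complex fibres the commutant of an irreducible is $\mathbb{R}$ or $\mathbb{C}$ and the covariance is genuinely scalar, but for an irreducible of quaternionic type the real algebra of intertwiners between two copies has the subtler structure described in the new item of Lemma~\ref{lem:Schur}; carrying this through correctly is what produces the index $k$ running over the dimension of the division algebra over $\mathbb{K}'$ and what makes the covariance ``block-scalar'' rather than scalar. Getting the bookkeeping of the indices $(i,j,k)$ to match the explicit Frobenius-reciprocity basis~--- so that ``independent of $m$'' and ``uncorrelated across $L$ and $m$'' are asserted for the right objects~--- is the main source of technical difficulty, and is where the unified treatment of the three representation types must be handled with care.
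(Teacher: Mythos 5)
Your proposal is correct and takes essentially the same route as the paper: both work in the basis ${}_{L^n,p}Y_{ijkLm}$ adapted to the isotypic decomposition of $L^2(E)$, translate isotropy into an intertwining property of the covariance, and apply Schur's Lemma (Lemma~\ref{lem:Schur}, including its new quaternionic item) to force the block-scalar structure~--- uncorrelated coefficients across $L$ and $m$, with covariance of ${}_{L^n,p}\mathbf{a}_{Lm}$ independent of $m$. The only presentational difference is that the paper encodes this by expanding the two-point correlation tensor as a cross-section of the homogeneous bundle over $B\times B$ induced by $tL_0\otimes L_0$ (which makes the twist $C_{LL'}=0$ unless $tL=L'$ explicit), whereas you argue via the covariance operator and fill in, through Theorem~\ref{th:Karhunen} and an addition-theorem identity, the steps the paper leaves as ``obvious''.
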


\begin{proof}
Let $(E,\pi,B\times B)$ be the homogeneous vector bundle where the representation of the group $G\times G$ induced by the representation $tL_0\otimes L_0$ of the subgroup $H\times H$ acts. Observe that the two-point correlation tensor
$\langle s(x),s(y)\rangle$ of the random field $s(x)$ is a continuous cross-section of the above bundle. The cross-sections $t{}_{L^n,p}Y_{ijkLm}(x)\otimes{}_{L'^n,p'}Y_{i'j'k'L'm'}(y)$
form an orthonormal basis in the Hilbert space $L^2(E)$.

We note that $t{}_{L^n,p}Y_{ijkLm}(x)={}_{L^n,p}Y_{ijk(tL)m}(x)$, and, for a fixed $L\in\hat{G}_{\mathbb{K}}$, the functions ${}_{L^n,p}Y_{ijkLm}(x)$ form an orthonormal basis in the isotypic subspace of the space $L^2(E)$ that correspond to $L$, call it $\mathcal{H}_L$. Moreover, by the construction of this basis, for fixed $L^n$, $p$, $i$, $j$, and $k$, we have
\[
{}_{L^n,p}Y_{ijkLm}(x)=\sum_{q=1}^{\dim L}\theta^L_{mq}(g){}_{L^n,p}Y_{ijkLq}(x).
\]
Let $C_{LL'}$ be the matrix of the Fourier coefficients of the cross-section $\langle s(x),s(y)\rangle$ in the above basis. The second equation in condition~\eqref{eq:41} becomes
\[
C_{LL'}\theta^L=\theta^{L'}C_{LL'},
\]
that is, $C_{LL'}\in\Hom_{\mathbb{K}G}(\mathcal{H}_L,\mathcal{H}_{L'})$. Schur's Lemma immediately shows that $C_{LL'}$ is the zero matrix whenever $tL\neq L'$. Otherwise, the matrix $C^{tLL}$ is symmetric when $\mathbb{K}'=\mathbb{R}$, Hermitian when $\mathbb{K}'=\mathbb{C}$, consists of square sub-matrices with $\dim V$ rows, and each sub-matrix is block-diagonal, with blocks described in Subsection~\ref{sub:basis}. Because the blocks of sizes $2\times 2$ and $4\times 4$ are skew-symmetric, they must be diagonal and multiple to the identity matrix. In the space $\mathcal{H}_L$, the Fourier coefficients form a square matrix with $d^2_L$ blocks, where each block is a multiple of the $\dim L\times \dim L$ identity matrix. The elements of each block which are located at the intersection of the $m$th row and the $m$th column, form the covariance matrix of the random vector ${}_{L^n,p}\mathbf{a}_{Lm}$. The rest is obvious.
\end{proof}

\section{Applications to cosmology}\label{sec:cosmology}

We make a simplification in Example~\ref{ex:standard}. Suppose that the spacetime~$M$ is the Minkowski one, that is, the nonzero components of the bilinear form $g(x)$ are $g_{00}(x)=-c^2$, $g_{11}(x)=g_{22}(x)=g_{33}(x)=1$, where $c$ is the speed of light. Physically, this means that we neglect possible secondary effects of the CMB (the effects that appear after the last scattering of the CMB photons).

Consider an observer at the event $\mathbf{0}\in M$. Light rays through her eye correspond to null straight lines through $\mathbf{0}$. Their past directions with $t<0$ constitute her field of vision. She can imagine herself at the centre of her sphere of vision, a unit sphere $S^2$, the celestial sphere, see Example~\ref{ex:standard}. It can be regarded in different ways.

\begin{example}[The $(\Theta,\mathcal{Q}\pm\mathrm{i}\mathcal{U},\mathcal{V})$ model]
Regard the celestial sphere as the complex projective line $\mathbb{C}P^1$. Let $T_{\zeta}\mathbb{C}P^1$ be the tangent space at a point $\zeta\in\mathbb{C}P^1$. According to Example~\ref{ex:standard}, the electric field of the CMB at the point $\zeta$ belongs to the real linear space  $rT_{\zeta}\mathbb{C}P^1$. The space $crT_{\zeta}\mathbb{C}P^1$ is the \emph{complexified tangent space}\index{tangent space!complexified} at $\zeta\in\mathbb{C}P^1$, see \cite{MR2102340}. We have $crT_{\zeta}\mathbb{C}P^1=T^{1,0}_{\zeta}\mathbb{C}P^1\oplus T^{0,1}_{\zeta}\mathbb{C}P^1$, where the first (resp. the second) term is the \emph{holomorphic}\index{tangent space!holomorphic} (resp. \emph{anti-holomorphic})\index{tangent space!anti-holomorphic} \emph{tangent space} at $\zeta\in\mathbb{C}P^1$. The projection $\pi(crT_{\zeta}\mathbb{C}P^1)=\zeta$ determines a vector bundle, which is the vector bundle $(\Sigma\mathbb{C}P^1,\mathbb{C}P^1,\pi)$ with fibre $V_1\oplus tV_1$ of Example~\ref{ex:spinweight}. That is, the electric field of the CMB is a spinor field.

Identify a point $\zeta\in\mathbb{C}P^1$ with the point $\mathbf{x}\in S^2\subset\mathbb{R}^3$ with coordinates given by \cite[Equation~(1.2.8)]{MR917488}:
\[
x=\frac{\zeta+\zeta^*}{\zeta\zeta^*+1},\qquad y=\frac{\zeta-\zeta^*}{\mathrm{i}(\zeta\zeta^*+1)},\qquad z=\frac{\zeta\zeta^*-1}{\zeta\zeta^*+1},
\]
where the point at infinity $\zeta=\infty$ is identified with the north pole of the sphere. If we observe an electromagnetic wave of the CMB at a point $\mathbf{x}\in S^2$, it propagates in the direction $\mathbf{n}=-\mathbf{x}$.
Choose two vectors $\bm{\varepsilon}^{(1)}$ and $\bm{\varepsilon}^{(2)}$ in the space $crT_{\zeta}\mathbb{C}P^1$ in such a way that the vectors $(\bm{\varepsilon}^{(1)},\bm{\varepsilon}^{(2)},\mathbf{n})$ form a right-handed orthonormal basis. The pair $(\bm{\varepsilon}^{(1)},\bm{\varepsilon}^{(2)})$ is called the \emph{linear polarisation basis}.\index{polarisation basis!linear} It is customary to represent the electric field of the wave in the \emph{complex form}
\[
\mathbf{E}(t)=E_1(t)\bm{\varepsilon}^{(1)}+E_2(t)\bm{\varepsilon}^{(2)},\qquad E_1(t),E_2(t)\in\mathbb{C}.
\]
According to \cite[p.~305]{MR1651651}:
\begin{quote}
\dots only the real part of the fields has a physical meaning. We make use of the complex notation because many formulas are thereby simplified\dots
\end{quote}

A CMB detector observes the time-averaged electric field $\mathbf{E}=E_1\bm{\varepsilon}^{(1)}+E_2\bm{\varepsilon}^{(2)}$. Following \cite{durrer_2020}, introduce the Hermitian $2\times 2$ matrix $\tilde{\mathcal{P}}=\left(
\begin{smallmatrix}
  |E_1|^2 & E_1^*E_2 \\
  E_2^*E_1 & |E_2|^2
\end{smallmatrix}
\right)$. The standard orthonormal basis in the real $4$-dimensional linear space of Hermitian $2\times 2$ matrices with inner product $(\tilde{\mathcal{P}}_1,\tilde{\mathcal{P}}_2)=\tr(\tilde{\mathcal{P}}_1\tilde{\mathcal{P}}_2)$ consists of the matrices $\frac{1}{\sqrt{2}}\sigma^{(i)}$, $0\leq i\leq 3$, where $\sigma^{(i)}$ are the \emph{Pauli matrices}\index{Pauli matrices} $\sigma^{(0)}=\left(
\begin{smallmatrix}
  1 & 0 \\
  0 & 1
\end{smallmatrix}
\right)$, $\sigma^{(1)}=\left(
\begin{smallmatrix}
  0 & 1 \\
  1 & 0
\end{smallmatrix}
\right)$, $\sigma^{(2)}=\left(
\begin{smallmatrix}
  0 & -\mathrm{i} \\
  \mathrm{i} & 0
\end{smallmatrix}
\right)$, $\sigma^{(3)}=\left(
\begin{smallmatrix}
  1 & 0 \\
  0 & -1
\end{smallmatrix}
\right)$. The expansion coefficients are called the \emph{Stokes parameters}\index{Stokes parameters} and defined as follows:
\begin{equation}\label{eq:45}
\begin{aligned}
I(\zeta)&=\tr(\tilde{\mathcal{P}}(\zeta)\sigma^{(0)}), & U(\zeta)&=\tr(\tilde{\mathcal{P}}(\zeta)\sigma^{(1)}),\\ V(\zeta)&=\tr(\tilde{\mathcal{P}}(\zeta)\sigma^{(2)}), & Q(\zeta)&=\tr(\tilde{\mathcal{P}}(\zeta)\sigma^{(3)}).
\end{aligned}
\end{equation}
The matrix $\tilde{\mathcal{P}}$ takes the form
\[
\tilde{\mathcal{P}}(\zeta)=\frac{1}{2}(I(\zeta)\sigma^{(0)}
+U(\zeta)\sigma^{(1)}+V(\zeta)\sigma^{(2)}+Q(\zeta)\sigma^{(3)}).
\]

The matrix $\tilde{\mathcal{P}}$ lies in the space
\begin{equation}\label{eq:47}
L_0=t(V_1\oplus tV_1)\otimes(V_1\oplus tV_1)=(tV_1\oplus V_1)\otimes(V_1\oplus tV_1)=2cU_0\oplus V_2\oplus tV_2,
\end{equation}
and the group $H$, the subgroup of diagonal matrices in $\mathrm{SU}(2)$, acts on $\tilde{\mathcal{P}}$ by $h\cdot\tilde{\mathcal{P}}=h\tilde{\mathcal{P}}h^{-1}$.

The theory of irreducible unitary representations of the group $\mathrm{SU}(2)$ is developed in numerous sources, we mention \cite{MR889252,MR1153249,MR0047664,MR3969956,MR793377} among others. The set $\hat{G}_{\mathbb{C}}$ contains the unitary irreducible representations of real type\index{representation!irreducible complex of $\mathrm{SU}(2)$!$cU_{\ell}$} $\{\,cU_{\ell}\colon\ell\geq 0\,\}$ and of quaternionic type\index{representation!irreducible complex of $\mathrm{SU}(2)$!$c'W_{\ell}$} $\{\,c'W_{\ell}\colon\ell=\frac{1}{2},\frac{3}{2},\dots\,\}$. To determine the structure of the restrictions $\res^G_HcU_{\ell}$ and $\res^G_Hc'W_{\ell}$, we use the \emph{characters}.

By definition, the \emph{character}\index{character} of a complex finite-dimensional representation $V$ is a function $G\to\mathbb{C}$, given by $\chi_V(g)=\tr\theta^V(g)$. The character of a real finite-dimensional representation $U$ (resp. quaternionic finite-dimensional representation $W$) is the character of the complex representation $cU$ (resp. $c'W$). We have $\chi_{V_1\oplus V_2}(g)=\chi_{V_1}(g)+\chi_{V_2}(g)$, and similarly for real and quaternionic representations.

According to \cite{MR1143783}, the restriction of the character of an irreducible representations with index $\ell$ to the subgroup $H$ is
\[
\chi_{cU_{\ell}}\left(\left(
\begin{smallmatrix}
  \alpha & 0 \\
  0 & \alpha^*
\end{smallmatrix}
\right)\right)=\chi_{c'W_{\ell}}\left(\left(
\begin{smallmatrix}
  \alpha & 0 \\
  0 & \alpha^*
\end{smallmatrix}
\right)\right)=\alpha^{2\ell}+\alpha^{-2\ell-2}
+\cdots+\alpha^{-2\ell}.
\]
In other words, the restriction $\res^G_HcU_{\ell}$ has the form
\begin{equation}\label{eq:35}
\res^G_HcU_{\ell}=tV_{2\ell}\oplus tV_{2\ell-2}\oplus\cdots\oplus tV_2\oplus cU_0\oplus V_2\oplus\cdots\oplus V_{2\ell},
\end{equation}
while the restriction $\res^G_Hc'W_{\ell}$ has the form
\begin{equation}\label{eq:36}
\res^G_Hc'W_{\ell}=tV_{2\ell}\oplus tV_{2\ell-2}\oplus\cdots\oplus tV_1\oplus V_1\oplus\cdots\oplus V_{2\ell}.
\end{equation}

The first copy of the irreducible representation $cU_0$ of the group $H$ acts in the complex linear space $L_{01}$ generated by the matrix $\frac{1}{\sqrt{2}}\sigma^{(0)}$. Schur's Lemma gives $D=\Hom_{\mathbb{C}H}(cU_0,cU_0)=\mathbb{C}$, and we have $\dim_{\mathbb{C}}D=1$. By Equations \eqref{eq:35} and \eqref{eq:36}, $\dim_{\mathbb{C}}\Hom_{\mathbb{C}H}(\res^G_HcU_{\ell},cU_0)=1$, while $\dim_{\mathbb{C}}\Hom_{\mathbb{C}H}(\res^G_Hc'W_{\ell},cU_0)=0$. The space $\Hom_{\mathbb{C}H}(\res^G_HcU_{\ell},cU_0)$ contains $n_{cU_{\ell}}=1$ copy of the representation $cU_0$, while the space $\Hom_{\mathbb{C}H}(\res^G_Hc'W_{\ell},cU_0)$ contains no copies of the above representation. The matrix $f^{111}$ has $1$ row and $2\ell+1$ columns. We have $f^{111}_{1\,\ell+1}=1$, the remaining entries are zeroes.

Choose a model for $cU_{\ell}$,\index{representation!irreducible complex of $\mathrm{SU}(2)$!$cU_{\ell}$} described in \cite{MR1143783} and in many other sources. The linear space $cU_{\ell}$ is the space of all polynomials in a real variable $x$ of degree $2\ell$. The group $G$ acts by
\[
\left(
\begin{smallmatrix}
  \alpha & \beta \\
  -\beta^* & \alpha^*
\end{smallmatrix}
\right)\cdot p(x)=(\beta x+\alpha^*)^{2\ell}p\left(\frac{\alpha x-\beta^*}{\beta x+\alpha^*}\right),\qquad p\in cU_{\ell}.
\]
The orthonormal basis in $cU_{\ell}$ is formed by the polynomials
\[
p_m(x)=\frac{x^{\ell-m}}{\sqrt{(\ell-m)!(\ell+m)!}},\qquad-\ell\leq m\leq\ell.
\]
The matrix $\left(
\begin{smallmatrix}
  \alpha & 0 \\
  0 & \alpha^*
\end{smallmatrix}
\right)\in H$ acts on $p_m(x)$ by multiplication by $\alpha^{-2m}$, that is, by one of the representations in the right hand side of \eqref{eq:35}. The Frobenius reciprocity isomorphism maps the matrix $f^{111}$ to the function  $F^{111}\colon cU_{\ell}\to C(G,L_{01})$. This function maps an element $g\in G$ to the zeroth component of the vector $g^{-1}\cdot p\in cU_{\ell}$ for $p\in U_{\ell}$. In particular, $F^{111}(p_m)$ is the $L_{01}$-valued function on $G$ given by $D^{*(\ell)}_{m0}(g)\frac{1}{\sqrt{2}}\sigma^{(0)}$, where $D^{(\ell)}_{m0}(g)$ are the \emph{Wigner $D$-functions},\index{Wigner $D$-functions} the matrix entries of the representations $cU_{\ell}$. After normalisation, the cross-sections ${}_{L_{01}}Y_{ijkLm}(gH)$ become
\[
{}_{L_{01}}Y_{ijkLm}(gH)=\sqrt{\frac{2\ell+1}{4\pi}}D^{*(\ell)}_{m0}(gH)\frac{1}{\sqrt{2}}\sigma^{(0)},
\]
and we recognise the \emph{spherical harmonics}\index{spherical harmonics!complex-valued} $Y_{\ell,m}(\zeta)$, see \cite[Equation~(A.4.43)]{durrer_2020}. Theorem~\ref{th:section} gives
\begin{equation}\label{eq:42}
I(\zeta)=\sum_{\ell=0}^{\infty}\sum_{m=-\ell}^{\ell}a_{\ell m}Y_{\ell,m}(\zeta),
\end{equation}
where $a_{\ell m}$ are complex-valued uncorrelated random variables with $\mathsf{E}[a_{\ell m}]=0$ for $\ell\geq 1$, $\mathsf{E}[|a_{\ell m}|^2]=C_{\ell}$, and
\[
\sum_{\ell=0}^{\infty}(2\ell+1)C_{\ell}<\infty.
\]

Note that we absorbed the constant cross-section $s$ into the expected value of $a_{00}$. The expansion \eqref{eq:42} was proved in \cite{Obukhov1947} long before the discovery of the relic radiation. The spherical harmonics $Y_{\ell,m}(\zeta)$ have the form
\[
\begin{aligned}
Y_{\ell,m}(\zeta)&=\frac{(-1)^{\ell-m}}{\sqrt{4\pi}\ell!}\sqrt{(2\ell+1)(\ell+m)!(\ell-m)!}
(1+\zeta\zeta^*)\\
&\quad\times\sum_{p=\max\{0,m\}}^{\ell}\binom{\ell}{p}\binom{\ell}{p-m}\zeta^p
(\zeta^*)^{p-m}.
\end{aligned}
\]
It follows from this equation that $Y_{\ell,-m}(\zeta)=(-1)^mY_{\ell,m}^*(\zeta)$. We see that the random field $I(\zeta)$ is real-valued if and only if $a_{\ell\,-m}=(-1)^ma^*_{\ell m}$.

The Stokes parameters, as defined above, have units of intensity $I$. It is conventional to express the CMB fluctuations in terms of the \emph{brightness temperature},\index{brightness temperature} $T$. Denote the Stokes parameters after re-scaling by $T$, $U_T$, $V_T$, and $Q_T$. Denote $T_0=\mathsf{E}[a_{00}]$, the averaged temperature of the CMB. Finally, the \emph{dimensionless Stokes parameters}\index{Stokes parameters!dimensionless} are
\[
\Theta=\frac{T-T_0}{T_0},\qquad\mathcal{U}=\frac{U_T}{T_0},\qquad\mathcal{V}
=\frac{V_T}{T_0},\qquad\mathcal{Q}=\frac{Q_T}{T_0}.
\]
Note that our definition of $\mathcal{U}$ and $\mathcal{Q}$ differs from \cite[Equation~(5.8)]{durrer_2020} by a factor of $\frac{1}{4}$.

The number $\ell$ is called the \emph{multipole number}\index{multipole number} (the \emph{monopole}\index{monopole} if $\ell=0$, the \emph{dipole}\index{dipole} if $\ell=1$). The number $m$ is called the \emph{azimuthal number},\index{azimuthal number} the number $C_{\ell}$ the \emph{multipole moment}.\index{multipole moment} The \emph{multipole expansion}\index{multipole expansion} of the dimensionless temperature deviation $\Theta(\zeta)$ generally starts at $\ell=2$:
\[
\Theta(\zeta)=\sum_{\ell=2}^{\infty}\sum_{m=-\ell}^{\ell}a^{\Theta}_{\ell m}Y_{\ell,m}(\zeta).
\]
This is because with our definition of $\Theta$, the monopole term vanishes, while the dipole term is affected by our own motion across space and dominates over the intrinsic cosmological dipole term.

The analysis for $\mathcal{V}$ is similar and gives
\[
\mathcal{V}(\zeta)=\sum_{\ell=0}^{\infty}\sum_{m=-\ell}^{\ell}a^{\mathcal{V}}_{\ell m}Y_{\ell,m}(\zeta).
\]

Denote
\[
\mathcal{P}(\zeta)=\frac{1}{2}(\mathcal{U}(\zeta)\sigma^{(1)}+\mathcal{Q}(\zeta)\sigma^{(3)})
=\frac{1}{2}
\begin{pmatrix}
  \mathcal{Q}(\zeta) & \mathcal{U}(\zeta) \\
  \mathcal{U}(\zeta) & -\mathcal{Q}(\zeta)
\end{pmatrix}
.
\]
Rotate the basis $\bm{\varepsilon}^{(1)}$ and $\bm{\varepsilon}^{(2)}$ of the space $crT_{\zeta}\mathbb{C}P^1$ by an angle $\psi$ around the direction $\mathbf{n}$. The new basis is
\[
\bm{\varepsilon}^{(1)'}=\cos\psi\bm{\varepsilon}^{(1)}+\sin\psi\bm{\varepsilon}^{(2)},
\qquad\bm{\varepsilon}^{(2)'}=-\sin\psi\bm{\varepsilon}^{(1)}+\cos\psi\bm{\varepsilon}^{(2)}.
\]
In the rotated basis, the coefficients of the electric field become
\[
E_1=E_1\cos+E_2\sin\psi,\qquad E_2'=-E_1\sin\psi+E_2\cos\psi.
\]
Equation~\eqref{eq:45} gives
\[
I=|E_1|^2+|E_2|^2,\quad Q=|E_1|^2-|E_2|^2,\quad U=2\RE(E^*_1E_2),\quad V=2\IM(E^*_1E_2),
\]
and we see that under the above rotation
\[
I'=I,\quad V'=V,\quad Q'=Q\cos(2\psi)-U\sin(2\psi),\quad U'=U\cos(2\psi)+Q\sin(2\psi).
\]
It follows that $Q'\pm\mathrm{i}U'=\mathrm{e}^{\pm 2\mathrm{i}\psi}(Q+\mathrm{i}U)$. In other words, $O+\mathrm{i}U\in V_2$, $Q-\mathrm{i}U\in tV_2$.

The irreducible representation $V_2$ of the group $H$ acts in the complex linear space $L_2$ generated by the matrix $\frac{1}{2}(\sigma^{(3)}+\mathrm{i}\sigma^{(1)})$. Schur's Lemma gives $D=\Hom_{\mathbb{C}H}(V_2,V_2)=\mathbb{C}$, and we have $\dim_{\mathbb{C}}D=1$. By Equations \eqref{eq:35} and \eqref{eq:36}, $\dim_{\mathbb{C}}\Hom_{\mathbb{C}H}(\res^G_HcU_{\ell},V_2)=1$ for $\ell\geq 2$, while $\dim_{\mathbb{C}}\Hom_{\mathbb{C}H}(\res^G_Hc'W_{\ell},V_2)=0$. The space $\Hom_{\mathbb{C}H}(\res^G_HcU_{\ell},V_2)$ contains $n_{cU_{\ell}}=1$ copy of the representation $V_2$, while the space $\Hom_{\mathbb{C}H}(\res^G_Hc'W_{\ell},cU_0)$ contains no copies of the above representation. The matrix $f^{111}$ has $1$ row and $2\ell+1$ columns. We have $f^{111}_{1\,\ell+1}=1$, the remaining entries are zeroes.

This time, the Frobenius reciprocity isomorphism maps the matrix $f^{111}$ to the function $F^{111}\colon V_2\to C(G,L_2)$. This function maps an element $g\in G$ to the zeroth component of the vector $g^{-1}\cdot p\in V_2$ for $p\in U_{\ell}$. In particular, $F^{111}(p_m)$ is the $L_2$-valued function on $G$ given by $D^{*(\ell)}_{m2}(g)\frac{1}{2}(\sigma^{(3)}+\mathrm{i}\sigma^{(1)})$. After normalisation, the cross-sections ${}_{L_2}Y_{ijkLm}(gH)$ become
\[
{}_{L_2}Y_{ijkLm}(gH)=\sqrt{\frac{2\ell+1}{4\pi}}D^{*(\ell)}_{m2}(gH)
\frac{1}{2}(\sigma^{(3)}+\mathrm{i}\sigma^{(1)}).
\]
with the help of \cite[Equation~(A.4.95)]{durrer_2020}, we recognise the \emph{spin 2 spherical harmonics}\index{spherical harmonics!spin 2!complex-valued} ${}_2Y_{\ell,m}(\zeta)$. Theorem~\ref{th:section} gives
\[
(\mathcal{Q}+\mathrm{i}\mathcal{U})(\zeta)=\sum_{\ell=2}^{\infty}\sum_{m=-\ell}^{\ell}a_{\ell m}^{(2)}\,{}_2Y_{\ell,m}(\zeta),
\]
where $a_{\ell m}^{(2)}$ are centred complex-valued uncorrelated random variables with
\[
\mathsf{E}[|a_{\ell m}^{(2)}|^2]=C^+_{\ell},\qquad\sum_{\ell=0}^{\infty}(2\ell+1)C^+_{\ell}<\infty.
\]
We say that the random fields $(\mathcal{Q}\pm\mathrm{i}\mathcal{U})(\zeta)$ have spin $\pm 2$ in contrast to the fields $\Theta(\zeta)$ and $\mathcal{V}(\zeta)$ which have spin $0$.

Similar analysis for the representation $tV_2$ gives
\[
(\mathcal{Q}\pm\mathrm{i}\mathcal{U})(\zeta)=\sum_{\ell=2}^{\infty}\sum_{m=-\ell}^{\ell}a_{\ell m}^{(\pm 2)}\,{}_{\pm 2}Y_{\ell,m}(\zeta).
\]
This expansion was proved in \cite{Zaldarriaga:1996xe}. The spin $s$ spherical harmonics have the form
\[
\begin{aligned}
{}_sY_{\ell,m}(\zeta)&=\frac{(-1)^{\ell-m}}{\sqrt{4\pi}(\ell-s)!(\ell+s)!}\sqrt{(2\ell+1)(\ell+m)!(\ell-m)!}
(1+\zeta\zeta^*)\\
&\quad\times\sum_{p=\max\{0,m-s\}}^{\min\{\ell-s,\ell+s\}}\binom{\ell-s}{p}\binom{\ell+s}{p+s-m}\zeta^p
(\zeta^*)^{p+s-m}.
\end{aligned}
\]
It follows from this equation that ${}_{-s}Y_{\ell,-m}(\zeta)=(-1)^{m+s}{}_sY_{\ell,m}^*(\zeta)$. We see that
\begin{equation}\label{eq:52}
a^{(-2)*}_{\ell m}=a^{(2)}_{\ell\,-m}.
\end{equation}

Combining everything together, we obtain
\begin{equation}\label{eq:48}
\begin{aligned}
\tilde{\mathcal{P}}(\zeta)&=\frac{1}{2}\sum_{\ell=2}^{\infty}\sum_{m=-\ell}^{\ell}a^{\Theta}_{\ell m}\sigma^{(0)}Y_{\ell,m}(\zeta)+\frac{1}{2}\sum_{\ell=0}^{\infty}\sum_{m=-\ell}^{\ell}a^{\mathcal{V}}_{\ell m}\sigma^{(2)}Y_{\ell,m}(\zeta)\\
&\quad+\frac{1}{2}\sum_{\ell=2}^{\infty}\sum_{m=-\ell}^{\ell}\sigma^{(3)}\RE(a_{\ell m}^{(2)}\,{}_2Y_{\ell,m}(\zeta))+\frac{1}{2}\sum_{\ell=2}^{\infty}\sum_{m=-\ell}^{\ell}\sigma^{(1)}\IM(a_{\ell m}^{(2)}\,{}_2Y_{\ell,m}(\zeta)).
\end{aligned}
\end{equation}

\end{example}

\begin{example}[The $(\Theta,\mathcal{Q},\mathcal{U},\mathcal{V})$ model]
Introduce the \emph{circular polarisation}\index{polarisation basis!circular} or \emph{helicity basis}\index{helicity basis} $\bm{\varepsilon}^{(\pm)}=\frac{1}{\sqrt{2}}(\bm{\varepsilon}^{(1)}\pm\mathrm{i}\bm{\varepsilon}^{(2)})$ and the \emph{symmetric trace free tensor spherical harmonics}\index{spherical harmonics!symmetric trace free tensor}
\begin{equation}\label{eq:46}
\begin{aligned}
Y^E_{\ell m}(\zeta)&=-\frac{1}{\sqrt{2}}[{}_{-2}Y_{\ell,m}(\zeta)\bm{\varepsilon}^{(+)}
\otimes\bm{\varepsilon}^{(+)}+{}_2Y_{\ell,m}(\zeta)\bm{\varepsilon}^{(-)}\otimes\bm{\varepsilon}^{(-)}],\\
Y^B_{\ell,m}(\zeta)&=\frac{\mathrm{i}}{\sqrt{2}}[{}_{-2}Y_{\ell,m}(\zeta)
\bm{\varepsilon}^{(+)}\otimes\bm{\varepsilon}^{(+)}-{}_2Y_{\ell,m}(\zeta)
\bm{\varepsilon}^{(-)}\otimes\bm{\varepsilon}^{(-)}].
\end{aligned}
\end{equation}
Then we obtain
\[
\mathcal{P}(\zeta)=\frac{1}{2}\sum_{\ell=2}^{\infty}[e_{\ell m}Y^E_{\ell,m}(\zeta)+b_{\ell m}Y^B_{\ell,m}(\zeta)],
\]
where
\[
e_{\ell m}=-\frac{1}{\sqrt{2}}(a^{(2)}_{\ell m}+a^{(-2)}_{\ell m}),\qquad b_{\ell m}=\frac{\mathrm{i}}{\sqrt{2}}(a^{(2)}_{\ell m}-a^{(-2)}_{\ell m}).
\]
This expansion was proved in \cite{PhysRevD.55.7368}, but we need to prove that their definition of symmetric trace free tensor spherical harmonics is equivalent to our definition.

To perform this, we use the results of \cite{PhysRevD.86.125013}. In our notation, a part of their Equation~(92) takes the form
\[
\begin{aligned}
Y^E_{\ell,m}(\zeta)&=-\left(\frac{(\ell+1)(\ell+2)}{2(2\ell-1)(2\ell+1)}\right)^{\frac{1}{2}}
\mathsf{T}^{2\,\ell-2,\ell m}(\zeta)\\
&\quad-\left(\frac{3(\ell-1)(\ell+2)}{(2\ell-1)(2\ell+3)}\right)^{\frac{1}{2}}
\mathsf{T}^{2\,\ell,\ell m}(\zeta)\\
&\quad-\left(\frac{\ell(\ell-1}{2(2\ell+1)(2\ell+3)}\right)^{\frac{1}{2}}
\mathsf{T}^{2\,\ell+2,\ell m}(\zeta),\\
Y^B_{\ell,m}(\zeta)&=\mathrm{i}\left(\frac{\ell+2}{2\ell+1}\right)^{\frac{1}{2}}
\mathsf{T}^{2\,\ell-1,\ell m}(\zeta)+\mathrm{i}\left(\frac{\ell-2}{2\ell+1}\right)^{\frac{1}{2}}
\mathsf{T}^{2\,\ell+1,\ell m}(\zeta),
\end{aligned}
\]
where $\mathsf{T}^{2\,\ell',\ell m}(\zeta)$ are the \emph{pure-orbital tensor spherical harmonics},\index{spherical harmonics!pure-orbital tensor} see \cite{MR191580,MR569166}. Compare these equations with \cite[Equations~(2.30d), (2.30f)]{MR569166}. We obtain
\[
Y^E_{\ell,m}(\zeta)=-\mathsf{T}^{E2,\ell m}(\zeta),\qquad Y^B_{\ell,m}(\zeta)=-\mathsf{T}^{B2,\ell m}(\zeta),
\]
where $\mathsf{T}^{E2,\ell m}(\zeta)$ and $\mathsf{T}^{E2,\ell m}(\zeta)$ are the \emph{pure-spin tensor spherical harmonics},\index{spherical harmonics!pure-spin tensor} see \cite{MR569166,MR270692}. \cite[Equations~(2.38e), (2.38f)]{MR569166} become equivalent to our Equation~\eqref{eq:46}.

The complete expansion takes the form
\begin{equation}\label{eq:49}
\begin{aligned}
\tilde{\mathcal{P}}(\zeta)&=\frac{1}{2}\sum_{\ell=2}^{\infty}\sum_{m=-\ell}^{\ell}a^{\Theta}_{\ell m}\sigma^{(0)}Y_{\ell,m}(\zeta)+\frac{1}{2}\sum_{\ell=0}^{\infty}\sum_{m=-\ell}^{\ell}a^{\mathcal{V}}_{\ell m}\sigma^{(2)}Y_{\ell,m}(\zeta)\\
&\quad+\frac{1}{2}\sum_{\ell=2}^{\infty}[e_{\ell m}Y^E_{\ell,m}(\zeta)+b_{\ell m}Y^B_{\ell,m}(\zeta)].
\end{aligned}
\end{equation}

\end{example}

\begin{example}[The $(\Theta,\mathcal{E},\mathcal{B},\mathcal{V})$ model]
Is is well-known that
\[
\eth^2({}_{-2}Y_{\ell,m}(\zeta))=\overline{\eth}^2({}_2Y_{\ell,m}(\zeta))
=\sqrt{\frac{(\ell+2)!}{(\ell-2)!}}Y_{\ell,m}(\zeta).
\]
If we assume that
\[
\sum_{\ell=2}^{\infty}\frac{(2\ell+1)\mathsf{E}[|a^{(2)}_{\ell 0}|^2](\ell+2)!}{(\ell-2)!}<\infty,
\]
then we may differentiate the random fields $(\mathcal{Q}\pm\mathrm{i}\mathcal{U})(\zeta)$ term by term:
\[
\begin{aligned}
\overline{\eth}^2((\mathcal{Q}+\mathrm{i}\mathcal{U})(\zeta))&=\sum_{\ell=2}^{\infty}
\sum_{m=-\ell}^{\ell}a^{(2)}_{\ell m}\sqrt{\frac{(\ell+2)!}{(\ell-2)!}}Y_{\ell,m}(\zeta),\\
\eth^2((\mathcal{Q}-\mathrm{i}\mathcal{U})(\zeta))&=\sum_{\ell=2}^{\infty}
\sum_{m=-\ell}^{\ell}a^{(-2)}_{\ell m}\sqrt{\frac{(\ell+2)!}{(\ell-2)!}}Y_{\ell,m}(\zeta).
\end{aligned}
\]
Introduce the random fields
\[
\begin{aligned}
\mathcal{E}(\zeta)&=\frac{1}{2}[\overline{\eth}^2((\mathcal{Q}+\mathrm{i}\mathcal{U})(\zeta))
+\eth^2((\mathcal{Q}-\mathrm{i}\mathcal{U})(\zeta))],\\
\mathcal{B}(\zeta)&=-\frac{\mathrm{i}}{2}[\overline{\eth}^2((\mathcal{Q}+\mathrm{i}\mathcal{U})(\zeta))
-\eth^2((\mathcal{Q}-\mathrm{i}\mathcal{U})(\zeta))].
\end{aligned}
\]
Their spectral expansion takes the form
\[
\begin{aligned}
\mathcal{E}(\zeta)&=\sum_{\ell=2}^{\infty}\sum_{m=-\ell}^{\ell}a^{\mathcal{E}}_{\ell m}\sqrt{\frac{(\ell+2)!}{(\ell-2)!}}Y_{\ell,m}(\zeta),\\
\mathcal{B}(\zeta)&=\sum_{\ell=2}^{\infty}\sum_{m=-\ell}^{\ell}a^{\mathcal{B}}_{\ell m}\sqrt{\frac{(\ell+2)!}{(\ell-2)!}}Y_{\ell,m}(\zeta),
\end{aligned}
\]
where
\begin{equation}\label{eq:53}
a^{\mathcal{E}}_{\ell m}=\frac{1}{2}(a^{(2)}_{\ell m}+a^{(-2)}_{\ell m}),\qquad a^{\mathcal{B}}_{\ell m}=-\frac{\mathrm{i}}{2}(a^{(2)}_{\ell m}-a^{(-2)}_{\ell m}).
\end{equation}
All random fields $\Theta(\zeta)$, $\mathcal{E}(\zeta)$, $\mathcal{B}(\zeta)$, and $\mathcal{V}(\zeta)$ are spin $0$ fields. The complete expansion takes the form
\begin{equation}\label{eq:50}
\begin{aligned}
(\Theta,\mathcal{E},\mathcal{B},\mathcal{V})^{\top}(\zeta)
&=\sum_{\ell=2}^{\infty}\sum_{m=-\ell}^{\ell}a^{\Theta}_{\ell m}Y_{\ell,m}(\zeta)\mathbf{e}_{\Theta}+\sum_{\ell=2}^{\infty}\sum_{m=-\ell}^{\ell}a^{\mathcal{E}}_{\ell m}Y_{\ell,m}(\zeta)\mathbf{e}_{\mathcal{E}}\\
&\quad+\sum_{\ell=2}^{\infty}\sum_{m=-\ell}^{\ell}a^{\mathcal{B}}_{\ell m}Y_{\ell,m}(\zeta)\mathbf{e}_{\mathcal{B}}+\sum_{\ell=0}^{\infty}\sum_{m=-\ell}^{\ell}a^{\mathcal{V}}_{\ell m}Y_{\ell,m}(\zeta)\mathbf{e}_{\mathcal{V}},
\end{aligned}
\end{equation}
where $\mathbf{e}_{\Theta}=(1,0,0,0)^{\top}$, and so on.
\end{example}

\begin{example}
Regard the celestial sphere as the base space of the principal bundle $(\mathrm{SO}(3),\mathrm{SO}(2),S^2,\tilde{\pi})$ with $\tilde{\pi}(g)=gH$. Let the group $\mathrm{SO}(2)$ acts in the space $L_0$ given by Equation~\eqref{eq:47} by $h\cdot\tilde{\mathcal{P}}=h\mathcal{P}h^{-1}$. Let $(\theta,\varphi)$ be the spherical coordinates on $S^2$. The polarisation tensor of the CMB can be described as a random cross-section of a vector bundle associated to the principal bundle $(\mathrm{SO}(3),\mathrm{SO}(2),S^2,\tilde{\pi})$ by the above representation. The spectral expansions of these random sections have the form of Equations~\eqref{eq:48}, \eqref{eq:49}, and \eqref{eq:50}, where $\zeta$ is replaced with the spherical coordinates $(\theta,\varphi)$ on the sphere $S^2$.

However, in this model the electric field of the CMB is \emph{not} a spinor field, because $\mathrm{SO}(2)\neq\mathrm{Spin}(2)$. The group $\mathrm{Spin}(2)=\mathrm{U}(2)$ covers the group $\mathrm{SO}(2)$ by the ``square'' map \eqref{eq:51}. Nevertheless, the term ``spin 0 field'' is traditionally used for $\Theta(\theta,\varphi)$ and similar random fields, while the term ``spin 2 field'' is used for $\mathcal{Q}(\theta,\varphi)$ and similar random fields.

The transformation $\mathbf{n}\mapsto -\mathbf{n}$ is called the \emph{parity transformation}.\index{parity transformation} Under parity, the spherical harmonics $Y_{\ell,m}$ transform according to \cite[Equation~(A.4.39)]{durrer_2020}: $Y_{\ell,m}(-\mathbf{n})=(-1)^{\ell}Y_{\ell,m}(\mathbf{n})$. The naming convention for harmonic types is called the \emph{even-odd convention},\index{even-odd convention} see \cite[Table~II]{MR321491}. We say that the spherical harmonics $Y_{\ell,m}$ have ``electric-type'' parity.\index{parity!electric-type} It follows that the random fields $\Theta(\theta,\varphi)$ and $\mathcal{V}(\theta,\varphi)$ have electric type. Using Equations~\eqref{eq:52} and \eqref{eq:53}, we see that $a^{\mathcal{E}}_{\ell m}\mapsto(-1)^{\ell}a^{\mathcal{E}}_{\ell\,-m}$ (electric type), but $a^{\mathcal{B}}_{\ell m}\mapsto(-1)^{\ell+1}a^{\mathcal{B}}_{\ell\,-m}$. This is called ``magnetic-type'' parity.\index{parity!magnetic-type}

In Equation~\eqref{eq:50}, consider the matrix of cross-correlations $C^{ij}_{\ell}=\mathsf{E}[a^{(i)*}_{\ell m}a^{(j)}_{\ell m}]$ with $i$, $j\in\{\Theta,\mathcal{E},\mathcal{B},\mathcal{V}\}$. The current standard cosmological model assumes that the random field that generates the initial fluctuations, is invariant under parity. In this case, the cross-correlations between the components with different parity vanish. That is, $C^{\Theta\mathcal{B}}_{\ell}=C^{\mathcal{E}\mathcal{B}}_{\ell}=C^{\mathcal{B}\mathcal{V}}_{\ell}=0$.

Again, according to the standard cosmological model, in the early universe, photons and baryons are coupled by the physical process called \emph{Thomson scattering}.\index{Thomson scattering} This process does introduce linear polarisation, but no circular one. Only a small amount of circular polarisation may be introduced by other physical processes, see \cite{MR3985285}. By this reason, we may put $a^{\mathcal{V}}_{\ell m}=0$. The expansion takes the form
\[
\begin{aligned}
(\Theta,\mathcal{E},\mathcal{B})^{\top}(\theta,\varphi)
&=\sum_{\ell=2}^{\infty}\sum_{m=-\ell}^{\ell}a^{\Theta}_{\ell m}Y_{\ell,m}(\theta,\varphi)\mathbf{e}_{\Theta}
+\sum_{\ell=2}^{\infty}\sum_{m=-\ell}^{\ell}a^{\mathcal{E}}_{\ell m}Y_{\ell,m}(\theta,\varphi)\mathbf{e}_{\mathcal{E}}\\
&\quad+\sum_{\ell=2}^{\infty}\sum_{m=-\ell}^{\ell}a^{\mathcal{B}}_{\ell m}Y_{\ell,m}(\theta,\varphi)\mathbf{e}_{\mathcal{B}}
\end{aligned}
\]
with
\[
\mathsf{E}[(a^{\Theta*}_{\ell m},a^{\mathcal{E}*}_{\ell m},a^{\mathcal{B}*}_{\ell m})^{\top}(a^{\Theta}_{\ell' m'},a^{\mathcal{E}}_{\ell'm'},a^{\mathcal{B}}_{\ell'm'})]
=\delta_{\ell\ell'}\delta_{mm'}\left(
\begin{smallmatrix}
  C^{\Theta\Theta}_{\ell} & C^{\Theta\mathcal{E}}_{\ell} & 0 \\
  C^{\mathcal{E}\Theta}_{\ell} & C^{\mathcal{E}\mathcal{E}}_{\ell} & 0 \\
  0 & 0 & C^{\mathcal{B}\mathcal{B}}_{\ell}
\end{smallmatrix}
\right).
\]

Finally, the standard cosmological model predicts arising primordial $\mathcal{B}$-po\-la\-ri\-sa\-tion due to a background of gravitational waves created either just after the Big Bang during a physical process called \emph{inflation}\index{inflation} or between the end of inflation and Big Bang nucleosynthesis. Currently, this mode is not detected, the best current upper estimates for its amplitude may be found in \cite{MR4233941}.
\end{example}

\begin{example}\label{ex:CMBreal}

Recall that the spinor representation $V_1\oplus tV_1$ of the group $H=\mathrm{U}(1)$ has a real structure $j(z_1,z_2)=(z^*_2,z^*_1)$, see Example~\ref{ex:spinweight}. Consider the corresponding real spinor representation $rV_1$: $\mathrm{e}^{\mathrm{i}\theta}\mapsto\left(
\begin{smallmatrix}
  \cos\theta & -\sin\theta \\
  \sin\theta & \cos\theta
\end{smallmatrix}
\right)$. Its tensor square has the form $rV_1\otimes rV_1=2U_0\oplus rV_2$, which easily follows from the corresponding relation for characters: $4\cos^2\theta=2+2\cos(2\theta)$.

Introduce the matrices $\sigma^{(i)}_{\mathbb{R}}$ by $\sigma^{(i)}_{\mathbb{R}}=\sigma^{(i)}$ for $i=0$, $1$, $3$, and $\sigma^{(2)}_{\mathbb{R}}=-\mathrm{i}\sigma^{(2)}$. The first (resp. second) copy of $U_0$ acts in the one-dimensional real space generated by  $\sigma^{(i)}_{\mathbb{R}}$ (resp.  $\sigma^{(2)}_{\mathbb{R}}$). The representation $rV_2$ acts in the two-dimensional space generated by  $\sigma^{(1)}_{\mathbb{R}}$ and  $\sigma^{(3)}_{\mathbb{R}}$.

On the other hand, the real irreducible representations of the group $G=\mathrm{SU}(2)$ are $\{\,U_{\ell}\colon\ell=0,1,\dots\,\}$\index{representation!irreducible real of $\mathrm{SU}(2)$!$U_{\ell}$} and $\{\,rc'W_{\ell}\colon\ell=\frac{1}{2},\frac{3}{2},\dots\,\}$.\index{representation!irreducible real of $\mathrm{SU}(2)$!$rc'W_{\ell}$} The character of $U_{\ell}$ is equal to that of $cU_{\ell}$, while the character of $rV_m$, $m>0$, is
\[
\chi_{rV_m}\left(\left(
\begin{smallmatrix}
  \alpha & 0 \\
  0 & \alpha^*
\end{smallmatrix}
\right)\right)=\chi_{crV_m}\left(\left(
\begin{smallmatrix}
  \alpha & 0 \\
  0 & \alpha^*
\end{smallmatrix}
\right)\right)=\chi_{V_m\oplus tV_m}\left(\left(
\begin{smallmatrix}
  \alpha & 0 \\
  0 & \alpha^*
\end{smallmatrix}
\right)\right)=\alpha^{2m}+\alpha^{-2m},
\]
and $\chi_{U_0}\left(\left(
\begin{smallmatrix}
  \alpha & 0 \\
  0 & \alpha^*
\end{smallmatrix}
\right)\right)=1$, where we used the relation $cr=1+t$ (\cite[Proposition~3.6]{MR0252560}). It follows that
\[
\res^G_HU_{\ell}=U_0\oplus rV_1\oplus\cdots\oplus rV_{\ell}.
\]
For the Hilbert space $L^2(E_0)$ of square-integrable section of the line bundle, where the real representation of $\mathrm{SU}(2)$ induced by the representation $U_0$ of the subgroup $\mathrm{U}(1)$ acts, the Frobenius reciprocity gives that $L^2(E_0)$ contains
\[
\frac{\dim_{\mathbb{R}}\Hom_{\mathbb{R}H}(\res^G_HU_{\ell},U_0)}
{\dim_{\mathbb{R}}\Hom_{\mathbb{R}G}(U_{\ell},U_{\ell})}=\frac{1}{1}=1
\]
copy of the representation $U_{\ell}$ for $\ell\geq 0$. The Hilbert space $L^2(E_2)$ contains
\[
\frac{\dim_{\mathbb{R}}\Hom_{\mathbb{R}H}(\res^G_HU_{\ell},rV_2)}
{\dim_{\mathbb{R}}\Hom_{\mathbb{R}G}(U_{\ell},U_{\ell})}=\frac{2}{1}=2
\]
copies of the representation $U_{\ell}$ for $\ell\geq 2$.

Similarly, the character of $rc'W_{\ell}$ is $\chi_{rc'W_{\ell}}=\chi_{crc'W_{\ell}}=2\chi_{c'W_{\ell}}$, and we obtain
\[
\res^G_Hrc'W_{\ell}=2rV_1\oplus 2rV_3\oplus\cdots\oplus 2rV_{2\ell}.
\]
Neither $L^2(E_0)$ nor $L^2(E_2)$ contain copies of the representations $rc'W_{\ell}$.

Instead of constructing the bases of the above real Hilbert spaces from scratch, we use the existing bases of the corresponding complex spaces. Indeed, the basis in $cL^2(E_0)$ is given by the complex-valued spherical harmonics $Y_{\ell,m}(\zeta)$. The complex conjugation is a real structure in $cL^2(E_0)$. Therefore, the real-valued spherical harmonics\index{spherical harmonics!real-valued}
\[
Y^m_{\ell}(\zeta)=
\begin{cases}
  \frac{1}{\sqrt{2}}[Y_{\ell,m}(\zeta)+(-1)^mY_{\ell,-m}(\zeta)], & \mbox{if } m>0, \\
  Y_{\ell,0}(\zeta), & \mbox{if } m=0, \\
  \frac{1}{\sqrt{2}\mathrm{i}}[Y_{\ell,m}(\zeta)-(-1)^mY_{\ell,-m}(\zeta)], & \mbox{if } m<0
\end{cases}
\]
form the basis in $L^2(E_0)$. Likewise, the basis in $cL^2(E_2)\oplus cL^2(E_{-2})$ is given by the $\mathbb{C}^2$-valued \emph{spin 2 real-valued spherical harmonics}\index{spherical harmonics!spin 2!real-valued}
\[
\left(
\begin{smallmatrix}
  {}_2Y_{\ell,m}(\zeta) \\
  0
\end{smallmatrix}
\right),\quad\left(
\begin{smallmatrix}
  0 \\
  {}_{-2}Y_{\ell,m}(\zeta)
\end{smallmatrix}
\right),\qquad \ell=2, 3,\dots,\quad -\ell\leq m\leq\ell.
\]
The real structure of Example~\ref{ex:spinweight} maps $\left(
\begin{smallmatrix}
  {}_2Y_{\ell,m}(\zeta) \\
  0
\end{smallmatrix}
\right)$ to $\left(
\begin{smallmatrix}
  0 \\
  (-1)^m{}_{-2}Y_{\ell,-m}(\zeta)
\end{smallmatrix}
\right)$ and $\left(
\begin{smallmatrix}
  0 \\
  {}_{-2}Y_{\ell,m}(\zeta)
\end{smallmatrix}
\right)$ to $\left(
\begin{smallmatrix}
  (-1)^m{}_2Y_{\ell,m}(\zeta) \\
  0
\end{smallmatrix}
\right)$. Therefore, the $\mathbb{R}^2$-valued spin-weighted spherical harmonics take the form
\[
\begin{aligned}
{}_2\mathbf{Y}^m_{\ell}(\zeta)&=\left(
\begin{smallmatrix}
  {}_2Y_{\ell,m}(\zeta)+(-1)^m{}_{-2}Y_{\ell,-m}(\zeta) \\
  -\mathrm{i}\,{}_2Y_{\ell,m}(\zeta)+\mathrm{i}(-1)^m\,{}_{-2}Y_{\ell,-m}(\zeta)
\end{smallmatrix}
\right),\\
{}_{-2}\mathbf{Y}^m_{\ell}(\zeta)&=\left(
\begin{smallmatrix}
  \mathrm{i}\,{}_2Y_{\ell,m}(\zeta)-\mathrm{i}(-1)^m\,{}_{-2}Y_{\ell,-m}(\zeta) \\
  {}_2Y_{\ell,m}(\zeta)+(-1)^m{}_{-2}Y_{\ell,-m}(\zeta)
\end{smallmatrix}
\right).
\end{aligned}
\]

It is easy to check that the real-valued spherical harmonics have ``electric-type'' parity. We obtain
\[
\begin{aligned}
\Theta(\zeta)&=\sum_{\ell=2}^{\infty}\sum_{m=-\ell}^{\ell}a^{\Theta}_{\ell m}Y^m_{\ell}(\zeta),\\
\left(\begin{smallmatrix}
        \mathcal{Q} \\
        \mathcal{U}
      \end{smallmatrix}
\right)(\zeta)&=\sum_{\ell=2}^{\infty}\sum_{m=-\ell}^{\ell}\left({}_2a^{\mathcal{Q},\mathcal{U}}_{\ell m}{}_2\mathbf{Y}^m_{\ell}(\zeta)+{}_{-2}a^{\mathcal{Q},\mathcal{U}}_{\ell m}{}_{-2}\mathbf{Y}^m_{\ell}(\zeta)\right),\\
\mathcal{V}(\zeta)&=\sum_{\ell=2}^{\infty}\sum_{m=-\ell}^{\ell}a^{\mathcal{V}}_{\ell m}Y^m_{\ell}(\zeta),
\end{aligned}
\]
where the coefficients are centred random variables with cross-correlations given by
\[
\mathsf{E}[(a^{\Theta}_{\ell m},{}_2a^{\mathcal{Q},\mathcal{U}}_{\ell m},{}_{-2}a^{\mathcal{Q},\mathcal{U}}_{\ell m},a^{\mathcal{V}}_{\ell m})(a^{\Theta}_{\ell'm'},{}_2a^{\mathcal{Q},\mathcal{U}}_{\ell' m'},{}_{-2}a^{\mathcal{Q},\mathcal{U}}_{\ell'm'},a^{\mathcal{V}}_{\ell' m'})^{\top}]=\delta_{\ell\ell'}\delta_{mm'}C_{\ell},
\]
and the symmetric positive-definite matrices $C_{\ell}$ satisfy
\[
\sum_{\ell=2}^{\infty}(2\ell+1)\tr C_{\ell}<\infty.
\]
\end{example}

\begin{example}
Observe that the spinor representation $V_1\oplus tV_1$ of the group $H=\mathrm{U}(1)$ has a quaternionic structure $j(z_1,z_2)=(-z^*_2,z^*_1)$. The corresponding quaternionic irreducible representation $qV_1$ of the group $\mathrm{U}(1)$ acts in $\mathbb{H}^1$ by $\mathrm{e}^{\mathrm{i}\theta}\mapsto\mathrm{e}^{\mathrm{i}\theta}$. By \cite[Definition~3.31]{MR0252560}, its character is the character of $c'qV_1=V_1\oplus tV_1$. Again by \cite{MR0252560}, the tensor square of $c'qV_1$ is a real representation, and its character is the same as that of the representation $2U_0\oplus rV_2$ of Example~\ref{ex:CMBreal}. No new expansions appears here.
\end{example}

\section{Conclusions}

The theory of random sections of various fibre bundles is now a mature part of Probability with various interesting links to Special functions, Invariant Theory, Group Representations, Convex Analysis, etc. The case of trivial tensor bundles over Euclidean spaces found interesting and important applications in Continuum Physics, while that of nontrivial spinor bundles over the two-dimensional sphere $S^2$ in Cosmology.

This survey reflects the scientific interests of the authors and is concentrated on spectral expansions of random functions in question and their applications to Physics. On the one hand, many research questions are missing; for example, tensor-valued random fields as solutions of boundary value problems for stochastic partial differential equations similar to Equation~\eqref{stoch-eq}, statistics of random sections of spinor bundles, properties of sample paths of random sections, and many others. On the other hand, we formulated several unsolved research questions above.

The authors hope that this survey can be useful both for mathematicians who are working in different physical applications and for applied specialists who need to use advanced mathematical tools.

\section{Acknowledgements}

We are grateful to Victor Abramov for useful discussion on principal bundles.

\appendix

\section{The mathematical language of the theory}\label{sec:language}

We give a short description of mathematical terms used in the paper. For a more detailed account, see \cite[Appendix~A]{MR2977490}, \cite[Chapter~2]{MR3930601}, and the literature cited there.

\subsection{Tensors}

The symbol $\mathbb{R}$ (resp. $\mathbb{C}$, resp. $\mathbb{H}$) denotes the field of real numbers (resp. the field of complex numbers, resp. the skew field of quaternions). The symbol $U$ (resp. $V$, resp. $W$) denotes a finite-dimensional linear space over $\mathbb{R}$ (resp. over $\mathbb{C}$, resp. a right finite-dimensional linear space over $\mathbb{H}$). The round brackets $(\cdot,\cdot)$ denote an inner product in either $U$, $V$, or $W$. It is $\mathbb{K}$-linear in the second argument, has the property $\langle \mathbf{x}_1,\mathbf{x}_2\rangle=\langle \mathbf{x}_2,\mathbf{x}_1\rangle^*$ for all $\mathbf{x}_1$, $\mathbf{x}_2\in U$ (resp. in $V$, resp. in $W$), and is positive-definite: $\langle\mathbf{x},\mathbf{x}\rangle\geq 0$ with equality only if $\mathbf{x}=\mathbf{0}$, where a $\mathbb{R}$-linear map $\mathbb{K}\to\mathbb{K}$, $x\mapsto x^*$ is given by $a^*=a$ for $a\in\mathbb{R}$, $(a+b\mathrm{i})^*=a-b\mathrm{i}$ for $(a+b\mathrm{i})\in\mathbb{C}$, and
\[
(a+b\mathrm{i}+c\mathrm{j}+d\mathrm{k})=a-b\mathrm{i}-c\mathrm{j}-d\mathrm{k}
\]
for $(a+b\mathrm{i}+c\mathrm{j}+d\mathrm{k})\in\mathbb{H}$.
By choosing a basis $\{\,\mathbf{e}^i\colon 1\leq i\leq d\,\}$ which is orthonormal with respect to the introduced inner product, the linear space $U$ (resp. $V$, resp. $W$) can be identified with the coordinate space $\mathbb{R}^d$ (resp. $\mathbb{C}^d$, resp. $\mathbb{H}^d$).

The symbol $U^*$ denotes the set of all linear forms on $U$. A real \emph{rank $r$ tensor}\index{tensor} is an element of a tensor product of $r\geq 0$ finite-dimensional real linear spaces. By definition, the \emph{tensor product}\index{tensor product!of real linear spaces} of an empty family of real linear spaces ($r=0$) is the coordinate space $\mathbb{R}^1$. The tensor product of the family $\{U_1,\dots,U_r\}$ with $r\geq 1$ is the set of all $r$-linear forms defined on the Cartesian product $U^*_1\times\cdots\times U^*_r$. The inner product on each space $U_i$, $1\leq i\leq r$, enables to identify $U_i$ with $U^*_i$ with the help of the map $U_i\to U^*_i$, $\mathbf{x}\mapsto(\mathbf{x},\cdot)$ and do not distinguish between covariant and contravariant tensors. The tensor product\index{tensor product!of vectors} of the vectors $\mathbf{x}_1\in U_1$, \dots, $\mathbf{x}_r\in U_r$ is the $r$-linear form
\[
\mathbf{x}_1\otimes\cdots\otimes\mathbf{x}_r(\mathbf{y}_1,\dots,\mathbf{y}_r)
=(\mathbf{y}_1,\mathbf{x}_1)\cdots(\mathbf{y}_r,\mathbf{x}_r).
\]

Similarly, the inner product on each \emph{complex} linear space $V_i$, $1\leq i\leq r$, enables to identify $V_i$ with $V^*_i$. Moreover, let the space $tV$ has the same underlying set as $V$, but let $z\in\mathbb{C}$ acts on $tV$ as $z^*$ used to act on $V$. We can identify $V^*$ and $tV$ with the help of the map $tV\to V^*$, $\mathbf{x}\mapsto(\mathbf{x},\cdot)$ and do not distinguish between four types of complex tensors, described, e.g., in \cite[Section~13.1]{MR757180}.

The inner product on each \emph{quaternionic} linear space $W_i$, $1\leq i\leq r$, enables to identify $W_i$ with $W^*_i$. However, assume that $W_1$ and $W_2$ are two right quaternionic linear spaces, and $q_1$, $q_2$ are two quaternions. An $\mathbb{H}$-bilinear form $\mu\colon W_1\times W_2\to\mathbb{H}^1$ must satisfy $\mu(\mathbf{x}_1q_1,\mathbf{x}_2q_2)=\mu(\mathbf{x}_1,\mathbf{x}_2q_2)q_1=\mu(\mathbf{x}_1,\mathbf{x}_2)q_1q_2$ and $\mu(\mathbf{x}_1q_1,\mathbf{x}_2q_2)=\mu(\mathbf{x}_1q_1,\mathbf{x}_2)q_2=\mu(\mathbf{x}_1,\mathbf{x}_2)q_2q_1$. In general, $q_1q_2\neq q_2q_1$, and this does not work. On the other hand, a bilinear form is well-defined if $W_2$ is a \emph{left} quaternionic space. The set of such forms is \emph{not} a right quaternionic linear space. Indeed, the form $(\mathbf{x}_1,\mathbf{x}_2)\mapsto\mu(\mathbf{x}_1,\mathbf{x}_2)q$ is not bilinear: $\mu(\mathbf{x}_1q_1,\mathbf{x}_2)q$ must be equal to $\mu(\mathbf{x}_1,\mathbf{x}_2)qq_1$, but it is equal to $\mu(\mathbf{x}_1,\mathbf{x}_2)q_1q$. By similar reasons, it is not a left quaternionic linear space. However, the set $W_1\otimes W_2$ is a real linear space.

By this reasons, we put $\mathbb{R}'=\mathbb{H}'=\mathbb{R}$ and $\mathbb{C}'=\mathbb{C}$. In this notation, the tensor product of two $\mathbb{K}$-linear spaces is a $\mathbb{K}'$-linear space.

\begin{example}[The Levi--Civita permutation tensor in three dimensions]\label{ex:Levi-Civita}
Let $U_1$, $U_2$, and $U_3$ be three copies of a real $3$-dimensional linear space with inner product. define the \emph{Levi--Civita permutation tensor}\index{tensor!Levi--Civita} in a coordinate-free form by
\[
\varepsilon(\mathbf{x}_1,\mathbf{x}_2,\mathbf{x}_3)=(\mathbf{x}_1,\mathbf{x}_2\times\mathbf{x}_3),
\]
that is, the scalar triple product of vectors in a right-handed coordinate system.

In coordinates, let $\{\mathbf{e}^i\colon 1\leq i\leq 3\}$ be the standard basis of the space $\mathbb{R}^3$. Denote
\[
\varepsilon_{ijk}=\varepsilon(\mathbf{e}^i,\mathbf{e}^j,\mathbf{e}^k).
\]
It is easy to check that the introduced symbol is \emph{totally antisymmetric}. That is, when any two indices are interchanged, the symbol is negated, if any two indices are equal, the symbol is zero, and when all indices are unequal, the symbol $\varepsilon_{ijk}$ is equal to the signature of the permutation $(i,j,k)$.
\end{example}

\subsection{Group representations}

A \emph{left} action\index{group action!left} of a group $G$ with identity element $e$ on a non-empty set $L$ is such a map $G\times L \to L$, $(g,x)\mapsto g\cdot x$, that $e\cdot x=x$ and $g\cdot(h\cdot x)=(gh)\cdot x$ for all $g$, $h\in G$ and for all $x\in L$. Similarly, a \emph{right} action\index{group action!right} of a group $G$ with identity element $e$ on a non-empty set $L$ is such a map $L\times G \to L$, $(x,g)\mapsto x\cdot g$, that $x\cdot e=x$ and $(x\cdot h)\cdot g=x\cdot(hg)$ for all $g$, $h\in G$ and for all $x\in L$.

As an example of a left action, think about the matrix-vector multiplication. A more interesting example is as follows. Consider a real finite-dimensional space $U$ as a group with respect to the addition. Assume that the group $U$ acts on a set $E$ \emph{transitively},\index{group action!transitive} that is, for each pair $A$, $B\in E$ there exists a $x\in U$ such that $x\cdot A=B$, and \emph{faithfully},\index{group action!faithful} that is, for any $x\in U\setminus\{0\}$ there is an $A\in E$ with $x\cdot A\neq A$. We change notation and denote the result of the action of a vector $x\in U$ on a point $A\in E$ by $A+x$. Denote by $B-A$ such a vector in $U$ that $A+(B-A)=B$. The inner product on $V$ enables to define the distance in $E$:
\[
\rho(A,B)=\sqrt{(A-B,A-B)}.
\]
An affine space with this distance is called a \emph{Euclidean space}.\index{Euclidean space} If the dimension of $U$ is equal to $d$, then we denote the pair $(E,\rho)$ by $E^d$. According to \cite{MR1345386}:
\begin{quote}
Formerly, the universe was provided not with an affine, but with a linear structure (the geocentric system of the universe).
\end{quote}

We omit the dot in the notation $g\cdot x$ in the case when $G$ is a topological group, $L$ is a Hilbert space over $\mathbb{K}\in\{\mathbb{R},\mathbb{C},\mathbb{H}\}$, $g\cdot x$ is a $\mathbb{K}$-linear function of $x$ and a continuous function of $g$ and $x$ (in the strong operator topology, when $X$ is infinite-dimensional). Such an action is called a \emph{representation}\index{representation} of $G$. Recall that a topological group is a set $G$ which is a group and a topological space such that the map
\begin{equation}\label{eq:1}
G\times G\to G,\qquad (g,h)\mapsto g^{-1}h
\end{equation}
is continuous.

The \emph{translation}\index{translation} $\theta^L(g)\colon x\to gx$ is a bounded linear operator in $L$. For two Hilbert spaces $L_1$ and $L_2$ over the same (skew) field $\mathbb{K}$, a bounded linear operator $f\colon X_1\to X_2$ is called \emph{intertwining}\index{intertwining operator} if $f(gx)=g(fx)$, $g\in G$, $x\in L_1$. Two representations are equivalent if the $\mathbb{K}'$-linear space $\Hom_G(L_1,L_2)$ of intertwining operators contains an invertible operator with bounded inverse.

If $G$ is a \emph{compact} topological group, that we can and will give $L$ an inner product which is invariant under $G$. We then speak of an \emph{orthogonal}\index{representation!orthogonal} (resp. \emph{unitary},\index{representation!unitary} resp. \emph{symplectic})\index{representation!symplectic} representation in the case of $\mathbb{K}=\mathbb{R}$ (resp. $\mathbb{K}=\mathbb{C}$, resp. $\mathbb{K}=\mathbb{H}$). In what follows the symbol $G$ always denotes a compact group.

A non-zero linear space $L$ is \emph{reducible}\index{representation!reducible} if some proper non-zero closed subspace $M$ of $X$ is invariant: $gy\in M$ for all $y\in M$, otherwise $L$ is \emph{irreducible}.\index{representation!irreducible} For a compact group, every representation $L$ is a Hilbert direct sum of irreducible components. Moreover, let $L_i$ runs over the inequivalent irreducible representations, as $i$ runs over some set $I$. For a finite-dimensional space $L$, there exists a unique set $\{\,m_i\colon i\in I\,\}$ of nonnegative integers such that all of them except finitely many are zeroes and the space $L$ is uniquely decomposed into the Hilbert direct sum of \emph{isotypical subspaces}\index{isotypic subspace} in which the direct sum of $m_i>0$ copies of $L_i$ acts.

Let $U_1$ and $U_2$ be two real finite-dimensional representations of a topological group $G$ with the translations $\theta^{U_1}$ and $\theta^{U_2}$. A map $f\colon U_2\to U_1$ is called \emph{form-invariant}\index{form-invariant map} for the pair $(U_1,U_2)$ if
\[
f(\theta^{U_2}(g)x)=\theta^{U_1}(g)f(x),\qquad g\in G,\quad x\in U_2.
\]
If $U_1=\mathbb{R}^1$ and $\theta^{U_1}(g)=1$, then the function $f\colon X_2\to\mathbb{R}^1$ is called just invariant of the representation $U_2$, hence the name \emph{Invariant Theory}.

We formulate an important result of Invariant Theory. Let $U$, $U_1$, \dots, $U_N$ be finitely many finite-dimensional orthogonal representations of a closed subgroup $G$ of the group $\mathrm{O}(3)$ of $3\times 3$ orthogonal matrices. It turns out that there exists finitely many, say $K$, polynomials $\{\,I_k(x_1,\dots,x_N)\colon 1\leq k\leq K\,\}$ with $x_n\in U_n$ for $1\leq n\leq N$ that constitute an \emph{integrity basis}\index{integrity basis} for polynomial invariants of the representation $U_1\oplus\cdots\oplus U_N$. That is, every polynomial invariant of the above representation is a polynomial in $I_1$, \dots, $I_K$. Similarly, there exists an integrity basis $\{\,T_l\colon 1\leq l\leq L\,\}$ for form-invariant polynomials of the pair $(U,U_1\oplus\cdots\oplus U_N)$. Following \cite{MR171421}, call $T_l$ the \emph{basis form-invariant tensors}.\index{tensor!basis form-invariant}

\begin{theorem}[Wineman--Pipkin, \cite{MR171421}]\label{th:WP}
A function $T\colon U_1\oplus\cdots\oplus U_N\to U$ is a measurable form-invariant map of the pair $(U,U_1\oplus\cdots\oplus U_N)$ if and only if it has the form
\[
T(x_1,\dots,x_N)=\sum_{l=1}^{L}\varphi_l(I_1,\dots,I_K)T_l(x_1,\dots,x_N),
\]
where $\varphi_l$ are real-valued measurable functions.
\end{theorem}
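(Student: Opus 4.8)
The plan is to prove the two implications separately, with essentially all of the work in necessity. Write $X=U_1\oplus\cdots\oplus U_N$ and recall that form-invariance of $T\colon X\to U$ means $T(\theta^X(g)x)=\theta^U(g)T(x)$ for all $g\in G$, $x\in X$. Sufficiency is a direct check: if $T=\sum_l\varphi_l(I_1,\dots,I_K)T_l$, then since each $I_k$ is invariant we have $\varphi_l(I_1(\theta^X(g)x),\dots,I_K(\theta^X(g)x))=\varphi_l(I_1(x),\dots,I_K(x))$, and since each $T_l$ is form-invariant and $\theta^U(g)$ is linear, the whole sum transforms by $\theta^U(g)$; measurability is immediate because the $I_k$ and $T_l$ are polynomials and the $\varphi_l$ are measurable. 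For necessity, fix $x\in X$ and let $G_x=\{g\in G\colon \theta^X(g)x=x\}$ be its stabiliser. Applying form-invariance to $g\in G_x$ gives $T(x)=\theta^U(g)T(x)$, so $T(x)$ lies in the fixed-point subspace $U^{G_x}=\{v\in U\colon \theta^U(g)v=v,\ g\in G_x\}$.

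The crux — and what I expect to be the main obstacle — is the span lemma: for every $x\in X$,
\[
\operatorname{span}_{\mathbb{R}}\{T_1(x),\dots,T_L(x)\}=U^{G_x}.
\]
To prove it I would first show that the evaluation values $\{P(x)\colon P\ \text{a form-invariant polynomial map}\ X\to U\}$ exhaust $U^{G_x}$. The inclusion into $U^{G_x}$ is again form-invariance. For the reverse, given $v\in U^{G_x}$ I would define a continuous equivariant map on the compact orbit $Gx$ by $gx\mapsto\theta^U(g)v$ — well defined because $v$ is $G_x$-fixed — extend it continuously to all of $X$ by the Tietze theorem, and average over the Haar measure of the compact group $G$ to obtain a continuous equivariant $S$ with $S(x)=v$. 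Approximating $S$ uniformly on a ball by a polynomial map and averaging again over $G$ produces form-invariant polynomial maps whose values at $x$ converge to $v$; since the achievable values form a linear subspace of the finite-dimensional space $U^{G_x}$ and this subspace is dense, it is all of $U^{G_x}$. Finally, because the $T_l$ form an integrity basis for the form-invariant polynomial maps (every such map is an invariant-polynomial combination of the $T_l$), every $P(x)$ is an $\mathbb{R}$-combination of the $T_l(x)$, whence $U^{G_x}=\operatorname{span}\{T_l(x)\}$.

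With the span lemma in hand I would construct the coefficient functions by a Gram-matrix device that simultaneously secures measurability and dependence on the invariants alone. Set $M_{lm}(x)=(T_l(x),T_m(x))_U$ and $b_l(x)=(T_l(x),T(x))_U$. Because $\theta^U(g)$ is orthogonal and both the $T_l$ and $T$ are form-invariant, each $M_{lm}$ and each $b_l$ is a $G$-invariant function; the $M_{lm}$ are polynomial invariants, hence polynomials in $I_1,\dots,I_K$, while the $b_l$ are measurable invariants. Since $T(x)\in\operatorname{span}\{T_l(x)\}$, writing $T(x)=\sum_m a_mT_m(x)$ gives $b(x)=M(x)a\in\operatorname{range}M(x)$, so $c(x):=M(x)^{+}b(x)$, the Moore--Penrose pseudoinverse applied to $b(x)$, satisfies $M(x)c(x)=b(x)$. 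Then $e(x):=\sum_l c_l(x)T_l(x)-T(x)$ lies in $\operatorname{span}\{T_l(x)\}$ yet is orthogonal to every $T_m(x)$, forcing $e(x)=0$, i.e. $T(x)=\sum_l c_l(x)T_l(x)$.

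It remains to see that each $c_l$ depends only on the invariants. Indeed $M(x)$ and $b(x)$ depend on $x$ solely through $(I_1(x),\dots,I_K(x))$, so the same holds for $c_l(x)=(M(x)^{+}b(x))_l$; since for a compact group the invariant map $x\mapsto(I_1(x),\dots,I_K(x))$ separates orbits and induces a homeomorphism of the orbit space onto its image, the invariant Borel function $c_l$ factors as $c_l=\varphi_l(I_1,\dots,I_K)$ with $\varphi_l$ Borel (the pseudoinverse being a Borel function of its matrix argument). This yields $T=\sum_l\varphi_l(I_1,\dots,I_K)T_l$ and closes the argument. The genuinely substantial step is the span lemma, where compactness of $G$, Tietze extension, group averaging, and the density-of-a-subspace argument together convert the pointwise constraint $T(x)\in U^{G_x}$ into solvability within the $T_l$; the pseudoinverse construction is then the clean mechanism that upgrades pointwise solvability to a global measurable representation whose coefficients are functions of the invariants.
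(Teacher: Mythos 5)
The paper contains no proof of Theorem~\ref{th:WP} to compare against: the statement is quoted from Wineman and Pipkin \cite{MR171421} as a known result, so your attempt can only be measured against the original argument. Judged on its merits, your proof is correct, and it shares the two-stage skeleton of \cite{MR171421} --- first the pointwise statement that $T(x)$ lies in $\operatorname{span}\{T_1(x),\dots,T_L(x)\}$, then a measurable choice of coefficients depending only on the invariants --- while implementing both stages differently. Your span lemma (Tietze extension of the equivariant map $gx\mapsto\theta^U(g)v$ off the compact orbit, Haar averaging, Stone--Weierstrass approximation, a second averaging, and closedness of the finite-dimensional subspace of achievable values) is a clean way to show that form-invariant \emph{polynomial} maps already realise every vector of $U^{G_x}$, whence $U^{G_x}=\operatorname{span}\{T_l(x)\}$ by the integrity-basis property; note that the compactness of $G$ and the orthogonality of the representations (so that orbits stay in a sphere and the saturation of a ball is the ball) are used exactly where you use them. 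For the selection stage, the classical route handles measurability by partitioning the domain into the measurable sets on which a fixed maximal subfamily of $\{T_l(x)\}$ is linearly independent (cut out by vanishing and nonvanishing of Gram minors) and applying Cramer's rule piecewise; your single formula $c(x)=M(x)^{+}b(x)$ compresses that case analysis into one Borel expression, Borel because $A^{+}=\lim_{\delta\downarrow 0}A^{\top}\bigl(AA^{\top}+\delta I\bigr)^{-1}$ is a pointwise limit of continuous maps. That is a genuine simplification, and the orthogonality argument forcing $e(x)=0$ is airtight.

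Two steps you assert rather than prove deserve explicit references, though neither is a gap. First, that the polynomial invariants of a compact group separate orbits, and that $x\mapsto(I_1(x),\dots,I_K(x))$ induces a homeomorphism of the orbit space of $X=U_1\oplus\cdots\oplus U_N$ onto a \emph{closed} subset of $\mathbb{R}^K$: separation is classical, and properness (hence closedness of the image) follows because $\|x\|^2$, being an invariant polynomial, is itself a polynomial in $I_1,\dots,I_K$, so boundedness of the invariants bounds $\|x\|$ --- you should say this, since without properness a continuous injection from the orbit space need not be a homeomorphism onto its image. Second, factoring the invariant Borel function $c_l$ through this embedding as a Borel $\varphi_l$ is not automatic from the homeomorphism alone; you need a Borel cross-section $s$ of the orbit map (Federer--Morse, or any measurable-selection theorem for continuous surjections with compact fibres between Polish spaces), after which $\varphi_l=c_l\circ s$ on the image, extended by zero off it, completes the argument.
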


\subsection{The Adams construction}

In this subsection, we introduce a convenient notation system for irreducible representations of compact topological groups due to Adams \cite{MR0252560}.

Let $V$ be a finite-dimensional complex representation of a compact topological group $G$. Let the representation $tV$ has the same underlying set as $V$ and the same action of $G$, but let $z\in\mathbb{C}$ acts on $tV$ as $z^*$ used to act on $V$. For example, let $G=\mathrm{U}(1)$, the set of all complex numbers $z$ with $|z|=1$, and let an element $g=\mathrm{e}^{\mathrm{i}\varphi}\in\mathrm{U}(1)$ acts on $V_n=\mathbb{C}^1$\index{representation!irreducible complex of $\mathrm{U}(1)$!$V_n$} by $g\cdot z=\mathrm{e}^{\mathrm{i}n\varphi}z$ for $n\geq 1$ and $z\in\mathbb{C}^1$. We have $tV_n=\mathbb{C}^1$\index{representation!irreducible complex of $\mathrm{U}(1)$!$tV_n$} with the action $g\cdot z=\mathrm{e}^{-\mathrm{i}n\varphi}z$, and $tV_n$ is not equivalent to $V_n$. An irreducible complex representation with this property is called a \emph{complex representation of complex type}.\index{representation!complex!of complex type}

Let the representation $rV_n$\index{representation!irreducible real of $\mathrm{U}(1)$!$rV_n$} has the same underlying set as $V_n$ and the same action of $G$, but regard it as a $\mathbb{R}$-linear space. The matrix of the representation $rV_n$ is $\left(
\begin{smallmatrix}
  \cos(n\varphi) & -\sin(n\varphi) \\
  \sin(n\varphi) & \cos(n\varphi)
\end{smallmatrix}
\right)$. An irreducible real representation of the form $rV$, where $V$ is a complex representation of complex type, is called a \emph{real representation of complex type}.\index{representation!real!of complex type} The representation $rtV$ is equivalent to $rV$ by \cite[Proposition~3.6]{MR0252560}. In particular, when $n=1$, we conclude, that the group $\mathrm{U}(1)$ is isomorphic to the group $\mathrm{SO}(2)$ of orthogonal $2\times 2$ matrices with unit determinant.

Define $qV_n=V_n\otimes_{\mathbb{C}}\mathbb{H}$ and regard it as a right quaternionic linear space with the vector-scalar multiplication $(\mathbf{v}\otimes_{\mathbb{C}}q')q=\mathbf{v}\otimes_{\mathbb{C}}q'q$ and a quaternionic representation of $G$ with the action $g\cdot(\mathbf{v}\otimes_{\mathbb{C}}q)=(g\cdot\mathbf{v})\otimes_{\mathbb{C}}q$. The quaternionic representation $qV_n$\index{representation!irreducible quaternionic of $\mathrm{U}(1)$!$qV_n$} is irreducible and is called a \emph{quaternionic representation of complex type}.\index{representation!quaternionic!of complex type}

Assume that an irreducible complex representation $V$ is not of complex type. This may happen by one of two mutually exclusive reasons.
\begin{enumerate}
  \item There exists a conjugate-linear map $j\colon V\to V$ that commutes with the representation and satisfies the condition $j^2=\id$.
  \item There exists a conjugate-linear map $j\colon V\to V$ that commutes with the representation and satisfies the condition $j^2=-\id$.
\end{enumerate}

As an example, consider the trivial representation $V_0=\mathbb{C}^1$ of the group $\mathrm{U}(1)$ with the action $g\cdot z=z$. This representation clearly commutes with the map $j(z)=z^*$. For such a representation $V$, there always exists a real irreducible representation $U$ such that $cU=V$. Here $cU=U\otimes_{\mathbb{R}}\mathbb{C}$ with the vector-scalar multiplication $(\mathbf{u}\otimes_{\mathbb{R}}z')z=\mathbf{u}\otimes_{\mathbb{R}}z'z$ and the action of $G$ by $g\cdot(\mathbf{u}\otimes_{\mathbb{R}}z)=(g\cdot\mathbf{u})\otimes_{\mathbb{R}}z$. In this case the representation $V$ (resp. $U$) is called a \emph{complex}\index{representation!complex!of real type} (\emph{resp. real})\index{representation!real!of real type} \emph{representation of real type}, while the irreducible representation $W=qcU$ is called a \emph{quaternionic representation of real type}.\index{representation!quaternionic!of real type}

As an another example, consider the group $G=\mathrm{SU}(2)$ of unitary $2\times 2$ matrices with unit determinant and elements $g=\left(
\begin{smallmatrix}
  \alpha & \beta \\
  -\beta^* & \alpha^*
\end{smallmatrix}
\right)$. Put $V_{1/2}=\mathbb{C}^2$, where the group $G$ acts by matrix-vector multiplication, and $j(z_1,z_2)=(z^*_2,-z^*_1)$. Then we have
\[
\begin{aligned}
gj(z_1,z_2)&=
\begin{pmatrix}
  \alpha & \beta \\
  -\beta^* & \alpha^*
\end{pmatrix}
\begin{pmatrix}
  z^*_2 \\
  -z^*_1
\end{pmatrix}
=
\begin{pmatrix}
\alpha z^*_2-\beta z^*_1 \\
-\beta^*z^*_2-\alpha^*z^*_1
\end{pmatrix}
,\\
jg(z_1,z_2)&=j
\begin{pmatrix}
  \alpha & \beta \\
  -\beta^* & \alpha^*
\end{pmatrix}
\begin{pmatrix}
  z_1 \\
  z_2
\end{pmatrix}
=j
\begin{pmatrix}
\alpha z_1+\beta z_2 \\
-\beta^*z_1+\alpha^*z_2
\end{pmatrix}
=
\begin{pmatrix}
-\beta z^*_1+\alpha z^*_2 \\
-\alpha^*z^*_1-\beta^*z^*_2
\end{pmatrix}
,
\end{aligned}
\]
see also \cite[Proposition~3]{MR1090745}. In this case, there exists an irreducible quaternionic representation, call it $W_{1/2}$, such that $c'W_{1/2}=V_{1/2}$, where the representation $c'W_{1/2}$ has the same underlying set as $W_{1/2}$ and the same action of $G$, but regard it as a $\mathbb{C}$-linear space. Any irreducible complex representation $V$ with the above property is called a \emph{complex representation of quaternionic type},\index{representation!complex!of quaternionic type} the corresponding representation $W$ a \emph{quaternionic representation of quaternionic type},\index{representation!quaternionic!of quaternionic type} and the real irreducible representation $U=rV=rc'W$ a \emph{real representation of quaternionic type}.\index{representation!real!of quaternionic type} It is easy to see that $W_{1/2}=\mathbb{H}^1$, where a matrix $g$ acts by $g\cdot q=(\alpha+\beta\mathrm{j})q$. The representation $W_{1/2}$ establishes an isomorphism between the group $\mathrm{SU}(2)$ and the group $\mathrm{Sp}(1)$ of all $q\in\mathbb{H}$ with $qq^*=1$. The irreducible real representation $U_{1/2}=rV_{1/2}=rc'W_{1/2}$ acts in $\mathbb{R}^4$ by
\[
g\cdot\mathbf{u}=
\begin{pmatrix}
  \RE\alpha & -\IM\alpha & \RE\beta & -\IM\beta \\
  \IM\alpha & \RE\alpha & \IM\beta & \RE\beta \\
  -\RE\beta & -\IM\beta & \RE\alpha & \IM\alpha \\
  \IM\beta & -\RE\beta & -\IM\alpha & \RE\alpha
\end{pmatrix}
\mathbf{u},\qquad\mathbf{u}\in\mathbb{R}^4,
\]
see \cite[Example~15]{MR1090745}.

It turns out that for any compact topological group $G$ and for any skew field $\mathbb{K}$, the partition of the set $\hat{G}_{\mathbb{K}}$ of equivalence classes of irreducible representations of $G$ over $\mathbb{K}$ into representations of real, complex, and quaternionic types is exhaustive, see \cite[Theorem~3.57]{MR0252560} and \cite[Section~II.5]{MR1410059}.

\subsection{Manifolds}

A \emph{chart}\index{chart} in a set $M$ is a pair $(\mathcal{U},\varphi)$, where $\mathcal{U}$ is a subset of $M$, and $\varphi$ is a one-to-one map from $\mathcal{U}$ to an open subset $\varphi(\mathcal{U})$ of the coordinate space $\mathbb{R}^d$ or $\mathbb{C}^d$.

The \emph{overlap map}\index{overlap map} is the map
\[
\varphi_{\beta}\circ\varphi^{-1}_{\alpha}\colon\varphi_{\alpha}
(\mathcal{U}_{\alpha}\cap\mathcal{U}_{\beta})\to\varphi_{\beta}
(\mathcal{U}_{\alpha}\cap \mathcal{U}_{\beta})
\]
provided that the intersection $\mathcal{U}_{\alpha}\cap\mathcal{U}_{\beta}$ of the domains of the charts $(\mathcal{U}_{\alpha},\varphi_{\alpha})$ and $(\mathcal{U}_{\beta},\varphi_{\beta})$ is not empty.

A family $\mathcal{A}=\{(\mathcal{U}_{\alpha},\varphi_{\alpha})\colon\alpha\in A\}$ of charts on $M$ is called an \emph{differentiable}\index{atlas!differentiable} (resp. \emph{real-analytic},\index{atlas!real-analytic} resp. \emph{holomorphic})\index{atlas!holomorphic} \emph{atlas} if the sets $\mathcal{U}_{\alpha}$ cover all of $M$, and for any two charts $(\mathcal{U}_{\alpha},\varphi_{\alpha})$ and $(\mathcal{U}_{\beta},\varphi_{\beta})$ with $\mathcal{U}_{\alpha}\cap\mathcal{U}_{\beta}\neq\varnothing$, both the domain and range of the corresponding overlap map are open subsets of $\mathbb{R}^d$ (resp. $\mathbb{R}^d$, resp. $\mathbb{C}^d$), and the overlap map is infinitely differentiable (resp. real-analytic, resp. holomorphic).

Two atlases are \emph{equivalent} if their union is also an atlas. A differentiable (resp. real-analytic, resp. holomorphic) \emph{structure}\index{structure!differentiable}\index{structure!real-analytic}\index{structure!holomorphic} on $M$ is an equivalence class of differentiable (resp. real-analytic, resp. holomorphic) atlases. A set $M$ together with a differentiable (resp. real-analytic, resp. holomorphic) structure is called a differentiable (resp. real-analytic, resp. holomorphic) \emph{manifold}.\index{manifold!differentiable}\index{manifold!real-analytic}\index{manifold!holomorphic}  A chart is called \emph{admissible}\index{chart!admissible} if it belongs to an atlas from the structure. The domains of the admissible charts form a base for a topology on $M$. To avoid pathologies, we assume that this topology is Hausdorff and second-countable.

Let $M_1$ and $M_2$ be two manifolds of the same type, and let $\Phi\colon M_1\to M_2$ be a continuous map. Let $x\in M$ and let $(\mathcal{V},\psi)$ be a chart for $M_2$ with $\Phi(x)\in\mathcal{V}$. It is possible to choose a chart $(\mathcal{U},\varphi)$ for $M_1$ with $\Phi(\mathcal{U})\subset\mathcal{V}$. The map
\[
\Phi_{\varphi\psi}\colon\varphi(\mathcal{U})\to\psi(\mathcal{V}),\qquad x\mapsto\psi\circ\Phi\circ\varphi^{-1}(x)
\]
is called the \emph{the local representative\index{local representative} of $\Phi$ with respect to the charts $(\mathcal{U},\varphi)$ and $(\mathcal{V},\psi)$}. A map $\Phi$ is called differentiable\index{map!differentiable} (resp. real-analytic,\index{map!real-analytic} resp. holomorphic)\index{map!holomorphic} if for every $x\in M$ and every chart $(\mathcal{V},\psi)$ for $M_2$ with $\Phi(x)\in\mathcal{V}$ there exists a chart $(\mathcal{U},\varphi)$ for $M_1$ such that $\Phi(\mathcal{U})\subset\mathcal{V}$ and the local representative $\Phi(\mathcal{U})\subset\mathcal{V}$ is differentiable (resp. real-analytic, resp. holomorphic). If $\Phi$ is invertible and both $\Phi$ and $\Phi^{-1}$ are differentiable (resp. real-analytic, resp. holomorphic), then $\Phi$ is called a \emph{diffeomorphism}.\index{diffeomorphism}

A group $G$ is called a \emph{Lie group}\index{Lie group} if it has a structure of either real-analytic or holomorphic manifold, and the map $G\times G\to G$, $(g_1,g_2)\mapsto g^{-1}_1g_2$ is either real-analytic or holomorphic.

\begin{example}
Let $S^d\subset\mathbb{R}^{d+1}$ be the unit sphere.\index{sphere} Define two chart domains $\mathcal{U}_{\pm}\subset S^d$ by
$\mathcal{U}_{\pm}=S^d\setminus\{(0,\dots,0,\pm 1)^{\top}\}$ and tho chart maps $\varphi_{\pm}\colon\mathcal{U}_{\pm}\to\mathbb{R}^d$ by
\[
\varphi_{\pm}(x_1,\dots,x_{d+1})=\frac{1}{1\mp x_{d+1}}(x_1,\dots,x_d)^{\top}.
\]
It is easy to check that $\varphi_{\pm}(\mathcal{U}_{\pm})=\mathbb{R}^d\setminus\{\mathbf{0}\}$ and
\[
\varphi^{-1}_+(\mathbf{x})
=\frac{1}{\|\mathbf{x}\|^2+1}(2x_1,\dots,2x_d,\|\mathbf{x}\|^2-1)^{\top}.
\]
The overlap map $(\varphi_-\circ\varphi^{-1}_+)(\mathbf{x})=\|\mathbf{x}\|^{-2}\mathbf{x}$ for $\mathbf{x}\in\mathbb{R}^d\setminus\{\mathbf{0}\}$ is real-analytic and makes $S^d$ into a real-analytic manifold.

In the case of $d=2$, we will also use an admissible chart
\[
\left(
\begin{smallmatrix}
  \cos\varphi\sin\theta \\
  \sin\varphi\sin\theta \\
  \cos\theta
\end{smallmatrix}
\right)\mapsto(\theta,\varphi)\in(0,\pi)\times(0,2\pi)\subset\mathbb{R}^2,
\]
the angular part of the spherical coordinates. The domain of this chart is dense in $S^2$.

In the case of $d=3$, we will also use an admissible chart
\[
\left(
\begin{smallmatrix}
  \cos(\theta/2)\cos((\psi+\varphi)/2) \\
  -\cos(\theta/2)\sin((\psi+\varphi)/2) \\
  -\sin(\theta/2)\cos((\psi-\varphi)/2) \\
  \sin(\theta/2)\sin((\psi-\varphi)/2)
\end{smallmatrix}
\right)\mapsto(\theta,\varphi,\psi)\in(0,\pi)\times(0,2\pi)\times(0,4\pi)
\subset\mathbb{R}^3
\]
with dense domain.

When $d=2$, replace the maps $\varphi_{\pm}$ with the map $\tilde{\varphi}_{\pm}\colon\mathcal{U}_{\pm}\to\mathbb{C}^1$ given by
\[
\tilde{\varphi}_+(x_1,x_2,x_3)=\frac{x_1+\mathrm{i}x_2}{1-x_3},\qquad
\tilde{\varphi}_-(x_1,x_2,x_3)=\frac{x_1-\mathrm{i}x_2}{1+x_3}.
\]
For this choice, we obtain $(\tilde{\varphi}_-\circ\tilde{\varphi}^{-1}_+)(z)=z^{-1}$. By verifying the Cauchy--Riemann equations, we see that this map is a holomorphic on $\mathbb{C}^1\setminus\{0\}$. We made the real-analytic structure into the holomorphic one.

\end{example}

\begin{example}
Let $\mathbb{C}P^1$ be the \emph{complex projective line},\index{complex projective line} the set of $1$-dimensional subspaces of $\mathbb{C}^2$. If $(z_0,z_1)\in\mathbb{C}^2\setminus\{\mathbf{0}\}$, denote by $\pcoor{z_0:z_1}$ the line through this point. For $j\in\{0,1\}$, define the charts $(\mathcal{U}_j,\varphi_j)$ as follows:
\[
\mathcal{U}_j=\{\,\pcoor{z_0:z_1}\in\mathbb{C}P^1\colon z_j\neq 0\,\},\qquad\varphi_0(\pcoor{z_0:z_1})=\frac{z_1}{z_0},\quad
\varphi_1(\pcoor{z_0:z_1})=\frac{z_0}{z_1}.
\]
The holomorphic overlap map is $(\varphi_0\circ\varphi^{-1}_1)(z)=z^{-1}$. The maps $\Phi_{\pm}\colon\mathcal{U}_{\pm}\to\mathbb{C}P^1$ given by
\[
\Phi_+(x,y,z)=\pcoor{\tilde{\varphi}_+(x_1,x_2,x_3):1},\qquad
\Phi_-(x,y,z)=\pcoor{1:\tilde{\varphi}_-(x_1,x_2,x_3)}
\]
coincide on the sphere without the poles. Together, they give a holomorphic diffeomorphism between $S^2$ and $\mathbb{C}P^1$.
\end{example}

\begin{example}
For the group $\mathrm{U}(1)=\{\,z\in\mathbb{C}\colon|z|=1\,\}$, the map $\Phi\colon S^1\to\mathrm{U}(1)$, $(x_1,x_2)^{\top}\to x_1+\mathrm{i}x_2$ is one-to-one. For the group $\mathrm{SO}(2)$ of orthogonal $2\times 2$ matrices with unit determinant, the map $\Psi\colon\mathrm{SO}(2)\to S^1$, $\left(
\begin{smallmatrix}
  a & b \\
  -b & a \\
\end{smallmatrix}
\right)\mapsto(a,b)^{\top}$, is one-to-one. As $(\mathcal{U},\psi)$ runs over the set of all admissible atlases for the real-analytic structure on $S^1$, the charts $(\Phi(\mathcal{U}),\psi\circ\Phi^{-1})$ constitute the structure of a Lie group on $\mathrm{U}(1)$.\index{Lie group!$\mathrm{U}(1)$} We will use an admissible chart $\mathrm{e}^{\mathrm{i}\varphi}\mapsto\varphi\in(0,2\pi)\subset\mathbb{R}^1$ with dense domain.

The charts $(\Psi^{-1}(\mathcal{U}),\psi\circ\Psi^)$ constitute the structure of a Lie group on $\mathrm{SO}(2)$.\index{Lie group!$\mathrm{SO}(2)$} We will use an admissible chart $\left(
\begin{smallmatrix}
  \cos\varphi & -\sin\varphi \\
  \sin\varphi & \cos\varphi \\
\end{smallmatrix}
\right)\mapsto\varphi\in(0,2\pi)\subset\mathbb{R}^1$ with dense domain.

For the group $\mathrm{SU}(2)$ of unitary $2\times 2$ matrices with unit determinant, the map $\Phi\colon\mathrm{SU}(2)\to S^3$, $\left(
\begin{smallmatrix}
  a & b \\
  -b^* & a^* \\
\end{smallmatrix}
\right)\mapsto(\RE a,\IM a,-\RE b,\IM b)^{\top}$, is one-to-one. The charts $(\Phi(\mathcal{U}),\psi\circ\Phi^{-1})$ constitute the structure of a Lie group on $\mathrm{SU}(2)$.\index{Lie group!$\mathrm{SU}(2)$} We will use an admissible chart $\left(
\begin{smallmatrix}
  \cos(\beta/2)\mathrm{e}^{-\mathrm{i}(\gamma+\alpha)/2} & \sin(\beta/2)\mathrm{e}^{\mathrm{i}(\gamma-\alpha)/2} \\
  -\sin(\beta/2)\mathrm{e}^{-\mathrm{i}(\gamma-\alpha)/2} & \cos(\beta/2)\mathrm{e}^{\mathrm{i}(\gamma+\alpha)/2}
\end{smallmatrix}
\right)\mapsto(\alpha,\beta,\gamma)\in(0,2\pi)\times(0,\pi)\times(0,4\pi)\subset\mathbb{R}^3$ with dense domain and call it the \emph{Euler angles}.\index{Euler angles}
\end{example}

\subsection{Bundles}

\begin{definition}
A quadruple $\lambda=(P,H,B,\tilde{\pi})$ is called a \emph{principal bundle}\index{bundle!principal} if and only if
\begin{itemize}
  \item the \emph{total space}\index{total space!of a principal bundle} $P$ and the \emph{base space}\index{base space!of a principal bundle} $B$ are either real-analytic of holomorphic manifolds, the \emph{fibre}\index{fibre!of a principal bundle} $H$ is a Lie group;
  \item $H$ acts on $P$ from the right, and the map $P\times H\to P$, $(p,h)\mapsto p\cdot h$ is either real analytic or holomorphic;
  \item the \emph{bundle projection}\index{bundle projection!of a principal bundle} $\tilde{\pi}\colon P\to B$ is either a real-analytic or holomorphic map that satisfies to the following \emph{local triviality condition}:\index{local triviality condition} for every $b\in B$, there exists a neighbourhood $\mathcal{U}$ of $b$, and either a real-analytic or holomorphic one-to-one map $f\colon\mathcal{U}\times H\to\tilde{\pi}^{-1}(\mathcal{U})$ such that for all $u\in\mathcal{U}$ and for all $h$, $h'\in H$ we have
      \[
      \tilde{\pi}(f(u,h))=u,\qquad f(u,hh')=f(u,h)\cdot h'.
      \]
\end{itemize}
\end{definition}

\begin{example}[A cylinder and a Möbius bundle]
Let $n$ be an integer. Put
\[
P_n=\{\,(\cos\theta,\sin\theta,r\cos(n\theta/2),r\sin(n\theta/2))^{\top}\in\mathbb{R}^4\colon\theta,r\in\mathbb{R}\,\}.
\]
The bundle projection $\tilde{\pi}(p_1,p_2,p_3,p_4)=(p_1,p_2)$ maps $P_n$ onto the circle $B=S^1\subset\mathbb{R}^2$. The group $H$ is $\mathbb{R}^1$ acting by scalar-vector multiplication. The principal bundle $\lambda_n=(P_n,H,B,\tilde{\pi})$ is a \emph{cylinder}\index{cylinder} for $n=0$ and a Möbius bundle\index{Möbius bundle} if $n=1$.
\end{example}

\begin{example}[Spin groups]\label{ex:spin}
Let $n$ be a positive integer. There is a Lie group called the \emph{spin group}\index{spin group} and denoted by $\mathrm{Spin}(n)$, and a map $\tilde{\pi}_n\colon\mathrm{Spin}(n)\to\mathrm{SO}(n)$ such that the quadruple $\lambda_n=(\mathrm{Spin}(n),\mathrm{O}(1),\mathrm{SO}(n),\tilde{\pi}_n)$ is a principal bundle, see \cite[Subsection~1.2.1]{MR3410545}. In particular, $\mathrm{Spin}(1)=\mathrm{O}(1)$ with $\tilde{\pi}_1(t)=t^2$, and $\mathrm{Spin}(2)=\mathrm{U}(1)$ with
\begin{equation}\label{eq:51}
\tilde{\pi}_2(\mathrm{e}^{\mathrm{i}\theta})=\left(
\begin{smallmatrix}
  \cos(2\theta) & -\sin(2\theta) \\
  \sin(2\theta) & \cos(2\theta)
\end{smallmatrix}
\right).
\end{equation}
Under the identification of $\mathrm{SO}(2)$ with $\mathrm{U}(1)$, the map $\tilde{\pi}_2$ is again a ``square'' map $z\mapsto z^2$.

The map $\tilde{\pi}_3\colon\mathrm{Spin}(3)=\mathrm{SU}(2)\to\mathrm{SO}(3)$ has the form
\[
\tilde{\pi}_3\left(
\begin{smallmatrix}
  a & b \\
  -b^* & a^*
\end{smallmatrix}
\right)=\left(
\begin{smallmatrix}
  \RE(a^2-b^2) & \IM(a^2+b^2) & -2\RE(ab) \\
  -\IM(a^2-b^2) & \RE(a^2+b^2) & 2\IM(ab)  \\
  2\RE(ab^*) & 2\IM(ab^*) & |a|^2-|b|^2
\end{smallmatrix}
\right).
\]
\end{example}

\begin{example}[The Hopf bundle]\label{ex:cpp}
Let $P=\mathrm{SU}(2)$, $H$ be the subgroup of diagonal matrices in $P$, isomorphic to $\mathrm{U}(1)$, $B=\mathbb{C}P^1$, and let $\tilde{\pi}\colon P\to B$ acts by
\[
\tilde{\pi}\left(
\begin{smallmatrix}
  a & b \\
  -b^* & a^*
\end{smallmatrix}
\right)=\pcoor{a:b}.
\]
The quadruple $(\mathrm{SU}(2),\mathrm{U}(1),\mathbb{C}P^1,\tilde{\pi})$ is a principal bundle over $\mathbb{C}P^1$ called the \emph{Hopf bundle}.\index{Hopf bundle} The \emph{fibre} $\tilde{\pi}^{-1}(\pcoor{\alpha:\beta})$ of this bundle over a point $\pcoor{a:b}\in\mathbb{C}P^1$ is a copy of the \emph{torsor}\index{torsor} (the underlying set) of the group $\mathrm{U}(1)$, embedded into $\mathrm{SU}(2)$ as the subset $\left(
\begin{smallmatrix}
  \mathrm{e}^{\mathrm{i}\theta}a & \mathrm{e}^{\mathrm{i}\theta}b \\
  -\mathrm{e}^{-\mathrm{i}\theta}b^* & \mathrm{e}^{-\mathrm{i}\theta}a^*
\end{smallmatrix}
\right)$ with $\theta\in\mathbb{R}$.
\end{example}

\begin{example}[Principal bundles over a sphere]\label{ex:principals2}
Let $H$ be the group of orthogonal $3\times 3$ matrices $h$ with $\det h=h_{33}=1$. In fact, $H$ is isomorphic to the group $\mathrm{SO}(2)$. The quadruple $(\mathrm{SO}(3),\mathrm{SO}(2),S^2,\tilde{\pi})$, where $\tilde{\pi}(g)=(0,0,1)g$, is a principal bundle over $S^2$. The fibre of this bundle over a point
\begin{equation}\label{eq:44}
b=(2\RE(\alpha\beta^*),2\IM(\alpha\beta^*),|\alpha|^2-|\beta|^2)^{\top}\in S^2\subset\mathbb{R}^3
\end{equation}
is a copy of $\mathrm{SO}(2)$ embedded into $\mathrm{SO}(3)$ as the subset
\begin{equation}\label{eq:43}
\left(
\begin{smallmatrix}
  \RE(\mathrm{e}^{2\mathrm{i}\theta}(\alpha^2-\beta^2)) & \IM(\mathrm{e}^{2\mathrm{i}\theta}(\alpha^2+\beta^2)) & -2\RE(\mathrm{e}^{2\mathrm{i}\theta}\alpha\beta) \\
  -\IM(\mathrm{e}^{2\mathrm{i}\theta}(\alpha^2-\beta^2)) & \RE(\mathrm{e}^{2\mathrm{i}\theta}(\alpha^2+\beta^2)) & 2\IM(\mathrm{e}^{2\mathrm{i}\theta}\alpha\beta) \\
  2\RE(\alpha\beta^*) & 2\IM(\alpha\beta^*) & |\alpha|^2-|\beta|^2
\end{smallmatrix}
\right)
\end{equation}
with $\theta\in\mathbb{R}$.

The quadruple $(\mathrm{O}(3),\mathrm{O}(2),S^2,\tilde{\pi})$ is another principal bundle over $S^2$.
\end{example}

Let $\lambda=(P,H,B,\tilde{\pi})$ be a principal bundle. Assume that $H$ acts on either a real-analytic or a holomorphic manifold $F$ from the left, and the map $H\times F\to F$, $(h,f)\mapsto h\cdot f$ is either real-analytic or holomorphic. The group $H$ acts on the right on the Cartesian product $P\times F$ by $(p,f)\cdot h=(p\cdot h,h^{-1}\cdot f)$. Denote by $E$ the set of orbits for this action. Denote the orbit of a point $(p,f)\in P\times F$ by $[p,f]$. Define the map $\pi\colon E\to B$ by $\pi([p,f])=\tilde{\pi}(p)$. The map $\pi$ is continuous with respect to the quotient topology on $E$.

\begin{definition}
The triple $(E,B,\pi)$ is called the \emph{fibre bundle\index{bundle!fibre} associated with the principal bundle} $\lambda=(P,H,B,\tilde{\pi})$ by the action $(h,f)\mapsto h\cdot f$. The manifold $E$ is the \emph{total space}\index{total space!of a fibre bundle} of the bundle, the manifold $B$ is its \emph{base space},\index{base space!of a fibre bundle} the map $\pi$ is the \emph{bundle projection},\index{bundle projection!of a fibre bundle} while $F$ is the \emph{fibre}.\index{fibre!of a fibre bundle}
\end{definition}

The triple $(E,B,\pi)$ satisfies the local triviality condition: for each $b\in B$ there is an open set $\mathcal{U}$ containing $b$ and a homeomorphism $\Psi\colon\pi^{-1}(\mathcal{U})\to\mathcal{U}\times F$ such that for all $b\in\mathcal{U}$ and $f\in F$ we have $\pi(\Psi^{-1}(b,f))=b$. If $H$ were not a Lie group, then this statement would be wrong, see a counterexample in \cite[p.~219]{MR3444405}.

In what follows, we consider a particular case, when the fibre $F=L_0$ is a finite-dimensional linear space over a (skew) field $\mathbb{K}$, and $H$ is a Lie group that acts on $L_0$ by a representation $\theta$. The fibre bundle $(E,B,\pi)$ is called a \emph{vector bundle}\index{bundle!vector} (resp. a \emph{tensor bundle},\index{bundle!tensor} resp. a \emph{line bundle})\index{bundle!line} if the elements of $L_0$ are vectors (resp. tensors, resp. elements of $\mathbb{K}^1$).

\begin{definition}
A \emph{cross-section}\index{cross-section} of a fibre bundle $(E,B,\pi)$ is a map $f\colon B\to E$ satisfying $\pi(f(b))=b$ for all $b\in B$.
\end{definition}

Conversely, consider a vector bundle $(E,B,\pi)$, that is, for any $b\in B$, the set $\pi^{-1}(b)$ is an $n$-dimensional linear space over a (skew) field $\mathbb{F}$, and the local triviality condition is satisfied. In addition, assume that the set $\pi^{-1}(b)$ carries an inner product $(\cdot,\cdot)$ which is either a real-analytic or holomorphic function of $b$. Let $P$ be the set of points $\mathbf{p}=(p_1,\dots,p_n)\in E^n$ such that $\pi(p_1)=\pi(p_2)=\cdots=\pi(p_n)=b\in B$, and the vectors $p_1$, \dots, $p_n$ constitute an orthonormal basis in $\pi^{-1}(b)$.

\begin{definition}
The quadruple $(P,H,B,\tilde{\pi})$, where $\tilde{\pi}(\mathbf{p})=\pi(p_1)$ and $H$ acts on $P$ by a representation $(p_1,\dots,p_n)h=(p_1\cdot h,\dots,p_n\cdot h)$, is called the \emph{principal bundle of orthonormal frames}\index{bundle!principal!of orthonormal frames} on the vector bundle $(E,B,\pi)$.
\end{definition}

\begin{example}\label{ex:spinweight}
Consider the principal fibre bundle $(\mathrm{SU}(2),\mathrm{U}(1),\mathbb{C}P^1,\tilde{\pi})$ of Example~\ref{ex:cpp}. For any integer or half-integer $s$, consider the representation $z\mapsto z^{2s}$ of the group $\mathrm{U}(1)$ in the linear space $L_0=\mathbb{C}^1$. The cross-sections of the line bundle $(E,\mathbb{C}P^1,\pi_{2s})$ associated to the principal fibre bundle by the above representation, are called \emph{functions of spin weight~$s$},\index{functions of spin weight $s$} see \cite{MR484262,MR272918,MR194172}.

The line bundle $(E,\mathbb{C}P^1,\pi_2)$ is the tangent bundle $T\mathbb{C}P^1$, see proof in \cite[p.~92]{MR2093043}. Let $(\mathrm{Spin}(T\mathbb{C}P^1),\mathrm{U}(1),\mathbb{C}P^1,\tilde{\pi})$ be the principal bundle of orthonormal frames corresponding to $T\mathbb{C}P^1$. The fibres of this bundle are copies of $\mathrm{U}(1)$.

On the other hand, consider the principal fibre bundle $(\mathrm{SO}(3),\mathrm{SO}(2),S^2,\tilde{\pi}')$ of Example~\ref{ex:principals2}. The representation $\left(
\begin{smallmatrix}
  \cos\theta & -\sin\theta \\
  \sin\theta & \cos\theta
\end{smallmatrix}
\right)\mapsto\mathrm{e}^{2\mathrm{i}\theta}$ determines a line bundle over $S^2$. In fact, it is the tangent bundle $TS^2$, in which the tangent space at a point $\mathbf{u}\in S^2\subset\mathbb{R}^3$ is first identified with the two-dimensional real linear space $T_{\mathbf{u}}S^2=\{\,\mathbf{v}\in\mathbb{R}^3\colon(\mathbf{v},\mathbf{u})=0\,\}$, and then $T_{\mathbf{u}}S^2$ is converted into a complex line by defining $z\mathbf{v}=\RE z\mathbf{v}+\IM z(\mathbf{u}\times\mathbf{v})$. Denote by $(\mathrm{SO}(TS^2),\mathrm{SO}(2),S^2,\pi')$ the principal bundle of orthonormal frames corresponding to $TS^2$. The fibres of this bundle are copies of $\mathrm{SO}(2)$.

The map $S^2\to\mathbb{C}P^1$, $\mathbf{u}\mapsto\pcoor{u_1+u_2\mathrm{i}:1-u_3}$ is a homeomorphism. Consider the map $\Theta\colon\mathrm{Spin}(T\mathbb{C}P^1)\to\mathrm{SO}(TS^2)$ given by $\Theta(\pcoor{u_1+u_2\mathrm{i},1-u_3},z)=(\mathbf{u},z^2)$. By construction, this map is fibre-preserving. Moreover, for all $(\pcoor{u_1+u_2\mathrm{i}:1-u_3},z)\in\mathrm{Spin}(T\mathbb{C}P^1)$ and for all $w\in\mathrm{U}(1)=\mathrm{Spin}(2)$ we have
\[
\Theta((\pcoor{u_1+u_2\mathrm{i}:1-u_3},z)w)
=\Theta(\pcoor{u_1+u_2\mathrm{i}:1-u_3},z)w^2.
\]
By \cite[Definition~2.1]{MR3410545}, the map $\Theta$ defines a \emph{spin structure}\index{spin structure} on the manifold $S^2$.

Recall that the \emph{complex spin representation}\index{representation!complex spin} of the group $\mathrm{Spin}(2)=\mathrm{U}(1)$ is the complex representation $\rho_2(\mathrm{e}^{\mathrm{i}\theta})=\left(
\begin{smallmatrix}
  \mathrm{e}^{\mathrm{i}\theta} & 0 \\
  0 & \mathrm{e}^{-\mathrm{i}\theta}
\end{smallmatrix}
\right)$. Its irreducible components $\rho_2^+(\mathrm{e}^{\mathrm{i}\theta})=\mathrm{e}^{\mathrm{i}\theta}$ and $\rho_2^-(\mathrm{e}^{\mathrm{i}\theta})=\mathrm{e}^{-\mathrm{i}\theta}$ are called \emph{complex spin half-representations}.\index{half-representation!complex spin} The space of the representation $\rho_2$ is equal to the direct sum $V_1\oplus tV_1$ of linear spaces where the above components act. Note that these representations do not descend to the group $\mathrm{SO}(2)$ since $\rho_2(-1)=-\id$ and $\rho_2^{\pm}(-1)=-1$.

The complex vector bundle $(\Sigma\mathbb{C}P^1,\mathbb{C}P^1,\pi)$ with fibre $V_1\oplus tV_1$ is the bundle associated with the principal fibre bundle $(\mathrm{SU}(2),\mathrm{U}(1),\mathbb{C}P^1,\tilde{\pi})$ by the complex spin representation. We see that this bundle is the direct sum of the two bundles $(\Sigma^+\mathbb{C}P^1,\mathbb{C}P^1,\pi_1)$ and $(\Sigma^-\mathbb{C}P^1,\mathbb{C}P^1,\pi_{-1})$ with fibres $V_1$ and $tV_1$ that correspond to the cases of $s=\pm 1$ above. The cross-sections of the bundle $(\Sigma\mathbb{C}P^1,\mathbb{C}P^1,\pi)$ are called \emph{spinor fields}.\index{spinor field}

Note that the map $j\colon V_1\oplus tV_1\to V_1\oplus tV_1$, $j(z_1,z_2)=(z^*_2,z^*_1)$ is a real structure on the space $V_1\oplus tV_1$ that commutes with the complex spin representation. The corresponding \emph{real spin representation}\index{representation!real spin} $\mathrm{e}^{\mathrm{i}\theta}\mapsto\left(
\begin{smallmatrix}
  \cos\theta & -\sin\theta \\
  \sin\theta & \cos\theta
\end{smallmatrix}
\right)$ is $rV_1$. Similarly, the map $j:V_1\oplus tV_1\to V_1\oplus tV_1$, $j(z_1,z_2)=(z^*_2,-z^*_1)$ is a quaternionic structure on the space $V_1\oplus tV_1$ that commutes with the complex spin representation. The corresponding \emph{quaternionic spin representation}\index{representation!quaternionic spin} $\mathrm{e}^{\mathrm{i}\theta}\mapsto\mathrm{e}^{\mathrm{i}\theta}\in\mathrm{Sp}(1)$
is $qV_1$.

It turns out that not all manifolds may carry a spin structure, there exist topological obstructions for that, see \cite[Section~3.1]{MR3410545} and \cite[Section~II.3]{MR1031992}.

Finally, note that the maps $\tilde{\pi}_1(t)=t^2$ and $\tilde{\pi}_2(z)=z^2$ of Example~\ref{ex:spin} lead to the idea of a ``square root'' between vectors and spinors, see \cite{MR1443911}.
\end{example}

\begin{example}[The edth operator]
There exists a beautiful coordinate-free description of the operator $\eth$ in \cite{MR671187}, where it is associated with the $\partial$-operator and the Dolbeault resolution of complex analysis, see also \cite[Equation~(4.12.15)]{MR917488}. We give a coordinate description instead. Note that $\eth$ is the phonetic symbol for voiced ``th'', see \cite[Section~4.12]{MR917488}.

Introduce a chart $\mathbb{C}P^1\setminus\{\pcoor{1:0}\}\to\mathbb{C}$, $\pcoor{z_1:z_2}\mapsto\zeta=\frac{z_1}{z_2}$. This is a standard stereographic correspondence between the \emph{Argand plane}\index{Argand plane} of $\zeta$ and the complex projective line without the point at infinity. To include the above point, we regard $\zeta=\infty$ as a ``point'' added to the Argand plane, and map it to the point $\pcoor{1:0}\in\mathbb{C}P^1$. The union $\mathbb{C}\cup\{\infty\}$ is the Riemann sphere.\index{Riemann sphere}

Let $f$ be a smooth cross-section of the vector bundle $(\mathrm{SU}(2),\mathbb{C}P^1,\pi_s)$ of Example~\ref{ex:spinweight}. We define
\[
\begin{aligned}
(\eth f)(\zeta)&=(1+\zeta\zeta^*)^{1-s}\frac{\partial}{\partial\zeta}
((1+\zeta\zeta^*)^sf(\zeta)),\\
(\overline{\eth} f)(\zeta)&=(1+\zeta\zeta^*)^{1+s}\frac{\partial}{\partial\zeta^*}
((1+\zeta\zeta^*)^{-s}f(\zeta)),
\end{aligned}
\]
where we adapted definition \cite[Equation~(4.15.117)]{MR917488} in order to be consistent with cosmological literature. Then, the \emph{spin rising operator}\index{spin rising operator} $\eth$ maps smooth cross-sections of the bundle $(\mathrm{SU}(2),\mathbb{C}P^1,\pi_s)$ to smooth cross-sections of the bundle $(\mathrm{SU}(2),\mathbb{C}P^1,\pi_{s+1})$. Similarly, the \emph{spin lowering operator}\index{spin lowering operator} $\overline{\eth}$ maps smooth cross-sections of the bundle $(\mathrm{SU}(2),\mathbb{C}P^1,\pi_s)$ to smooth cross-sections of the bundle $(\mathrm{SU}(2),\mathbb{C}P^1,\pi_{s-1})$.
\end{example}

\subsection{Induced representations}

Let $H$ be a closed subgroup of a compact Lie group $G$, and let $B=G/H$. Consider the principal fibre bundle $(G,H,B,\tilde{\pi})$, where $\tilde{\pi}(g)=gH$. Let $(E,B,\pi)$ be the vector bundle associated to the above principal fibre bundle by a representation of $H$ ia a finite-dimensional linear space $L_0$ over a (skew) field $\mathbb{K}$. Define a left action of $G$ on $G\times L_0$ by $g_0\cdot(g,\mathbf{l})=(g_0g,\mathbf{l})$. The above action induces an action of $G$ on $E$. It is easy to check that with this action, the vector bundle $(E,B,\pi)$ becomes homogeneous.

\begin{definition}[\cite{MR0498996}]
A vector bundle  $(E,G/H,\pi)$ is called \emph{homogeneous}\index{bundle!vector!homogeneous} if $G$ acts on $E$ such that $g\cdot\pi^{-1}(x)=\pi^{-1}(g\cdot x)$ and the action of $G$ on $\pi^{-1}(x)$ is linear for all $x\in G/H$ and $g\in G$.
\end{definition}

The construction given above describes all homogeneous vector bundles, see \cite{MR0498996}.

Let $C(G,L_0)$ be the linear space of all continuous functions $f\colon G\to L_0$ satisfying the condition $f(gh)=h^{-1}\cdot f(g)$, $h\in H$, $g\in G$. Introduce the notation
\begin{equation}\label{eq:37}
\langle f_1,f_2\rangle=\int_G(f_1(g),f_2(g))\,\mathrm{d}g,
\end{equation}
where $f_1$, $f_2\in C(G,L_0)$, and $\mathrm{d}g$ is the probabilistic $G$-invariant measure on the Borel $\sigma$-field of subsets of $G$. We observe that the map $C(G,L_0)\times C(G,L_0)\to\mathbb{K}$, $(f_1,f_2)\mapsto\langle f_1,f_2\rangle$, is an inner product. The map $(\tilde{\theta}(g_0)f)(g)=f(g^{-1}_0g)$ extends to a representation of $G$ in the Hilbert space $L^2(G,L_0)$, which is the completion of the linear space $C(G,L_0)$ with respect to the norm induced by the inner product \eqref{eq:37}. Moreover, the above inner product is $G$-invariant. This fact is proved in \cite[Subsection~5.3.3]{MR0498996} for the case of $\mathbb{K}=\mathbb{C}$, and can be proved literally in the same way for the remaining (skew) fields. In what follows, in similar cases we refer to proofs of our statements for the complex case.

Denote by $\Gamma E$ the $\mathbb{K}$-linear space of continuous cross-sections of the homogeneous vector bundle $(E,B,\pi)$. Let $G$ act on $\Gamma E$ by $(\theta(g)s)(x)=g\cdot s(g^{-1}x)$ for $g\in G$, $s\in\Gamma E$, and $x\in B$. The representation $(\Gamma E,\theta)$ extends to a representation of $G$ in the Hilbert space $L^2(E)$ of the square-integrable cross-sections of the vector bundle $(E,B,\pi)$ with the $G$-invariant inner product
\[
\langle s_1,s_2\rangle=\int_B(s_1(x),s_2(x))_{L_0}\,\mathrm{d}x,
\]
where $\mathrm{d}x$ is the probabilistic $G$-invariant measure on $B$. The complex case is proved in \cite[Subsection~5.3.2]{MR0498996}. We call $\theta$ the representation of $G$ \emph{induced}\index{representation!induced} by the representation $L_0$ of the subgroup $H$.

Define the map $\Gamma E\to C(G,L_0)$, $s\mapsto\tilde{s}$ by $\tilde{s}(g)=g^{-1}\cdot s(gH)$ for $g\in G$. This map extends to an equivalence between the representations $(L^2(E),\theta)$ and $(L^2(G,L_0),\tilde{\theta})$, see proof for the complex case in \cite[Lemma~5.3.4]{MR0498996}. Act by $g$ to both hand sides of this equality. We obtain, that the inverse map has the form
\begin{equation}\label{eq:40}
s(gH)=g\cdot\tilde{s}(g).
\end{equation}

\subsection{The structure of the induced representation}\label{sub:structure}

The Hilbert space $L^2(E)$ is uniquely decomposed into the Hilbert direct sum of isotypical subspaces. Let $L$ be an irreducible representation of $G$, and let $\res^G_HL$ be the restriction of this representation to the group $H$. Let $\Hom_{\mathbb{K}}(\res^G_HL,L_0)$ be the $\mathbb{K}'$-linear space of $\mathbb{K}$-linear maps from $L$ to $L_0$. The group $H$ acts on this space by $(h\cdot f)\mathbf{l}=h\cdot(f(h^{-1}\cdot\mathbf{l}))$ for $h\in H$, $f\in\Hom_{\mathbb{K}}(\res^G_HL,L_0)$, and $\mathbf{l}\in L$. Let $\Hom_{\mathbb{K}H}(\res^G_HL,L_0)$ be the isotypical subspace of the space $\Hom_{\mathbb{K}}(\res^G_HL,L_0)$ that corresponds to the trivial representation of $H$. In other words, $\Hom_{\mathbb{K}H}(\res^G_HL,L_0)$ is the subspace of elements in $\Hom_{\mathbb{K}}(\res^G_HL,L_0)$ which are invariant under $H$, or the linear space of intertwining operators between $\res^G_HL$ and $L_0$.

Similarly, construct the $\mathbb{K}'$-linear space $\Hom_{\mathbb{K}G}(L,L^2(E))$ of intertwining operators between $L$ and $L^2(E)$. The celebrated \emph{Frobenius reciprocity}\index{Frobenius reciprocity} states that the two above constructed spaces are isomorphic, see \cite[Proposition~2.1]{MR0182022} and \cite[Chapter~III, Proposition~6.2]{MR1410059}. In particular, $\dim_{\mathbb{K}'}\Hom_{\mathbb{K}H}(\res^G_HL,L_0)=\dim_{\mathbb{K}'}\Hom_{\mathbb{K}G}(L,L^2(E))$ and the representation $(L^2(E),\theta)$ contains
\[
n=\frac{\dim_{\mathbb{K}'}\Hom_{\mathbb{K}H}(\res^G_HL,L_0)}
{\dim_{\mathbb{K}'}\Hom_{\mathbb{K}G}(L,L)}
\]
copies of the irreducible representation $L$.

The particular case of the above statement, when $\mathbb{K}=\mathbb{R}$ and $H$ contains only the identity element, is called the \emph{Fine Structure Theorem},\index{Fine Structure Theorem} see \cite{MR4201900}. We used this result in Example~\ref{ex:1}.

The next step is construct an orthonormal basis in the Hilbert space $L^2(E)$ in such a way that different kinds of ``spherical harmonics'' are particular cases of our general construction. See also alternative approaches to construction of various spherical harmonics in \cite{MR825165,MR569166}.

\subsection{The construction of a basis in the space of an induced representation}\label{sub:construction}

\subsubsection{Schur's Lemma}

This result describes the structure of the space $\Hom_{\mathbb{K}G}(L_1,L_2)$ for $L_1$, $L_2\in\hat{G}_{\mathbb{K}}$.

\begin{lemma}\label{lem:Schur}\index{Schur's Lemma}
Let $L$, $L_1$, $L_2\in\hat{G}_{\mathbb{K}}$.
\begin{itemize}
  \item Any element of the space $\Hom_{\mathbb{K}G}(L_1,L_2)$ is either zero or an isomorphism. Moreover, $\Hom_{\mathbb{K}G}(L_1,L_2)=\{0\}$ if and only if $L_1$ is not equivalent to $L_2$.
  \item As a linear space (resp. as a division algebra), the set $\Hom_{\mathbb{R}G}(U,U)$ is isomorphic to $\mathbb{R}^1$ (resp. to $\mathbb{R}$) if and only if $U$ is of real type; to $\mathbb{R}^2$ (resp. to $\mathbb{C}$) if and only if $U$ is of complex type; to $\mathbb{R}^4$ (resp. to $\mathbb{H}$) if and only if $U$ is of quaternionic type.
  \item As a linear space (resp. as a division algebra), the set $\Hom_{\mathbb{C}G}(V,V)$ is isomorphic to $\mathbb{C}^1$ (resp. to $\mathbb{C}$).
  \item As a linear space (resp. as a division algebra), the set $\Hom_{\mathbb{H}G}(W,W)$ is isomorphic to $\mathbb{R}^4$ (resp. to $\mathbb{H}$) if and only if $W$ is of real type; to $\mathbb{R}^2$ (resp. to $\mathbb{C}$) if and only if $W$ is of complex type; to $\mathbb{R}^1$ (resp. to $\mathbb{R}$) if and only if $W$ is of quaternionic type.
\end{itemize}
\end{lemma}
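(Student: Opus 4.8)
The plan is to handle the four items in increasing order of difficulty, the genuinely new content being the last. The first item is the abstract form of the lemma: for $f\in\Hom_{\mathbb{K}G}(L_1,L_2)$ the kernel and the image are $G$-invariant $\mathbb{K}$-subspaces, so by irreducibility each is either trivial or everything; hence a nonzero $f$ is simultaneously injective and surjective, i.e.\ an isomorphism, and a nonzero intertwiner can exist only when $L_1\cong L_2$. Specialising to $L_1=L_2=L$, every nonzero element of $\Hom_{\mathbb{K}G}(L,L)$ is invertible, so this space is a finite-dimensional division algebra over $\mathbb{K}'$ (over $\mathbb{R}$ when $\mathbb{K}\in\{\mathbb{R},\mathbb{H}\}$, over $\mathbb{C}$ when $\mathbb{K}=\mathbb{C}$). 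For the second and third items I would then invoke the Frobenius theorem: the only finite-dimensional division algebras over $\mathbb{R}$ are $\mathbb{R}$, $\mathbb{C}$, $\mathbb{H}$, while over the algebraically closed field $\mathbb{C}$ there is only $\mathbb{C}$. This settles $\Hom_{\mathbb{C}G}(V,V)\cong\mathbb{C}$ immediately, and for a real irreducible $U$ the three possibilities $\mathbb{R},\mathbb{C},\mathbb{H}$ are exactly what the Adams construction takes as the \emph{definition} of real, complex, and quaternionic type; the matching is obtained by complexifying and comparing $\Hom_{\mathbb{R}G}(U,U)\otimes_{\mathbb{R}}\mathbb{C}$ with $\Hom_{\mathbb{C}G}(cU,cU)$ through the decomposition of $cU$ into complex irreducibles.

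The fourth item is the new one, and the key reduction I would use is the following. A map $\phi\colon W\to W$ is right $\mathbb{H}$-linear and $G$-equivariant if and only if it is $\mathbb{C}$-linear and $G$-equivariant on the underlying complex representation $c'W$ \emph{and} commutes with the operator $R_{\mathrm{j}}$ of right multiplication by the quaternion $\mathrm{j}$; this is because $\mathbb{H}$ is generated over $\mathbb{C}$ by $\mathrm{j}$. A short check shows that $R_{\mathrm{j}}$ is $\mathbb{C}$-antilinear, $G$-equivariant, and satisfies $R_{\mathrm{j}}^2=-\id$, so $R_{\mathrm{j}}$ is a $G$-invariant \emph{quaternionic structure} on $c'W$. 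Hence $\Hom_{\mathbb{H}G}(W,W)$ is identified with the real subalgebra of those $\psi\in\Hom_{\mathbb{C}G}(c'W,c'W)$ that commute with $R_{\mathrm{j}}$, and the entire problem turns into a commutant computation inside $\Hom_{\mathbb{C}G}(c'W,c'W)$.

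I would then split according to the type of $W$, using the decomposition of $c'W$ coming from the $q$- and $c'$-constructions: $c'W$ is a complex irreducible of quaternionic type when $W$ is of quaternionic type; $c'W\cong V\oplus tV$ with $V\not\cong tV$ when $W$ is of complex type; and $c'W\cong V\oplus V$ (two equivalent copies, since $tV\cong V$) when $W$ is of real type. In the quaternionic-type case $\Hom_{\mathbb{C}G}(c'W,c'W)=\mathbb{C}$ and commuting with the antilinear $R_{\mathrm{j}}$ forces the scalar to be real, giving $\mathbb{R}$. In the complex-type case $\Hom_{\mathbb{C}G}(c'W,c'W)\cong\mathbb{C}\oplus\mathbb{C}$ (diagonal, as $V\not\cong tV$), and since $R_{\mathrm{j}}$ interchanges the two summands (an antilinear $G$-map $V\to V$ would lie in $\Hom_{\mathbb{C}G}(tV,V)=\{0\}$), the commutant collapses to the pairs $(\lambda,\lambda^*)$, a real algebra isomorphic to $\mathbb{C}$. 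In the real-type case $\Hom_{\mathbb{C}G}(c'W,c'W)\cong M_2(\mathbb{C})$ and the commutant of a quaternionic structure is exactly the matrices $\left(\begin{smallmatrix} a & b \\ -b^* & a^* \end{smallmatrix}\right)$, a copy of $\mathbb{H}$.

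The main obstacle I anticipate is this last case: carrying out the $M_2(\mathbb{C})$ commutant cleanly requires writing $R_{\mathrm{j}}$ as $(\sigma\otimes\id)\circ(\id\otimes\kappa)$ in terms of the real structure $\sigma$ of $V$ and an antilinear map $\kappa$ on the two-dimensional multiplicity space, then tracking antilinearity throughout so that the condition $R_{\mathrm{j}}^2=-\id$ pins $\kappa$ down up to conjugacy and produces the quaternion matrices. As a safeguard against bookkeeping errors, the Frobenius theorem from the first paragraph already guarantees \emph{a priori} that $\Hom_{\mathbb{H}G}(W,W)$ is one of $\mathbb{R}$, $\mathbb{C}$, $\mathbb{H}$, so it is enough to compute the real dimension of the commutant ($1$, $2$, or $4$) in each case, which is the least delicate route to the stated conclusion.
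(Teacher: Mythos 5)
Your proof is correct, and on the one genuinely new item~--- the quaternionic one~--- it takes a different route from the paper. The paper disposes of the first three items by citation (Adams, Lemma~3.22 and Corollary~3.23; Br\"{o}cker--tom Dieck, Section~II.6) and proves the fourth by complexifying the Hom space: it invokes the identity $c\Hom_{\mathbb{H}G}(W,W)=\Hom_{\mathbb{C}G}(c'W,c'W)$ (Adams, Section~3.9~(iii)), reads off $\dim_{\mathbb{C}}\Hom_{\mathbb{C}G}(c'W,c'W)=4$, $2$, $1$ from the same decompositions $c'W=V\oplus V$, $V\oplus tV$, $V$ that you use (Br\"{o}cker--tom Dieck, Proposition~II.6.6, together with bilinearity of $\Hom$ over direct sums), and then identifies the division algebra purely by its real dimension via the Frobenius classification. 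You instead realise $\Hom_{\mathbb{H}G}(W,W)$ concretely as the commutant of the antilinear operator $R_{\mathrm{j}}$ inside $\Hom_{\mathbb{C}G}(c'W,c'W)$ and compute that commutant case by case, which exhibits the isomorphisms explicitly~--- real scalars; pairs $(\lambda,\lambda^*)$; the quaternion matrices $\left(\begin{smallmatrix} a & b \\ -b^* & a^* \end{smallmatrix}\right)$~--- rather than inferring them from a dimension count. Your approach buys explicit real forms, and in fact it silently re-proves the Adams complexification identity: conjugation by $R_{\mathrm{j}}$ is an antilinear algebra involution of $\Hom_{\mathbb{C}G}(c'W,c'W)$ (well defined because $R^2_{\mathrm{j}}=-\id$ is central), and your commutant is exactly its fixed real form. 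The price is the $M_2(\mathbb{C})$ bookkeeping in the real-type case, which you correctly flag; the factorisation $R_{\mathrm{j}}=M\otimes\sigma$ with $M\overline{M}=-I$ does go through, and your checks in the other two cases (a scalar commuting with an antilinear map must be real; $R_{\mathrm{j}}$ must swap $V$ and $tV$ since an antilinear $G$-map $V\to V$ is a linear element of $\Hom_{\mathbb{C}G}(tV,V)=\{0\}$) are sound. Note finally that your ``safeguard''~--- computing only the real dimension of the commutant and appealing to Frobenius~--- is essentially the paper's actual proof, so either branch of your plan lands on established ground.
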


\begin{proof}
All items except the last one are proved in \cite[Lemma~3.22, Corollary~3.23]{MR0252560} and \cite[Section~II.6]{MR1410059}.

To prove the last item, note that $c\Hom_{\mathbb{H}G}(W,W)=\Hom_{\mathbb{C}G}(c'W,c'W)$ by \cite[Section~3.9 (iii)]{MR0252560}. If $W$ is of real type, then, by \cite[Proposition~II.6.6 (iv)]{MR1410059}, there exists an irreducible complex representation $V$ of real type such that $c'W=V\oplus V$. By \cite[Proposition~3.11]{MR0252560}, $\Hom$ is bilinear over the direct sum $\oplus$, therefore
\[
\Hom_{\mathbb{C}G}(c'W,c'W)=\Hom_{\mathbb{C}G}(V\oplus V,V\oplus V)=\mathbb{C}^4.
\]
If $W$ is of complex type, then, by \cite[Proposition~II.6.6 (v)]{MR1410059}, there exists an irreducible complex representation $V$ of complex type such that $c'W=V\oplus tV$. Therefore
\[
\Hom_{\mathbb{C}G}(c'W,c'W)=\Hom_{\mathbb{C}G}(V\oplus tV,V\oplus tV)=\mathbb{C}^2.
\]
If $W$ is of quaternionic type, then, by \cite[Proposition~II.6.6 (vi)]{MR1410059}, there exists an irreducible complex representation $V$ of quaternionic type such that $c'W=V$. Therefore
\[
\Hom_{\mathbb{C}G}(c'W,c'W)=\Hom_{\mathbb{C}G}(V,V)=\mathbb{C}^1.
\]
The statement about the isomorphism of linear spaces follows. The Frobenius classification of finite-dimensional real division algebras \cite{MR1581640} states that every such an algebra is isomorphic either to $\mathbb{R}$ or to $\mathbb{C}$ or to $\mathbb{H}$ and implies the statement about the isomorphism of division algebras.
\end{proof}

\subsubsection{The construction of a basis}\label{sub:basis}

Consider the $\mathbb{K}'$-linear space $\Hom_{\mathbb{K}H}(\res^G_HL,L_0)$ and assume that $L_0$ is irreducible. Denote by $D$ the division algebra $\Hom_{\mathbb{K}H}(L_0,L_0)$. The above space contains
\[
n_L=\frac{\dim_{\mathbb{K}'}\Hom_{\mathbb{K}H}(\res^G_HL,L_0)}{\dim_{\mathbb{K}'}D}
\]
copies of the representation $L_0$. Choose and fix the $\mathbb{K}$-linear subspaces $L_0^1$, \dots, $L^{n_L}_0$ of the linear space $\res^G_HL$ where the above copies act. This choice is not unique. However, we have
\[
\Hom_{\mathbb{K}H}(\res^G_HL,L_0)=\Hom_{\mathbb{K}H}(L^1_0,L_0)\oplus\cdots
\oplus\Hom_{\mathbb{K}H}(L^{n_L}_0,L_0).
\]
The isomorphism of division algebras between $D$ and $\Hom_{\mathbb{K}H}(L^i_0,L_0)$, $1\leq i\leq n_L$, described in Schur's Lemma, determines a representation of $D$ in $\Hom_{\mathbb{K}H}(L^i_0,L_0)$.

It is well-known that any finite-dimensional complex representation of the algebra $\mathbb{C}$ is a direct sum of finitely many copies of the unique irreducible representation $\mathbb{C}\to\Hom_{\mathbb{C}}(\mathbb{C}^1,\mathbb{C}^1)$. We choose it in the form $z\mapsto z$. A real representation of the algebra $\mathbb{R}$ (resp. $\mathbb{C}$, resp. $\mathbb{H}$) is a direct sum of finitely many copies of the unique irreducible representation $\mathbb{R}\to\Hom_{\mathbb{R}}(\mathbb{R}^1,\mathbb{R}^1)$ (resp. $\mathbb{C}\to\Hom_{\mathbb{R}}(\mathbb{R}^2,\mathbb{R}^2)$, resp. $\mathbb{H}\to\Hom_{\mathbb{R}}(\mathbb{R}^4,\mathbb{R}^4)$). We choose it in the form $a\mapsto a$ (resp.
\begin{equation}\label{eq:38}
z=a+b\mathrm{i}\mapsto\left(
\begin{smallmatrix}
  a & -b \\
  b & a
\end{smallmatrix}
\right)\in\Hom_{\mathbb{R}}(\mathbb{R}^2,\mathbb{R}^2),
\end{equation}
resp.
\begin{equation}\label{eq:39}
q=a+b\mathrm{i}+c\mathrm{j}+d\mathrm{k}\mapsto\left(
\begin{smallmatrix}
  a & -b & c & -d \\
  b & a & d & c  \\
  -c & -d & a & b \\
  d & -c & -b & a
\end{smallmatrix}
\right)\in\Hom_{\mathbb{R}}(\mathbb{R}^4,\mathbb{R}^4)).
\end{equation}
For proofs, see \cite{MR954184} and \cite{TRAUTMAN2006518}.

We proved the existence of a basis, in which the elements of the linear space $\Hom_{\mathbb{K}H}(\res^G_HL,L_0)$ are matrices over $\mathbb{K}'$ with $\dim_{\mathbb{K}'}L_0$ rows and $\dim_{\mathbb{K}'}L$ columns, in which only $n_L$ components are potentially non-zero. Each component is a square block-diagonal matrix and contains $\frac{\dim_{\mathbb{K}'}L_0}{\dim_{\mathbb{K}'}D}$ identical blocks. Each block is a square matrix with $\dim_{\mathbb{K}'}D$ rows. If $\mathbb{K}'=\mathbb{R}$ and $D=\mathbb{C}$, then each block has the form of the matrix in Equation~\eqref{eq:38}, if $\mathbb{K}'=\mathbb{R}$ and $D=\mathbb{H}$, then each block has the form of the matrix in Equation~\eqref{eq:39}.

Define the matrices $\{\,f^{ijk}\colon 1\leq i\leq n_L,1\leq j\leq\frac{\dim_{\mathbb{K}'}L_0}{\dim_{\mathbb{K}'}D},1\leq k\leq\dim_{\mathbb{K}'}D\,\}$ as follows. All the components of the matrix $f^{ijk}$ but the $i$th one are zero matrices. If $\dim_{\mathbb{K}'}D=1$, then each block contains the number $\frac{1}{\sqrt{\dim_{\mathbb{K}'}L_0}}$. If $\dim_{\mathbb{K}'}D=2$, then each block of the matrix $f^{ij1}$ has the form $\frac{1}{\sqrt{\dim_{\mathbb{K}'}L_0}}\left(
\begin{smallmatrix}
  1 & 0 \\
  0 & 1
\end{smallmatrix}
\right)$, and each block of the matrix $f^{ij2}$ the form $\frac{1}{\sqrt{\dim_{\mathbb{K}'}L_0}}\left(
\begin{smallmatrix}
  0 & -1 \\
  1 & 0
\end{smallmatrix}
\right)$. If $\dim_{\mathbb{K}'}D=4$, then each block of the matrix $f^{ij1}$ has the form $\frac{1}{\sqrt{\dim_{\mathbb{K}'}L_0}}\left(
\begin{smallmatrix}
  1 & 0 & 0 & 0 \\
  0 & 1 & 0 & 0  \\
  0 & 0 & 1 & 0 \\
  0 & 0 & 0 & 1
\end{smallmatrix}
\right)$, each block of the matrix $f^{ij2}$ the form $\frac{1}{\sqrt{\dim_{\mathbb{K}'}L_0}}\left(
\begin{smallmatrix}
  0 & -1 & 0 & 0 \\
  1 & 0 & 0 & 0  \\
  0 & 0 & 0 & 1 \\
  0 & 0 & -1 & 0
\end{smallmatrix}
\right)$, each block of the matrix $f^{ij3}$ the form $\frac{1}{\sqrt{\dim_{\mathbb{K}'}L_0}}\left(
\begin{smallmatrix}
  0 & 0 & 1 & 0 \\
  0 & 0 & 0 & 1  \\
  -1 & 0 & 0 & 0 \\
  0 & -1 & 0 & 0
\end{smallmatrix}
\right)$, and each block of the matrix $f^{ij4}$ the form $\frac{1}{\sqrt{\dim_{\mathbb{K}'}L_0}}\left(
\begin{smallmatrix}
  0 & 0 & 0 & -1 \\
  0 & 0 & 1 & 0  \\
  0 & -1 & 0 & 0 \\
  1 & 0 & 0 & 0
\end{smallmatrix}
\right)$. An orthonormal basis of the space $\Hom_{\mathbb{K}H}(\res^G_HL,L_0)$ is constructed.

Apply the Frobenius reciprocity isomorphism to the matrices $f^{ijk}$. According to the explicit construction of this isomorphism given in \cite[Chapter~III, Proposition~6.2]{MR1410059}, the matrix $f^{ijk}$ becomes the map $F^{ijk}\colon L\to C(G,L_0)$ that acts by $F^{ijk}(\mathbf{l})(g)=f^{ijk}(g^{-1}\cdot\mathbf{l})$ for $\mathbf{l}\in L$ and $g\in G$. The composition of the map \eqref{eq:40} and $F^{ijk}$ acts from $L$ to $L^2(E)$ and maps a vector $\mathbf{l}\in L$ to the cross-section $s^{ijk}_{\mathbf{l}}(gH)=g\cdot f^{ijk}(g^{-1}\cdot\mathbf{l})$ of the vector bundle $(E,G/H,\pi)$. In particular, for the vectors $\mathbf{e}_1$, \dots, $\mathbf{e}_{\dim L}$ of an orthonormal basis in $L$, their images
\[
{}_{L_0}Y_{ijkLm}(gH)=g\cdot f^{ijk}(g^{-1}\cdot\mathbf{e}_m)
\]
form an orthogonal basis in the isotypic subspace of the irreducible representation $L$ of the space $L^2(E)$. It is not necessarily orthonormal. By abuse of notation, we denote an orthonormal basis by the same symbol. The union of the above images over all representations $L\in\hat{G}_{\mathbb{K}}$ for which $\Hom_{\mathbb{K}G}(L,L^2(E))\neq\{0\}$, constitutes an orthonormal basis in the space $L^2(E)$.

If $L_0$ is reducible, then we decompose it into irreducible components and use \cite[Lemma~5.2.5]{MR0498996}, according to which the space $L^2(E)$ is the Hilbert direct sum of subspaces that correspond to each component, and construct an orthonormal basis in each subspace as above.

An alternative construction of the basis for the case of $\mathbb{K}=\mathbb{C}$, $G=\mathrm{SU}(2)$, and $H=\mathrm{U}(1)$ using the Euler angles, is given in \cite{MR825165}.

\providecommand{\noopsort}[1]{}

\printindex

\end{document}